\newtheorem{thm}{Theorem}
\newtheorem{prop}[thm]{Proposition}
\newtheorem{cor}[thm]{Corollary}
\theoremstyle{definition}
\newtheorem{defn}{Definition}
\newtheorem{ex}{Example}
\theoremstyle{remark}
\newtheorem{remark}{Remark}
\numberwithin{equation}{section}
\def\R{\mathbb{R}}
\def\Z{\mathbb{Z}}
\def\O{\mathrm{O}}
\begin{document}

%\title[]{Conformal geometry of timelike curves in the (2+1)-dimensional Einstein universe}
\title[]{Conformal geometry of timelike curves \\in the $(1+2)$-Einstein universe}

\author{Akhtam Dzhalilov}
\address{(A. Dzhalilov) Department of Natural and Mathematical Sciences , Turin Polytechnic
University in Tashkent, Niyazov Str. 17, 100095 Tashkent, Uzbekistan}
\email{a.dzhalilov@yahoo.com}

\author{Emilio Musso}
\address{(E. Musso) Dipartimento di Scienze Matematiche, Politecnico di Torino,
Corso Duca degli Abruz\-zi 24, I-10129 Torino, Italy}
\email{emilio.musso@polito.it}

\author{Lorenzo Nicolodi}
\address{(L. Nicolodi) Di\-par\-ti\-men\-to di Ma\-te\-ma\-ti\-ca e Informatica,
Uni\-ver\-si\-t\`a degli Studi di Parma, Parco Area delle Scienze 53/A, I-43124 Parma, Italy}
\email{lorenzo.nicolodi@unipr.it}

\thanks{Authors partially supported by
PRIN 2010-2011 ``Variet\`a reali e complesse: geometria, to\-po\-lo\-gia e analisi ar\-mo\-ni\-ca'';
FIRB 2008 ``Geometria Differenziale Complessa e Dinamica Olomorfa'';
the GNSAGA of INDAM; and by TWAS grant 14-121-RG/MATHS/AS-G UNESCOFR: 324028604}

\subjclass[2000]{53C50, 53A30}

%\date{Version of January 19, 2016}
%\date{Version of \today}

%\dedicatory{}

\keywords{Conformal Lorenztian geometry, timelike curves, closed timelike curves, conformal invariants, Einstein universe, conformal Lorentzian compactification, conformal strain functional}

\begin{abstract}
We study the conformal geometry of timelike curves in the $(1+2)$-Einstein universe,
the conformal compactification of Minkowski 3-space defined as the quotient
of the null cone of $\R^{2,3}$ by the action
by positive scalar multiplications.
The purpose is to describe local and global conformal invariants of timelike curves and
to address the question of existence and properties
of closed trajectories for the conformal strain functional.
Some relations between the conformal geometry of
timelike curves and the geometry of knots and links
%transversal knots
in the 3-sphere are discussed.
\end{abstract}

\maketitle

\section{Introduction}\label{s:intro}

Conformal Lorentzian geometry has played an important role in general relativity since the work
of H. Weyl \cite{W}. In the 1980s, it has been at the basis of the development of twistor approach
to gravity by Penrose and Rindler \cite{PR} and it is one of the main ingredients in the recently
proposed cyclic cosmological models in general relativity \cite{Pn1,Pn3, Tod}. It also plays a role
in the regularization of the Kepler problem \cite{GS3, Mar, Mi, Mos}, in conformal field theory
\cite{Scho}, and in Lie sphere geometry \cite{Blaschke, Ce}.
%
%
%These are some
%%of the reasons that motivated
%motivations for
%our interest in the conformal geometry of timelike curves. While the subject of conformal invariants of null curves has received some attention \cite{Blaschke, BCDG, Ur}, that of timelike curves seems not to have been much considered in the literature.
%%
%
For what concerns in particular the geometry of curves,
while the subject of conformal Lorentzian invariants of null curves has received some
attention \cite{Blaschke, BCDG, Ur}, that of timelike curves
%has been less studied.
seems to have been little studied before.
%seems to have been less studied.
%less considered in the literature.
%These are perhaps sufficient motivations
%%All this motivated
%for our interest in the conformal geometry of timelike curves.
%
%That is perhaps sufficient motivation for the study presented here.
%That motivated the present work.

\vskip0.1cm

In this paper we investigate the geometry of timelike curves in $\mathcal{E}^{1,2}$, the conformal compactification of Minkowski 3-space defined as the space of oriented null lines of $\R^{2,3}$ through the origin. This study is intended as a preliminary step to understand the 4-dimensional case, which is that of physical interest. Despite some formal similarities, there are substantial differences between the conformal Riemannian and Lorentzian case: the Lorentzian space $\mathcal{E}^{1,2}$ has the topology of $S^1\times S^2$, which is not simply connected; the global Lorentzian metrics on $\mathcal{E}^{1,2}$ are never maximally symmetric; the universal covering of the conformal group of $\mathcal{E}^{1,2}$ does not admit finite dimensional representations and has a center which is discrete, but not finite \cite{Ba, Raw}. Following \cite{BCDG,Fr}, we call $\mathcal{E}^{1,2}$ the $(1+2)$-Einstein universe.\footnote{Actually, $\mathcal{E}^{1,2}$ is the double covering of the space that in \cite{BCDG,Fr} is called Einstein universe.} A motivation for this terminology is that the universal covering of $\mathcal{E}^{1,2}$, $\R\times S^2$, endowed with the product metric $-dt^2 + g_{S^2}$, provides a static solution of Einstein's equation with a positive cosmological constant. This solution was proposed by Einstein himself as a model of a closed static universe in \cite{Einstein}. In addition, the pseudo-Riemannian geometries of the standard Friedmann--Lemaitre--Robertson--Walker cosmological models can be realized as subgeometries of $\mathcal{E}^{1,2}$ \cite{HE}.

\vskip0.1cm

The purposes of this paper are threefold. The first is to describe local and global conformal differential
invariants of a timelike curve. The second purpose is to address the question of existence and properties
of closed trajectories for the variational problem defined by the conformal strain functional,
the Lorentzian analogue of the conformal arclength functional in M\"obius geometry \cite{MMR, M1, M2, MN}.
The Lagrangian of the strain functional depends on third-order jets and shares many similarities with
the relativistic models for massless or massive particles based on higher-order action functionals, a topic
which has been much studied over the past twenty years \cite{FGL, GM, KP, MN1, MN2, NFS2, NMMK, P}.
The last purpose is to establish a connection between the conformal geometry of timelike curves
in the Einstein universe and the geometry of transversal knots in the unit 3-sphere.

From a physical point of view, the relevant objects are the lifts of timelike curves to the universal
covering of $\mathcal{E}^{1,2}$ and their global conformal invariants.
The compactified model has the advantage of having a matrix group as its restricted conformal group,
%that its restricted conformal group is a matrix group,
which simplifies the use of the geometric methods based on the transformation group and eases
the computational aspects.

%%%%%%%%%%%%%%%%%%%%%%
%\vskip0.1cm
%The purpose of this work is twofold: (1) to describe local and global conformal differential
%invariants of a timelike curve and
%%analyze
%(2) to address the question of existence and properties
%of closed trajectories for
%%the conformal strain functional
%the variational problem defined by the conformal strain functional,
%the Lorentzian analogue of the conformal arclength functional in M\"obius geometry \cite{MMR, M1, M2, MN}.
%The Lagrangian of the strain functional depends on the third-order jets and shares many similarities with the
%relativistic models for massless or massive particles based on higher-order functionals, a topic which has
%been much studied over the past twenty years
%\cite{FGL, GM, KP, MN1, MN2, NFS2, NMMK, P}. From a physical point of view, the relevant objects are
%the lifts of timelike curves to the universal covering of $\mathcal{E}^{1,2}$ and their global
%conformal invariants.
%The compactified model has the advantage
%of having a matrix group as its restricted conformal group,
%%that its restricted conformal group is a matrix group,
%which simplifies the use of the geometric methods based on the transformation group and eases
%the computational aspects.
%%%%%%%%%%%%%%%%%%%%%%%%%%

\vskip0.1cm
The paper is organized as follows.
In Section \ref{s:pre}, we collect some background material about
conformal Lorentzian geometry. For the geometry of the Einstein universe, we mainly follow \cite{BCDG}.

In Section \ref{s:TLcurves}, we study the conformal geometry of timelike curves in the Einstein universe.
We define the infinitesimal conformal strain, which is the Lorentzian analogue of the conformal arc
element of a curve in $S^3$ \cite{LO2010, Mon, M1, M2, MN}, and the notion of a conformal vertex.
An explicit description of curves all of whose points are vertices is given in Proposition 1.
Next, we define the concept of osculating conformal cycle and give a geometric characterization
of conformal vertices in terms of the analytic contact between the curve and its osculating cycle
(Proposition 2). We then prove the existence of a canonical conformal frame field along a generic
timelike curve (i.e., a timelike curve without vertices) and define the two conformal curvatures,
which are the main local conformal invariants of a generic curve (Theorem 3). As a byproduct,
some elementary consequences are derived (Propositions 4, 5 and 6 and Corollary 7).

In Section \ref{s:constant curvatures}, the canonical conformal frame is used to investigate generic
timelike curves with constant conformal curvatures. We exhibit explicit parameterizations of such
curves in terms of elementary functions (Theorems 8 and 9) and discuss their main geometric properties.
These are the Lorentzian counterparts of analogous results for curves with constant conformal
curvatures in $S^3$ \cite{SS}.

In Section \ref{s:strainfunctional}, we use the canonical frame to compute the Euler-Lagrange equations
of the conformal strain functional (Theorem \ref{t:EulerLagrange}). Consequently we show that the
conformal curvatures of the extrema can be expressed in terms of Jacobi's elliptic functions.
As a byproduct, we show that the conformal equivalence classes of critical curves depend on two real
constants and prove that there exist countably many distinct conformal equivalence classes of closed
trajectories for the strain functional (Theorem 11).

In Section \ref{s:curves-knots},
%we conclude the paper with some comments about
we establish a connection between the conformal geometry of timelike curves and the
geometry of transversal knots in the $S^3$
%This connection is realized
via the directrices of a generic timelike curve. These
are immersed curves in $S^3$, everywhere transverse to the canonical contact distribution, which
%
%the directrix curves of a generic timelike curve. These are immersed curves in $S^3$, everywhere
%transverse to the canonical contact distribution.
%
are built using the symplectic lift of the canonical conformal frame. If the directrices of a
generic closed timelike curve
are simple, then their linking and Bennequin numbers \cite{FT} provide three global conformal invariants,
different in general from the Maslov index of the curve.
These invariants are computed for the directrices
%of a closed homogeneous curve of $\mathcal{E}^{1,2}$
of a special class of closed timelike curves with constant conformal curvatures
(Proposition 12).
It is still an open question how the local symplectic invariants \cite{AD,CW} of the directrices can be
related to the strain and the conformal curvatures.
%
%It would be interesting
%to analyze how the local symplectic invariants \cite{AD,CW} of the directrices are related to the strain
%and the conformal curvatures.
%
A more difficult problem is to understand
%would be to investigate more thoroughly
how the classical and non-classical invariants of transversal knots of $S^3$ \cite{Et, Et2, EtHo, FT}
are related to the conformal geometry of closed timelike curves.

%\section{Preliminaries}\label{s:EU}
\section{Conformal Lorentzian geometry}\label{s:pre}

\subsection{The automorphism group $\mathrm{A}^{\uparrow}_+(2,3)$}

Consider $\R^5$ with a nondegenerate scalar product $\langle \cdot,\cdot\rangle$ of signature $(2,3)$ and
a volume form $dV\in \Lambda^5(\R^5)$. Let $\mathcal{N}^2$ denote the cone of all isotropic
bivectors, i.e., the non-zero decomposable bivectors $X\wedge Y\in \Lambda_2(\R^5)$, such that
$\langle X,X\rangle = \langle Y,Y\rangle = \langle X,Y\rangle = 0$.
Choose an oriented spacelike
3-dimensional vector subspace $\mathbb{V}^3_+\subset \R^5$ and a positive-oriented orthogonal
basis $V_1,V_2,V_3$ of $\mathbb{V}^3_+$. We call an isotropic bivector $X\wedge Y$
{\it future directed} if $dV(X,Y,V_1,V_2,V_3)>0$ and denote by $\mathcal{N}^2_+$ the half-cone of all
future-directed isotropic bivectors. Let $\R^{2,3}$ denote $\R^5$
with the scalar product $\langle \cdot,\cdot\rangle$, the orientation induced by $dV$, and the
time-orientation determined by the half-cone $\mathcal{N}^2_+$.
The 10-dimensional Lie group $\mathrm{A}^{\uparrow}_+(2,3)$ of linear isometries of $\R^{2,3}$
preserving the given orientation and
time-orientation is referred to as the {\it automorphism group} of $\R^{2,3}$.
Its Lie algebra is the vector space
\[
 \mathfrak{a}(2,3)=\{f\in \mathrm{End}(\R^{2,3})\,:\, \langle f(X),Y\rangle
  + \langle X,f(Y)\rangle = 0, \, \forall \,X,Y\in \R^{2,3}\},
  \]
equipped with the commutator as a Lie bracket. Given a basis $\mathcal{B}=(B_0,B_1,B_2,B_3,B_4)$ of $\R^{2,3}$
and an endomorphism $f\in \mathrm{End}(\R^{2,3})$,
%we use the notation
let $M_{\mathcal{B}}(f)$ be the $5\times 5$ matrix representing $f$ with respect to $\mathcal{B}$. Similarly, let $G_{\mathcal{B}}$ be the symmetric matrix representing the scalar product $\langle \cdot,\cdot \rangle$ with respect to $\mathcal{B}$. For every choice of $\mathcal{B}$, the map $\chi_{\mathcal{B}}:F\in \mathrm{A}^{\uparrow}_+(2,3)\mapsto M_{\mathcal{B}}(F)\in \mathrm{GL}(5,\R)$ is a faithful matrix representation of $\mathrm{A}^{\uparrow}_+(2,3)$. We say that:
\begin{itemize}

\item $\mathcal{B}$ is a {\it M\"obius basis} if $\mathcal{B}$ is positive-oriented,
$G_{\mathcal{B}}$ $=$
$-(E^0_4+E^4_0)-E^1_1+E^2_2+E^3_3 =: \mathtt m$, $\mathtt m =(\mathtt m_{ij})$, and the isotropic bivector $B_0\wedge (B_1+B_2)$
is future-oriented;\footnote{$E^a_b$, $0\leq a,b\leq 4$, denotes the elementary matrix with 1
in the $(a,b)$ place and $0$ elsewhere.}

\item $\mathcal{B}$ is a {\it Poincar\'e basis} if $\mathcal{B}$ is positive-oriented,
$G_{\mathcal{B}}=-E^0_0-E^1_1+E^2_2+E^3_3+E^4_4$, and the isotropic bivector $(B_0-B_4)\wedge(B_1+B_2)$ is future-oriented;

\item $\mathcal{B}$ is a {\it Lie basis} if $\mathcal{B}$ is positive-oriented,
$G_{\mathcal{B}}=-(E^0_4+E^4_0)-(E^1_3+E^3_1)+E^2_2$, and the isotropic bivector $B_0\wedge B_1$ is future-oriented.
\end{itemize}

We choose and fix a reference M\"obius basis $\mathbf{M}^{o}=(M^{o}_0,\dots,M^{o}_4)$ of $\R^{2,3}$,
referred to as the {\it standard M\"obius basis}. The image of $\mathrm{A}^{\uparrow}_+(2,3)$ under the faithful representation determined by $\mathbf{M}^{o}$ is a connected closed subgroup of $\mathrm{SL}(5,\R)$, denoted by $M^{\uparrow}_+(2,3)$. Let
\begin{equation}\label{itertwining}
 \begin{cases}
  T_\mathrm{mp}=\frac{1}{\sqrt{2}}(E^0_0+E^4_0)+E^1_1+E^2_2+E^3_3+\frac{1}{\sqrt{2}}(E^4_4-E^0_4),\\
  T_\mathrm{m \ell}=E^0_0-\frac{1}{\sqrt{2}}(E^1_1 +E^1_3)+E^2_2+\frac{1}{\sqrt{2}}(E^3_1-E^3_3)+E^4_4.
   \end{cases}
\end{equation}
Then, $\mathcal{B}$ is a M\"obius basis if and only if $\mathcal{B}\cdot T_\mathrm{mp}$ is a Poincar\'e basis
if and only if $\mathcal{B}\cdot T_\mathrm{m\ell}$ is a Lie basis; in particular,
$\mathbf{P}^{o}= (P^o_0,\dots,P^o_4)= \mathbf{M}^{o}\cdot T_\mathrm{mp}$
is referred to as the {\it standard Poincar\'e basis} and
$\mathbf{L}^{o}=(L^o_0,\dots,L^o_4) = \mathbf{M}^{o}\cdot T_\mathrm{m\ell}$
the {\it standard Lie basis} of $\R^{2,3}$. Differentiating the $\R^{2,3}$-valued maps
 $\mathcal{M}_j:F\in \mathrm{A}^{\uparrow}_+\mapsto F (M^o_j)\in \R^{2,3}$, $j=0,\dots,4$, yields $d\mathcal{M}_j=\sum_i\mu_j^i \mathcal{M}_i$, where $\mu^i_j$ are left-invariant 1-forms.
The conditions $\langle \mathcal{M}_j,\mathcal{M}_i\rangle= \mathtt{m}_{ji}$ imply that $\mu =(\mu^i_j)$
takes values in the Lie algebra
$\mathfrak{m}(2,3)$ of $M^{\uparrow}_+(2,3)$.
%
%whose elements are matrices of the form
%$X =
%\left(\begin{smallmatrix}
%x^0_0 & -x^1_4 & x^2_4 & x^3_4 & 0\\
%x^1_0 & 0 & x^2_1 & x^3_1 & x^1_4\\
% x^2_0 & x^2_1 & 0 & -x^3_2 & x^2_4\\
%x^3_0 & x^3_1 & x^3_2 & 0 & x^3_4\\
%0 & -x^1_0 & x^2_0 & x^3_0 & -x^0_0\\
% \end{smallmatrix}\right)$.
%
Choosing
\[
 \begin{array}{lllll}
 M^0_0=E^0_0-E^4_4,\! & M^0_1=E^0_1-E^1_4,\! & M^0_2=E^0_2+E^2_4,\! &  M^0_3=E^0_3+E^3_4,\! &  M^1_2=E^1_2+E^2_1,\\
  M^1_3=E^1_3+E^3_1,\! &M^2_3=E^2_3-E^3_2,\! &  M^4_1=E^4_1-E^1_0,\! &  M^4_2=E^4_2+E^2_0,\! & M^4_3=E^4_3+E^3_0\\
   \end{array}
   \]
as a basis of $\mathfrak{m}(2,3)$, we can write
\[
\begin{split}
 \mu&=\mu^0_0M^0_0+\mu^1_0M^0_1+\mu^2_0M^0_2+\mu^3_0M^0_3+\mu^2_1M^1_2\\
 &\quad +\mu^3_1M^1_3+\mu^3_2M^2_3+
   \mu^1_4M^4_1+\mu^2_4M^4_2+\mu^3_4M^4_3,
   \end{split}
   \]
where the left-invariant 1-forms $\mu^0_0$, $\mu^1_0$, $\mu^2_0$, $\mu^3_0$, $\mu^2_1$, $\mu^3_1$, $\mu^3_2$, $\mu^1_4$, $\mu^2_4$, $\mu^3_4$
are linearly independent and span $\mathfrak{a}(2,3)^*$, the dual of the Lie algebra $\mathfrak{a}(2,3)$.
If $F\in \mathrm{A}^{\uparrow}_+(2,3)$, the vectors $\mathcal{M}_0(F),\dots,\mathcal{M}_4(F)$ constitute
a M\"obius basis of $\R^{2,3}$, whose dual basis is denoted by  $\mathcal{M}^0(F),\dots,\mathcal{M}^4(F)$.
The Maurer-Cartan form of $\mathrm{A}^{\uparrow}_+(2,3)$ can be written as $\mu = \sum_{i,j}\mu^i_j\mathcal{M}^j\otimes \mathcal{M}_i$.
It satisfies the Maurer-Cartan equations $d\mu = -\frac{1}{2}[\mu,\mu]$.

\begin{figure}[ht]
\begin{center}
\includegraphics[height=6.2cm,width=6.2cm]{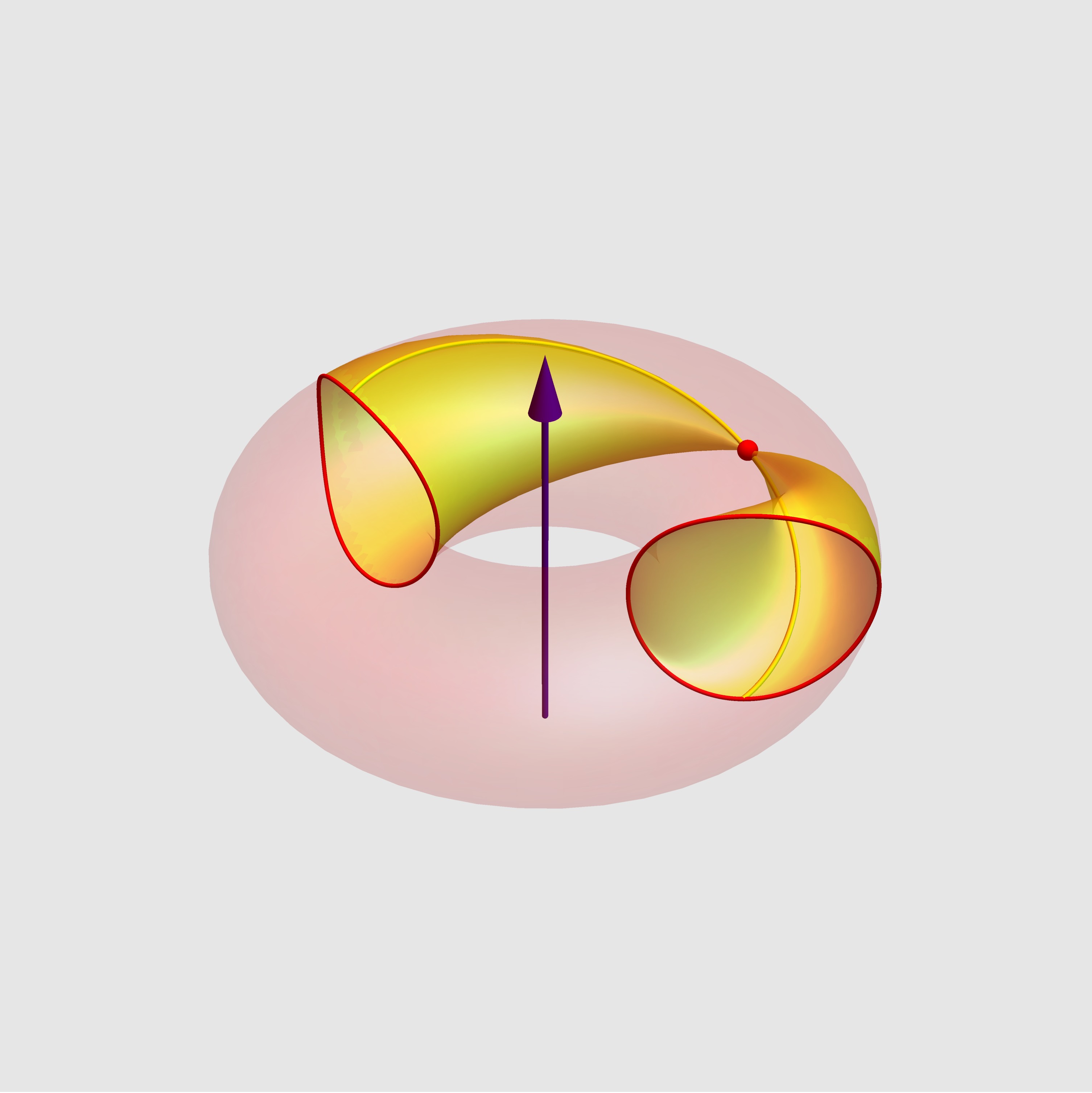}
\includegraphics[height=6.2cm,width=6.2cm]{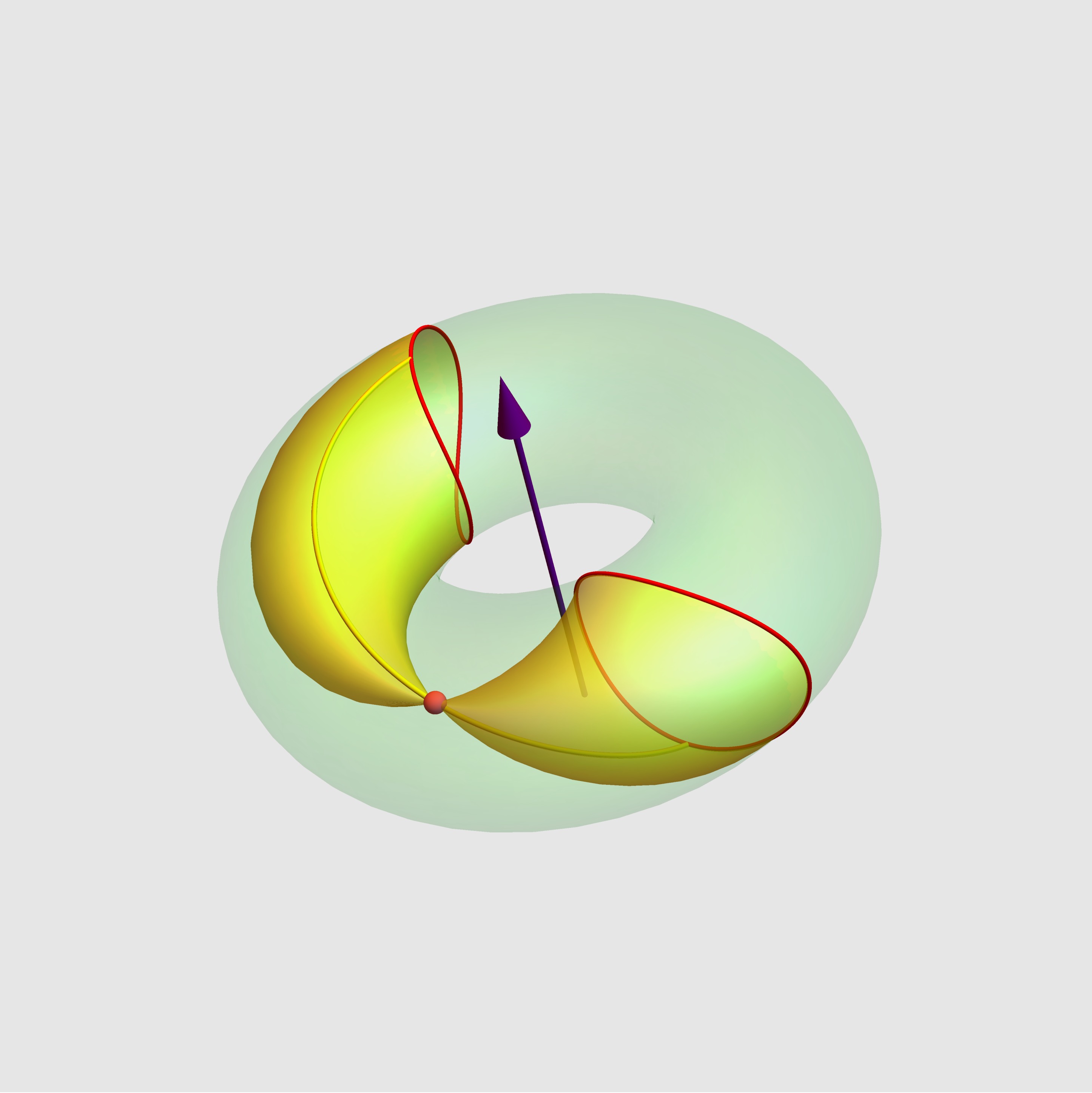}
\caption{The two adS chambers and a null light-cone. The Einstein universe is obtained by identifying the two boundaries. The wall, i.e., the surface given by the identification of the two boundaries, is a totally
umbilical torus of signature $(1,1)$.}\label{FIG1}
\end{center}
\end{figure}

\subsection{The $(1+2)$-Einstein universe and its conformal group}

Consider the orthogonal direct sum decomposition $\R^{2,3}=\mathbb{V}^2_-\oplus \mathbb{V}_+^3$
into the oriented, negative definite 2-dimensional subspace
$\mathbb{V}_-^2 =[P^o_0\wedge P^o_1]$ and the oriented 3-dimensional spacelike subspace
$\mathbb{V}_+^3=[P^o_2\wedge P^o_3\wedge P^o_4]$.
Let $(x_0,\dots,x_4)$ be the Cartesian coordinates of the standard Poincar\'e basis and denote
by $\mathcal{E}^{1,2}$ the 3-dimensional submanifold of $\R^{2,3}$ defined by the equations $x_0^2+x_1^2=1$ and $x_2^2+x_3^2+x_4^2=1$. As a manifold, $\mathcal{E}^{1,2}$ is the Cartesian product $S^1_-\times S^2_+$ of the unit circle of $\mathbb{V}^2_-$ with the 2-dimensional unit sphere of $\mathbb{V}^3_+$. The scalar product on $\R^{2,3}$ induces a Lorentzian pseudo-metric $g_{\textsl{e}}$ on $\mathcal{E}^{1,2}$. The normal bundle of $\mathcal{E}^{1,2}$ is spanned by the restrictions of the vector fields $\mathbf{n}_1=x_0\partial_{x_0}+x_1\partial_{x_1}$ and $\mathbf{n}_2=x_2\partial_{x_2}+x_3\partial_{x_3}+x_4\partial_{x_4}$. Thus, contracting $dV$ with $\mathbf{n}_1$ and $\mathbf{n}_2$, we get a volume form on $\mathcal{E}^{1,2}$ which in turn defines an orientation on  $\mathcal{E}^{1,2}$.
The vector field $-x_1\partial_{x_0}+x_0\partial_{x_1}$ is tangent to $\mathcal{E}^{1,2}$ and induces a nowhere vanishing timelike vector field $\mathbf{t}$ on $\mathcal{E}^{1,2}$.
%
%We assign to $\mathcal{E}^{1,2}$ the time-orientation defined by the requirement that $\mathbf{t}$
%is future-oriented.
%
We time-orient $\mathcal{E}^{1,2}$ by requiring that $\mathbf{t}$ is future-oriented.

\begin{defn}
The Lorentzian manifold $(\mathcal{E}^{1,2},g_{\textsl{e}})$, with the above specified orientation
and time-orientation,
%which have been defined just above,
is called the {\it $(1+2)$-Einstein universe}. The Einstein universe is a homogeneous Lorentzian manifold and its restricted isometry group is a 4-dimensional maximal compact subgroup of $\mathrm{A}^{\uparrow}_+(2,3)$, isomorphic to $SO(2)\times SO(3)$.
\end{defn}

For each non-zero vector $X\in \R^{2,3}$, we denote by $[X]$ the oriented line spanned by $X$ (i.e., the ray of $X$).
The set of all null rays, denoted by $\mathcal{M}^{1,2}$ is a manifold and the map
\begin{equation}\label{EUI1}
 \Phi :  \mathcal{E}^{1,2} \ni X \mapsto [X]\in \mathcal{M}^{1,2}
  \end{equation}
is a diffeomorphism. This allows us to identify $\mathcal{E}^{1,2}$ with $\mathcal{M}^{1,2}$ and
to transfer to $\mathcal{M}^{1,2}$ the oriented, time-oriented conformal Lorentzian structure
of $\mathcal{E}^{1,2}$.
%
%We will switch interchangeably between these two descriptions
%and we will not make any difference between the two spaces.
%
We will make no distinction between the two models and
the context will make clear which of them is being used.
Using the above identification, the automorphism group $\mathrm{A}^{\uparrow}_+(2,3)$ acts effectively and transitively on the left of $\mathcal{E}^{1,2}\cong \mathcal{M}^{1,2}$ by $F\cdot X =[F(X)]$, for each $F\in \mathrm{A}^{\uparrow}_+(2,3)$ and $[X]\in \mathcal{M}^{1,2}$. The action preserves the oriented, time-oriented conformal Lorentzian structure of $\mathcal{E}^{1,2}$.
It is a classical result that every restricted conformal transformation of the Einstein universe is induced by a unique element of $\mathrm{A}^{\uparrow}_+(2,3)$ \cite{DNF,Fr}. For this reason, we call $\mathrm{A}^{\uparrow}_+(2,3)$ the (restricted) {\it conformal group of the Einstein universe}. The map
\[
 \tau : \mathcal{E}^{1,2}\ni X \mapsto (x_0 x_2-x_1(x_3+2),x_1x_2+x_0(x_3+2),x_4)\in \R^3
  \]
is a 2:1 branched covering onto the toroid $\mathrm{T}\subset \R^3$ swept out by the rotation around the $z$-axis of the unit disk in the $xz$-plane centered at $(2,0,0)$. Thus, the Einstein universe can be identified with the quotient space $\mathrm{T}\sqcup \mathrm{T}/\sim$ of the disjoint union
$\mathrm{T}\sqcup \mathrm{T}=\mathrm{T}\times \{-1,1\}$ of two copies of the toroid $\mathrm{T}$ modulo
the equivalence relation defined by $[(P,\epsilon)]_{\sim}=\{(P,\epsilon)\}$, if $P\in \mathrm{Int}(\mathrm{T})$, and $[(P,\epsilon)]_{\sim}=\{(P,\pm \epsilon)\}$, if $P\in \partial \mathrm{T}$.
In what follows we will use the ``toroidal'' projection $\tau$ to visualize and clarify the geometrical
content of the results.

\subsection{Conformal embeddings of Lorentzian space forms}

As a model for {\em anti-de Sitter 3-space}, we
consider the hyperquadric of $\R^4$
\[
 \mathbb{M}^{(1,2)}_{-1}=\{(x,y)\in \R^{4} \,:\, -(x^1)^2-(x^2)^2+(y^1)^2+(y^2)^2=-1\}
  \]
equipped with the Lorentzian structure induced by the neutral scalar product
\[
  (\mathbf{x},\widetilde{\mathbf{x}})_{(2,2)}=-x^1 \widetilde{x}^1-x^1 \widetilde{x}^1+y^1 \widetilde{y}^1+y^2 \widetilde{y}^2.
  \]
Then, $\mathbb{M}^{(1,2)}_{-1}$ can be embedded in the Einstein universe by the conformal map
\[
 \mathbf{j}_{adS}:(x,y)
 %\in M^{(1,2)}_{-1}
 \mapsto \frac{1}{\sqrt{(x^1)^2+(x^2)^2}}\left(x^1P^o_0+x^2P^o_1+P^o_2+ y^1P^o_3+y^2P^o_4\right).
 %\in \mathcal{E}^{1,2}.
  \]
The image is the open subset $\mathcal{T}_{+}=\{X\in \mathcal{E}^{1,2} \,:\, \langle X,P^o_2 \rangle  >0\}$,
the {\it positive adS-chamber}. The boundary of $\mathcal{T}_{+}$ is the {\it adS-wall}, i.e.,
the timelike embedded torus
\[
   \mathcal{T}_{\infty}=\{X\in \mathcal{E}^{1,2} \,:\, \langle X,P^o_2 \rangle =0\}\subset \mathcal{E}^{1,2}.
   \]
The complement of $\mathcal{T}_{+}\cup \mathcal{T}_{\infty}$ is the {\it negative adS-chamber} $\mathcal{T}_{-}$, another copy of the anti-de Sitter space inside $\mathcal{E}^{1,2}$. The restriction of the branched covering $\tau$ to each of the two adS-chambers is a smooth diffeomorhism onto the interior of $\mathrm{T}$, while the restriction of $\tau$ to the adS-wall is a diffeomorphism onto $\partial \mathrm{T}$ (see Figure \ref{FIG1}).

\begin{figure}[ht]
\begin{center}
\includegraphics[height=6.2cm,width=6.2cm]{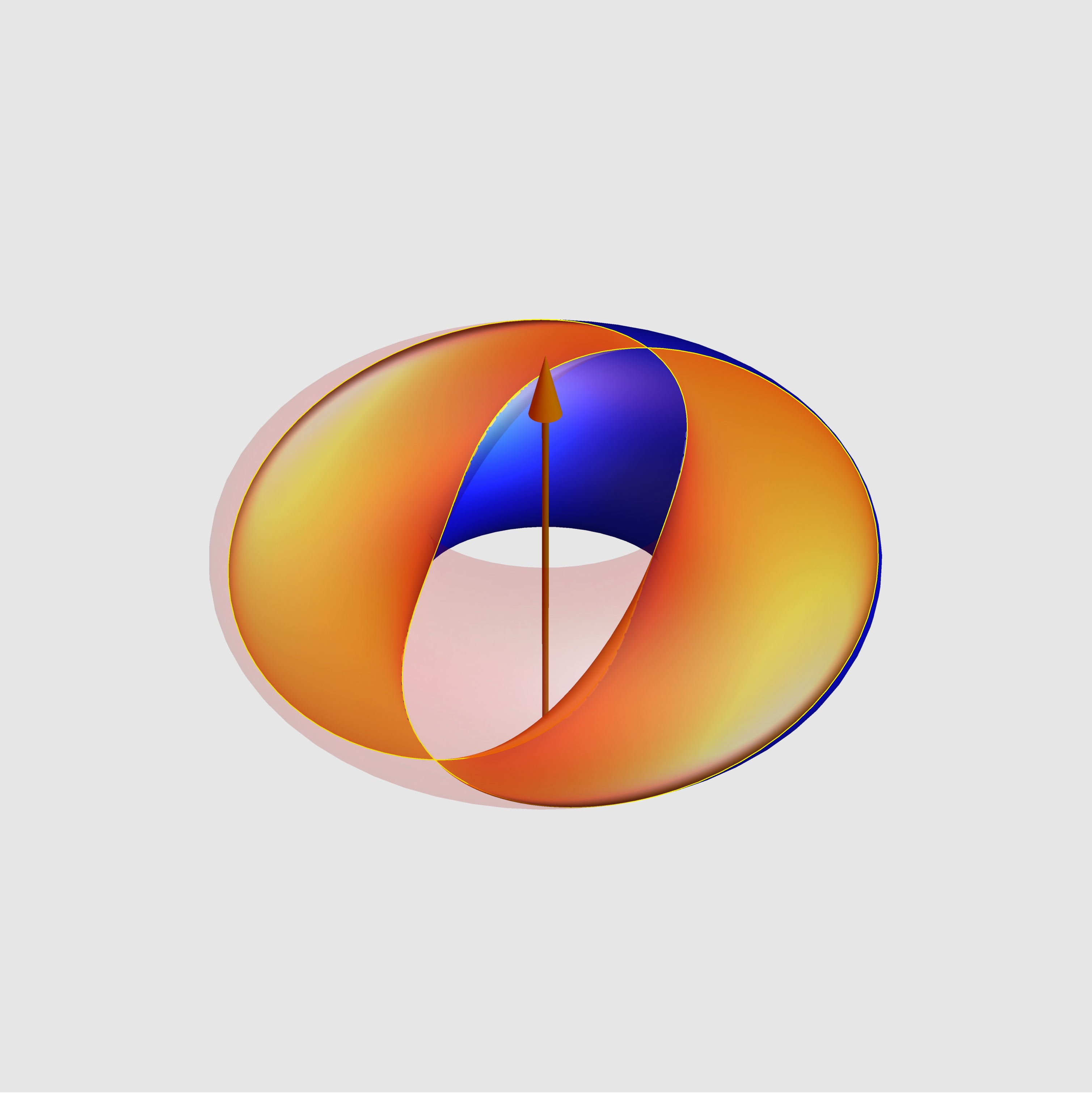}
\includegraphics[height=6.2cm,width=6.2cm]{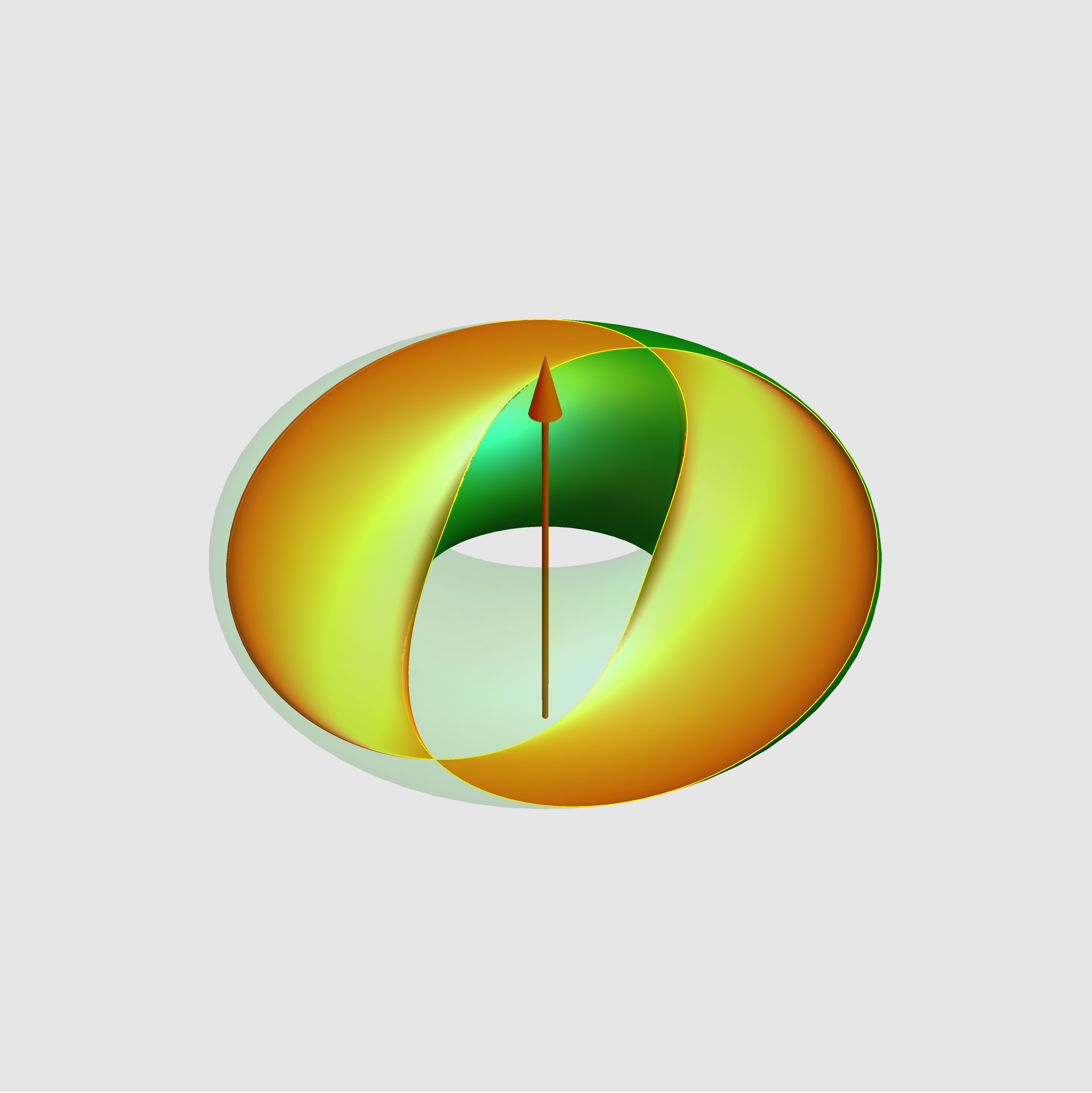}
\caption{The Minkowski chamber. Its boundary is a totally umbilical null cone of the Einstein universe. The two surfaces colored in blue and green are identified.}\label{FIG2}
\end{center}
\end{figure}

\noindent Let $\mathbb{M}^{(1,2)}_0$ be {\em Minkowski 3-space}, i.e., the affine space $\R^3$ with the Lorentzian scalar product
\[
  (\mathbf{x},\widetilde{\mathbf{x}})_{(1,2)}=-x^1\widetilde{x}^1+x^2\widetilde{x}^2+x^3\widetilde{x}^3.
   \]
For each point $\mathbf{x} = {}^t\!(x_1,x_2,x_3)\in \mathbb{M}^{(1,2)}_0$, let
$^*\mathbf{x} = (-x_1,x_2,x_3)$. The map
\[
\mathbf{j}_m(\mathbf{x})= \Phi ^{-1}\left(\left[M^o_0+x_1M^o_1+x_2 M^o_2+x_3 M^o_3 + \frac{{}^*\!\mathbf{x} \mathbf{x}}{2}M^o_4\right]\right)
\]
is a conformal embedding whose image, the {\it positive Minkowski-chamber}, is the open subset $\mathcal{M}_{+}=\{X\in \mathcal{E}^{1,2} \,:\, -\langle X,P^o_0 \rangle  >\langle X,P^o_4 \rangle \}$. Its boundary, the {\it Minkowski-wall}, is the light cone $\mathcal{M}_{\infty}=\{X\in \mathcal{E}^{1,2} \,:\, -\langle X,P^o_0 \rangle =\langle X,P^o_4 \rangle\}$. The complement of $\mathcal{M}_{+}\cup \mathcal{M}_{\infty}$ is the {\it negative Minkowski-chamber}, another copy of Minkowski space inside $\mathcal{E}^{1,2}$ (see Figure \ref{FIG2}).

\noindent Next, consider {\em de Sitter 3-space}, that is, the quadric
\[
 \mathbb{M}^{(1,2)}_1=\left\{ {}^t\!(w^1,w^2,w^3,w^4)\in \R^{1,3}\,:\,-(w^1)^2+(w^2)^2+(w^3)^2+(w^4)^2=1\right \}\subset \R^{4}
  \]
equipped with the Lorentzian structure induced by the scalar product
\[
 (\mathbf{w},\widetilde{\mathbf{w}})_{(1,3)} = -w^1\widetilde{w}^1+w^2\widetilde{w}^2+w^3\widetilde{w}^3+w^4\widetilde{w}^4.
  \]
Then, $\mathbb{M}^{(1,2)}_1$ is mapped into $\mathcal{M}^{1,2}$ by the conformal embedding
\[
  \mathbf{j}_{dS}:{}^t\!(w^1,w^2,w^3,w^4)
  %\in \mathbb{M}^{(1,2)}_1
  \mapsto \frac{1}{\sqrt{1+(w^1)^2}}( P^o_0+w^1P^o_1+w^2P^o_2+w^3P^o_3+w^4P^o_4).
  %\in \mathcal{E}^{1,2}.
   \]
The image of $\mathbf{j}_{dS}$ is the {\it positive dS-chamber}
$\mathcal{S}_{+}=\{X= \sum_{j=0}^4 x^jP^o_j\in \mathcal{E}^{1,2} \,:\, x^0>0\}$. Its boundary is the disjoint union of two spacelike surfaces $\mathcal{S}^{\pm}_{\infty}=\{X=\sum_{j=0}^4 x^jP^o_j\in \mathcal{E}^{1,2} \,:\, x^0=0,\, x^1=\pm 1\}$,
the {\it dS-walls} of $\mathcal{E}^{1,2}$. Each wall is a totally umbilical 2-dimensional spacelike sphere embedded in $\mathcal{E}^{1,2}$. The complement of $\mathcal{S}_{+}\cup \mathcal{S}^+_{\infty}\cup \mathcal{S}^-_{\infty}$ is the {\it negative dS-chamber} $\mathcal{S}_{-}=\{X= \sum_{j=0}^4 x^jP^o_j\in \mathcal{E}^{1,2}\, :\, x^0<0\}$, which is another copy of de Sitter space inside the Einstein universe (see Figure \ref{FIG3}).

\begin{figure}[ht]
\begin{center}
\includegraphics[height=6.2cm,width=6.2cm]{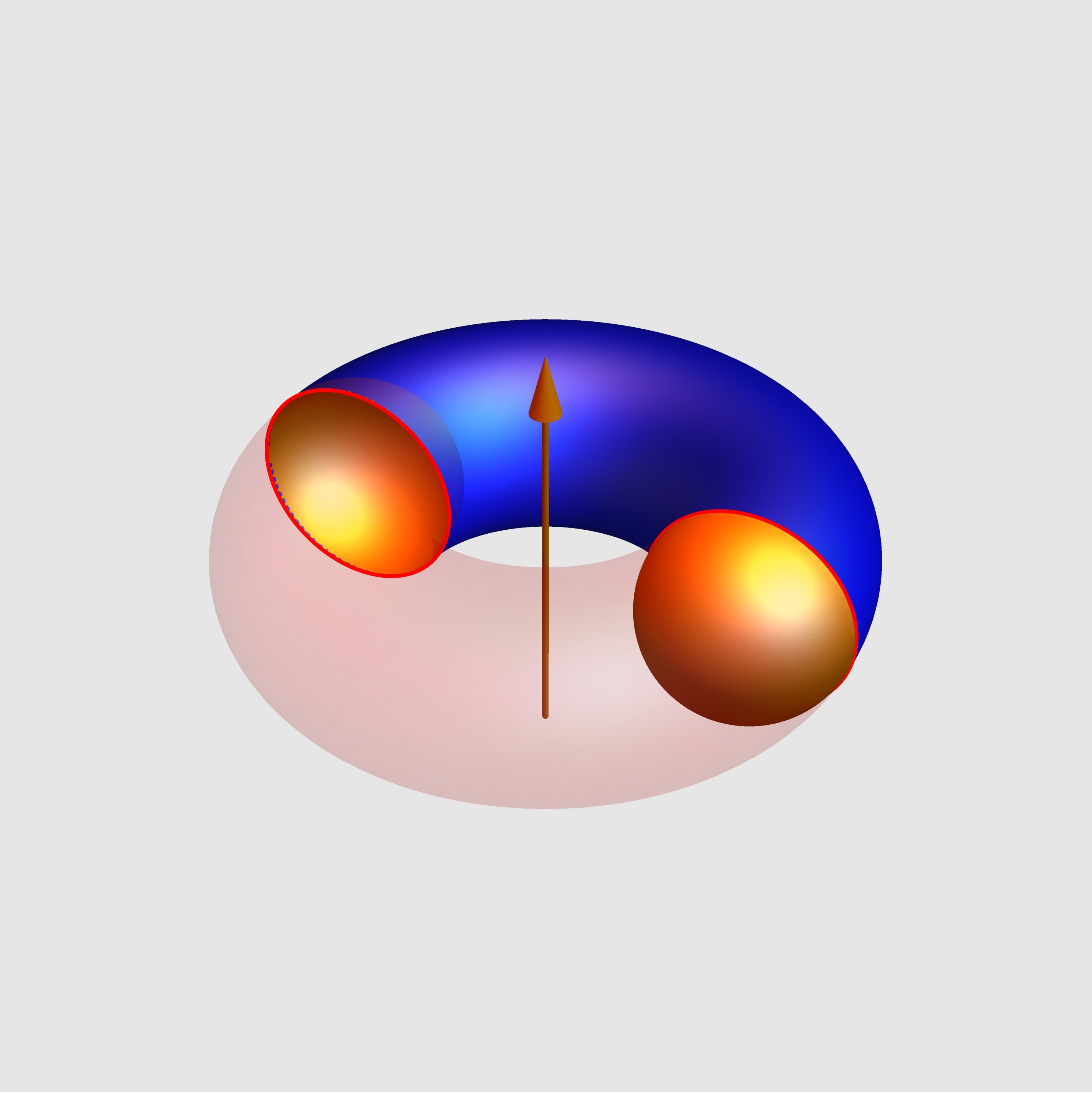}
\includegraphics[height=6.2cm,width=6.2cm]{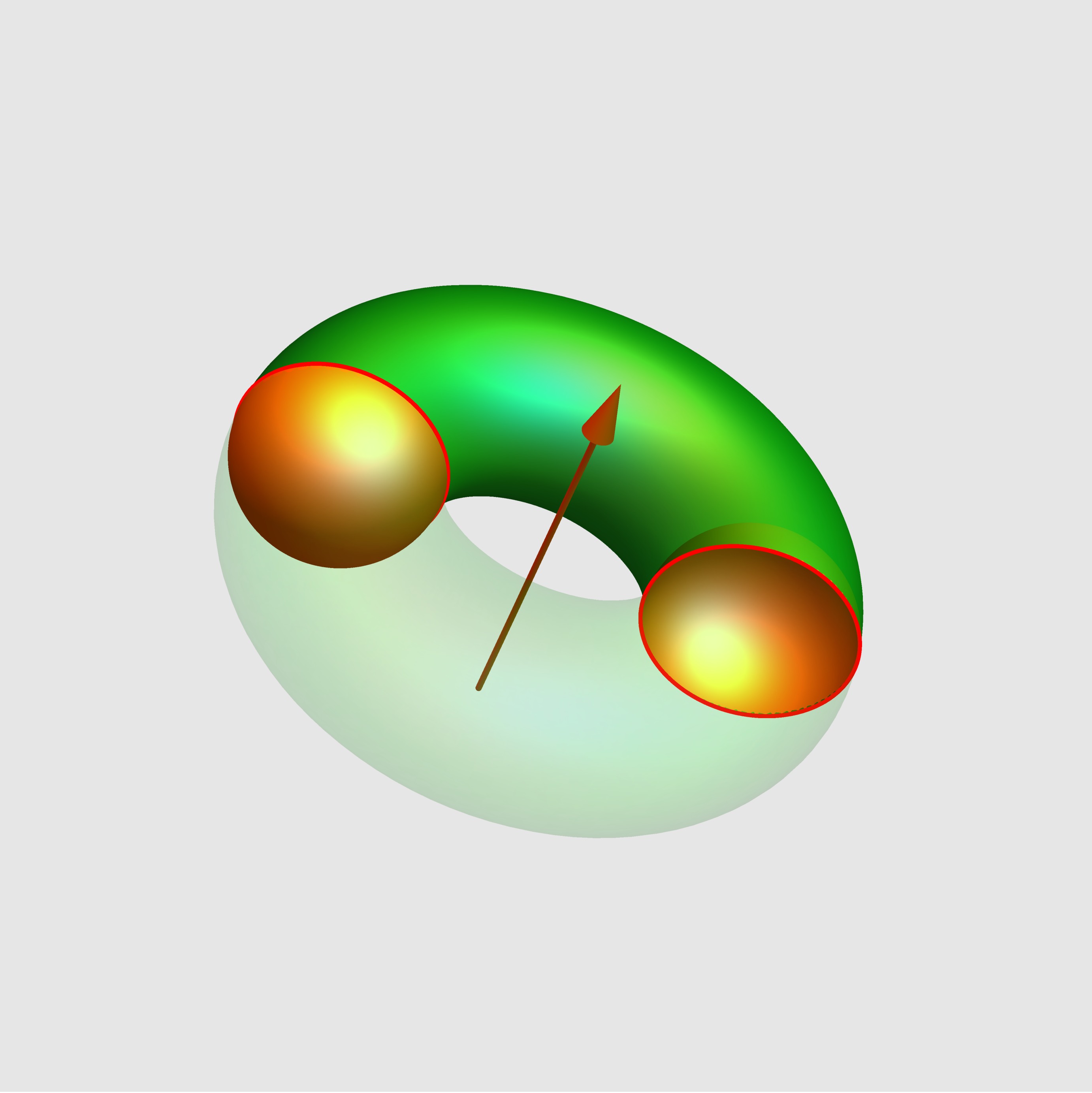}
\caption{The de Sitter chamber. The boundary consists of two disjoint spacelike totally umbilical spheres of the Einstein universe. The surfaces colored in blue and green are identified.}\label{FIG3}
\end{center}
\end{figure}

\section{Conformal geometry of timelike curves}\label{s:TLcurves}

Let $\gamma : {I}\subset \R\to \mathcal{E}^{1,2}$ be a parametrization of a smooth curve of
the Einstein universe. We write $\gamma=\eta+\beta$, where
$\eta : {I}\to S^1_-\subset \mathbb{V}^2_- = [P^o_0\wedge P^o_1]$ and $\beta : {I}\to S^2_+\subset \mathbb{V}^3_+=[P^o_2\wedge P^o_3 \wedge P^o_4]$ are smooth curves, referred to as the {\it time-} and
{\it space-component} of $\gamma$, respectively. In particular, $\gamma$ is timelike if and only if $- \langle \eta',\eta'\rangle > \langle \beta',\beta'\rangle \ge 0$ and is future-directed if and only if $\eta'=\varrho_{\eta}j\eta$, where $\varrho_{\eta}$ is a strictly positive function and $j:\mathbb{V}_-\to \mathbb{V}_-$ is the counterclockwise rotation of an angle $\pi/2$ in the oriented, negative definite Euclidean plane $\mathbb{V}^2_-$. Henceforth, we only consider future-directed timelike curves.

\begin{defn}\label{def:maslov}
The {\it Maslov index} of a closed timelike curve is the degree is its time-component.
\end{defn}

\begin{defn}
If $\gamma:I\to \mathcal{E}^{1,2}$ is a timelike curve, then $\gamma|_t,\gamma'|_t,\gamma''|_t$ span an oriented 3-dimensional vector subspace of $\R^{2,3}$ of signature $(-,-,+)$. Such a subspace, denoted by $\mathcal{T}|_t$, is called the {\it conformal osculating space} of $\gamma$ at $\gamma(t)$. Its orthogonal complement $\mathcal{N}|_t$ is the {\it normal conformal space} of $\gamma$ at $\gamma(t)$. The orthogonal projection onto the normal spaces is denoted by $\pi_{\mathcal{N}}|_t$. The continuous function
\begin{equation}\label{strain-density}
 \upsilon_{\gamma}=\sqrt[4]{\frac{\langle \pi_{\mathcal{N}}(\gamma'''),
  \pi_{\mathcal{N}}(\gamma''')\rangle}{|\langle \gamma',\gamma'\rangle|}}
  \end{equation}
is called the {\it strain density} of $\gamma$.
%In the proof of Theorem \ref{thm:canonical-frame} below,
%we will show that $\upsilon_{\gamma}$ is invariant under the action of the conformal group.
The exterior differential form $\sigma_\gamma:=\upsilon_{\gamma}dt$ is the {\it infinitesimal strain}.
In the proof of Theorem \ref{thm:canonical-frame} below, we will show that $\sigma_\gamma$
is invariant under the action of the conformal group and changes of parameter.
The {\it strain} is a function $u_{\gamma}:I\to \R$, such that $du_{\gamma}=\sigma_{\gamma}$. By construction, the strain is a non-decreasing function of class $C^1$, uniquely defined up to an additive constant. A point $\gamma(t_0)$ is called a {\it conformal vertex} if the infinitesimal strain vanishes at $t_0$. A timelike curve without conformal vertices is said {\it generic}. If $\upsilon_{\gamma}=0$, $\gamma$ is said a {\it conformal cycle}.
\end{defn}

\begin{remark}
The strain and the infinitesimal strain are dimensionless quantities. The infinitesimal strain is the Lorentzian analogue of the conformal arc element of a curve in the 3-dimensional round sphere \cite{LO2010,M1,MN}.
\end{remark}

\begin{defn}
Two timelike curves $\gamma:I\to \mathcal{E}^{1,2}$ and $\widetilde{\gamma}:\widetilde{I}\to \mathcal{E}^{1,2}$ are said to be {\it conformally equivalent} to each other if there exist a change of parameters $f:I\to \widetilde{I}$ and a restricted conformal transformation $F\in \mathrm{A}^{\uparrow}_+(2,3)$ such that $\gamma = F\cdot \widetilde{\gamma}\circ f$.
\end{defn}

\begin{prop}
A conformal cycle is equivalent to the centerline of the positive adS-chamber.
\end{prop}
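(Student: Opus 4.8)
The plan is to show that a conformal cycle—that is, a timelike curve $\gamma$ with $\upsilon_\gamma \equiv 0$—is conformally equivalent to a fixed model curve, namely the ``centerline'' of the positive adS-chamber, which under $\mathbf{j}_{adS}$ is the closed timelike geodesic $\theta \mapsto (\cos\theta, \sin\theta, 1, 0, 0)$, i.e. the curve $\theta\mapsto [\cos\theta\, P^o_0 + \sin\theta\, P^o_1 + P^o_2]$ in $\mathcal{E}^{1,2}$. First I would unwind the condition $\upsilon_\gamma = 0$: it means $\pi_{\mathcal{N}}(\gamma''') = 0$, i.e. $\gamma'''|_t$ lies in the conformal osculating space $\mathcal{T}|_t = \span\{\gamma,\gamma',\gamma''\}$ for every $t$. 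Working in a null lift, I would argue that this forces the $4$-dimensional subspace $W|_t := \span\{\gamma,\gamma',\gamma'',\gamma'''\}$ to be independent of $t$: differentiating, $\gamma^{(4)} \in W|_t + \span\{\gamma'''\}' \subset W|_t$ once we know $\gamma''' \in \mathcal{T}|_t$ and differentiate that relation. So $\gamma$ lies in a fixed $4$-dimensional subspace $W \subset \R^{2,3}$, and in fact in the $3$-dimensional projective quadric it cuts out.

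Next I would pin down the signature of $W$. Since $\mathcal{T}|_t$ has signature $(-,-,+)$ and sits inside $W$, the subspace $W$ has signature either $(2,2)$ or $(-,-,+,0)$; the degenerate case is excluded because a timelike curve in a degenerate hyperplane section would be contained in a null cone over a lower-dimensional quadric and a short computation with the lift shows $\gamma''$ could not then be spacelike with the required genericity (alternatively, $\langle\gamma',\gamma'\rangle < 0$ together with the structure equations rules it out). Hence $W$ has signature $(2,2)$, and $[W\cap \mathcal{N}^2_+]$ meets $\mathcal{E}^{1,2}$ in a copy of a $2$-dimensional anti-de Sitter-type quadric; comparing with the description of the adS-chambers in Section~\ref{s:pre}, the trace of $W$ on $\mathcal{E}^{1,2}$ is precisely the boundary behaviour one expects and $\gamma$ is a timelike curve inside one adS-chamber (the closure being a totally umbilic torus $\mathcal{T}_\infty$ when $W$ is tangent to the cone). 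Using transitivity of $\mathrm{A}^{\uparrow}_+(2,3)$ on appropriately-signed $4$-planes (which follows from Witt's theorem for the isometry group $\mathrm{A}^{\uparrow}_+(2,3)$, up to the orientation/time-orientation bookkeeping), I may apply a conformal transformation to assume $W = \span\{P^o_0,P^o_1,P^o_2,P^o_3\}$ or the analogous $(1,1)$-signature model, reducing to a $2$-dimensional Lorentzian quadric which is conformally a flat Lorentzian torus.

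Once $\gamma$ is a timelike curve in a fixed model $2$-quadric $Q$ of signature $(1,1)$, the final step is to show any such curve is conformally equivalent to the standard closed timelike geodesic there. On $Q$ the conformal group acts as $\mathrm{PSL}(2,\R)\times\mathrm{PSL}(2,\R)$ in null coordinates $(u,v)$, and a timelike immersion is a graph $v = h(u)$ with $h' > 0$; a pair of $\mathrm{PSL}(2,\R)$ factors lets one simultaneously normalize $u \mapsto u$ and $v = h(u) \mapsto v = u$ (after a reparametrization), so every timelike curve in $Q$ is conformally a reparametrization of the diagonal, which is exactly the centerline. I expect the main obstacle to be the bookkeeping in the middle step: proving rigorously that $\upsilon_\gamma\equiv 0$ forces $\gamma$ into a fixed $(2,2)$-subspace (ruling out the degenerate hyperplane, and handling the lift carefully so the argument is parametrization-independent), and then matching the resulting quadric with the adS-chamber data from Section~\ref{s:pre} with the correct orientation and time-orientation so that the normalizing element genuinely lies in $\mathrm{A}^{\uparrow}_+(2,3)$ rather than merely in $\mathrm{O}(2,3)$. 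The two-dimensional endgame is then essentially the classical statement that all timelike lines in the flat Lorentzian torus are conformally equivalent.
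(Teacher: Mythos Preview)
Your opening observation is right: $\upsilon_\gamma\equiv 0$ means $\gamma'''|_t\in\mathcal{T}|_t=\span\{\gamma,\gamma',\gamma''\}$ for every $t$. But then $W|_t:=\span\{\gamma,\gamma',\gamma'',\gamma'''\}$ is \emph{three}-dimensional, not four: it equals $\mathcal{T}|_t$ itself. Since $\gamma',\gamma'',\gamma'''$ all lie in $\mathcal{T}|_t$, the $3$-plane $\mathcal{T}|_t$ is invariant under differentiation and hence constant. This is exactly what the paper does: the osculating space is a fixed $(2,1)$-subspace $V\subset\R^{2,3}$, one normalizes $V$ to $[M^o_0,M^o_1,M^o_4]$ by an element of $\mathrm{A}^{\uparrow}_+(2,3)$, and then the null rays in $V$ form a one-dimensional conic which the paper parametrizes explicitly. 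Any timelike immersion into a one-dimensional manifold is, up to reparametrization, the identity --- so the endgame is immediate.

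Your detour through a $4$-dimensional $(2,2)$-subspace and a $2$-dimensional Lorentzian quadric is therefore unnecessary, and in fact the final step of that detour is wrong. On the torus $S^1\times S^1$ with null coordinates $(u,v)$, a (closed) timelike curve is a graph $v=h(u)$ with $h\in\mathrm{Diff}^+(S^1)$, and the induced action of the stabilizer of the $(2,2)$-plane is only $\mathrm{PSL}(2,\R)\times\mathrm{PSL}(2,\R)$, sending $h\mapsto g_2\circ h\circ g_1^{-1}$. This is a $6$-dimensional group acting on the infinite-dimensional space $\mathrm{Diff}^+(S^1)$; it is very far from transitive, so you cannot ``normalize $v=h(u)$ to $v=u$'' in general. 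The argument only works because the correct subspace is $3$-dimensional and the resulting quadric is already a single cycle.
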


\begin{proof}
It is quite easy to see that the normal and osculating spaces of a conformal cycle are constants.
By possibly applying a restricted conformal transformation, we may assume that the osculating space coincides with $[M^o_0,M^o_1,M^o_4]$. Then, using the identification of $\mathcal{E}^{1,2}$ with $\mathcal{M}^{1,2}$, we can write $\gamma=[x_0M^o_0+x_1M^o_1+x_4M^o_4]$, where $x_0$, $x_1$ and $x_4$ are smooth functions, such that $2x_0x_4-x_1^2=0$. Hence, there exists a smooth function $\phi:I\to \R$, such that
$x_0(t)=(\cos\phi(t)+1)/\sqrt{2}$, $x_1(t)=\sin \phi(t)$ and $x_4(t)=(\cos \phi(t)-1)/\sqrt{2}$.
Since $\gamma$ is a rank-one map, the derivative of $\phi$ is nowhere vanishing.
Taking $\phi$ as a new parameter, we obtain
$\gamma(\phi)=\cos(\phi)P^o_1+\sin(\phi)P^0_1$. This proves the result.
\end{proof}

\begin{remark}
A conformal cycle consists of the null rays belonging to the intersection of the null cone of $\R^{2,3}$ with a 3-dimensional linear subspace of type $(2,1)$. Therefore, the totality of conformal cycles can be identified with $G_{2,1}(\R^{2,3})$, the 6-dimensional Grassmannian of 3-dimensional timelike subspaces of type $(2,1)$. This also shows that any conformal cycle is invariant under a 3-dimensional group of restricted conformal transformations isomorphic to $\O^{\uparrow}_+(2,1)$.
\end{remark}

Given a timelike curve $\gamma :I\to \mathcal{E}^{1,2}$, the totality of null rays belonging
to $\mathcal{T}|_{t_0}$ is the {\it osculating cycle} of $\gamma$ at $\gamma(t_0)$. We give two characterizations of conformal vertices using osculating cycles. The proof of the results relies on simple computations and is omitted.

\begin{prop}Let $\gamma$ be a timelike curve. Then:
\begin{itemize}
\item The osculating cycle of $\gamma$ at $\gamma(t_0)$ has second order analytic contact with $\gamma$ at $\gamma(t_0)$. In addition, $\gamma(t_0)$ is a conformal vertex if and only if the order of analytic contact is strictly bigger than 2.

\item $\gamma(t_0)$ is a conformal vertex if and only if $t_0$ is a stationary point of the curve
\[
  I \ni t \mapsto [M_0|_{t_0},M_1|_{t_0},M_4|_{t_0}]\in G_{2,1}(\R^{2,3}).
  \]
\end{itemize}
\end{prop}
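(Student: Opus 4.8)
The plan is to reduce both assertions to the single algebraic fact that $\gamma(t_0)$ is a conformal vertex if and only if $\gamma'''(t_0)\in\mathcal T|_{t_0}$. To this end, work throughout with the null lift of $\gamma$ afforded by the model $\mathcal E^{1,2}\subset\R^{2,3}$ (so $\langle\gamma,\gamma\rangle\equiv 0$), for which $\gamma(t),\gamma'(t),\gamma''(t)$ span $\mathcal T|_t$; the conformal and parametric invariance of all the quantities involved is part of the computation carried out in the proof of Theorem \ref{thm:canonical-frame}. Since $\mathcal T|_{t_0}$ has signature $(-,-,+)$, its orthogonal complement $\mathcal N|_{t_0}$ is positive definite; as the denominator of \eqref{strain-density} is a positive function, $\upsilon_{\gamma}$ vanishes at $t_0$ if and only if its numerator $\langle\pi_{\mathcal N}(\gamma'''),\pi_{\mathcal N}(\gamma''')\rangle$ does, and by the anisotropy of $\mathcal N|_{t_0}$ this happens exactly when $\pi_{\mathcal N}(\gamma'''(t_0))=0$, that is, when $\gamma'''(t_0)\in\mathcal T|_{t_0}$.

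For the first item I would make the notion of contact explicit through a defining submersion. By definition the osculating cycle $\mathcal C_{t_0}$ is, in the model $\mathcal E^{1,2}$, the set $\mathcal E^{1,2}\cap\mathcal T|_{t_0}$, i.e. the zero locus of $\pi_{\mathcal N|_{t_0}}$ restricted to $\mathcal E^{1,2}$, and a short computation shows that this restriction is a submersion onto $\mathcal N|_{t_0}$ near $\gamma(t_0)$ (its kernel on $T_{\gamma(t_0)}\mathcal E^{1,2}$ is the line spanned by $\gamma'(t_0)$). Hence the order of analytic contact of $\gamma$ with $\mathcal C_{t_0}$ at $t_0$ is one less than the order of vanishing at $t_0$ of the contact function $\mathbf c(t):=\pi_{\mathcal N|_{t_0}}(\gamma(t))$. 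Expanding in Taylor series, $\mathbf c(t_0+s)=\sum_{k\ge 0}\frac{s^{k}}{k!}\,\pi_{\mathcal N|_{t_0}}\bigl(\gamma^{(k)}(t_0)\bigr)$, and the terms with $k=0,1,2$ vanish because $\gamma(t_0),\gamma'(t_0),\gamma''(t_0)\in\mathcal T|_{t_0}$, so the leading term is $\frac{s^{3}}{6}\,\pi_{\mathcal N|_{t_0}}(\gamma'''(t_0))$. Therefore $\mathbf c$ always vanishes to order at least $3$ (contact at least $2$), and it vanishes to order at least $4$ --- i.e. the contact is strictly bigger than $2$ --- exactly when $\pi_{\mathcal N|_{t_0}}(\gamma'''(t_0))=0$, which by the first paragraph means precisely that $\gamma(t_0)$ is a conformal vertex.

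For the second item I would differentiate the Grassmannian-valued map $\Psi\colon t\mapsto\mathcal T|_t\in G_{2,1}(\R^{2,3})$ --- the map written $t\mapsto[M_0|_t,M_1|_t,M_4|_t]$ in the statement --- using the canonical identification $T_{W}\,G_{2,1}(\R^{2,3})\cong\hom\bigl(W,\R^{5}/W\bigr)$. Since $(\gamma,\gamma',\gamma'')$ is a moving basis of $\Psi(t)=\mathcal T|_t$, the velocity $\Psi'(t)$ is represented by the homomorphism $\gamma\mapsto\gamma'\bmod\mathcal T|_t$, $\gamma'\mapsto\gamma''\bmod\mathcal T|_t$, $\gamma''\mapsto\gamma'''\bmod\mathcal T|_t$; the first two classes vanish, so $\Psi'(t)=0$ if and only if $\gamma'''(t)\in\mathcal T|_t$, which by the first paragraph is again the vertex condition.

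The only genuine --- and minor --- obstacle is the bookkeeping hidden in the phrase ``analytic contact between a curve and a cycle'': one must commit to the convention that the contact order equals the order of vanishing of a defining map minus one, and verify that $\pi_{\mathcal N|_{t_0}}$ really does cut out $\mathcal C_{t_0}$ transversally near $\gamma(t_0)$. Once this normalization is in place, both claims drop out of the Taylor expansion of $\gamma$ at $t_0$ together with the elementary formula for the derivative of a moving linear subspace in terms of a moving basis.
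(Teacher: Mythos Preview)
Your argument is correct. The paper itself omits the proof of this proposition, saying only that it ``relies on simple computations and is omitted,'' so there is no approach of the paper's to compare against; your reduction of both items to the single condition $\gamma'''(t_0)\in\mathcal T|_{t_0}$, via the positive-definiteness of $\mathcal N|_{t_0}$, the Taylor expansion of the contact function $\pi_{\mathcal N|_{t_0}}\circ\gamma$, and the standard identification $T_W G_{2,1}\cong\hom(W,\R^5/W)$, is exactly the kind of computation the authors had in mind. One cosmetic remark: the subscript $t_0$ on $M_0,M_1,M_4$ in the displayed map of the second item is evidently a typo for $t$ (otherwise the map is constant), and you have silently and correctly read it as $t\mapsto\mathcal T|_t$; you might flag this explicitly.
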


\begin{remark}{The conformal strain density of a curve at a point $\gamma(t)$ measures the infinitesimal distorsion of the curve from its osculating cycle.}\end{remark}

\begin{figure}[ht]
\begin{center}
\includegraphics[height=6.2cm,width=6.2cm]{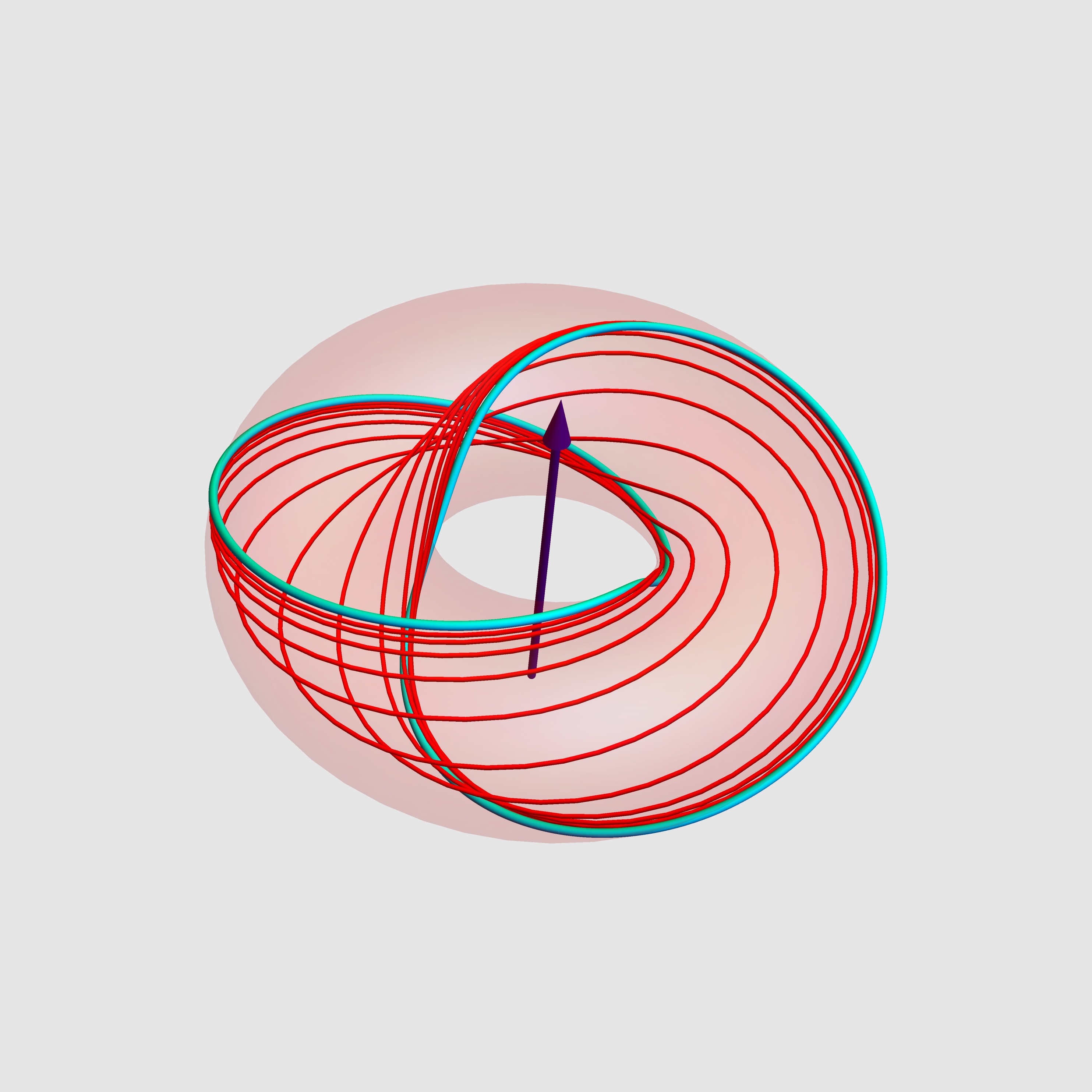}
\includegraphics[height=6.2cm,width=6.2cm]{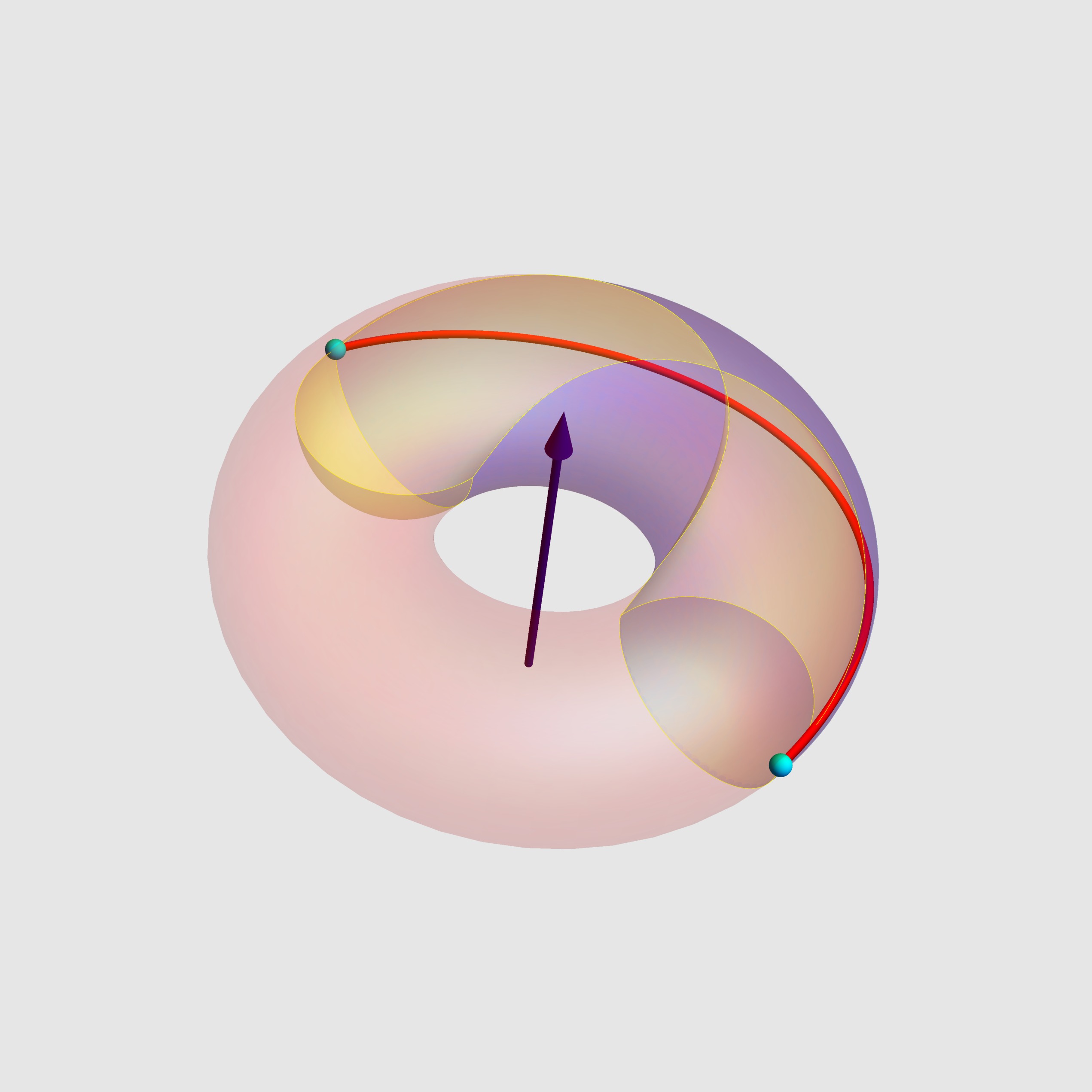}
\caption{Curves with constant curvatures of the Classes $C_1$ (on the left) and $C_3$ (on the right), with parameters $a=0.5$, $b=0.2$ and $a=1$, $b=1.42009$, respectively.}\label{FIG4C1C3}
\end{center}
\end{figure}

\subsection{The canonical conformal frame of a generic timelike curve}\label{ss:conf-fr}

Let $\mathcal{M}$ be the manifold of M\"obius frames of $\R^{2,3}$.
The map $\mathrm{A}^{\uparrow}_+(2,3)\to \mathcal{M}$, $F \mapsto (F({M}^o_0),\dots,F(M^o_4))$,
%The map $F\in \mathrm{A}^{\uparrow}_+(2,3)\mapsto (F({M}^o_0),\dots,F(M^o_4)) \in \mathcal{M}$
%F \cdot\mathbf{M}^o\in \mathcal{M}
%F\cdot \mathbf{B}^o\in \mathcal{M}$
gives an explicit identification of the conformal group with the frame manifold $\mathcal{M}$.
Consider a timelike curve $\gamma:I\to \mathcal{M}^{1,2}$. A {\it conformal frame} along $\gamma$ is a lift $\mathbf{M}:I\to \mathcal{M}$ of $\gamma$ to $\mathcal{M}$, i.e., a smooth map from the open interval $I$ to $\mathcal{M}$, such that $M_0|_t=r(t)\cdot \gamma(t)$, where $r(t)>0$, for each $t\in I$.
Given a frame along $\gamma$, we put
$\mathbf{M}^*(\mu)=\mathfrak{m}dt$, where $\mathfrak{m}=(m^i_j)$
is a smooth map with values in the Lie algebra $\mathfrak{m}(2,3)$.
%If $m^i_j$, $i,j=0,\dots,4$, denote the entries of $\mathfrak{m}$, then
%Taking values in $\mathfrak{m}(2,3)$, we can write
We can write
\[
\mathfrak{m}= \left(\begin{matrix}
m^0_0 & m^0_1 & m^0_2 & m^0_3 & 0\\
m^1_0 & 0 & m^1_2& m^1_3 & -m^0_1\\
 m^2_0 & m^1_2 & 0 & -m^3_2 & m^0_2\\
m^3_0 & m^1_3 & m^3_2 & 0 & m^0_3\\
0 & -m^1_0 & m^2_0 & m^3_0 & -m^0_0\\
 \end{matrix}\right).
 \]
%
%With some abuse of notation, we write $\mu^i_j=m^i_jdt$.
%
Slightly abusing notation, we omit $\mathbf{M}^*$
%when writing the pull-back of forms to $I$
and write $\mu^i_j=m^i_jdt$ for the pull-back of forms.

\begin{thm}\label{thm:canonical-frame}
Let $\gamma:I\to \mathcal{E}^{1,2}$ be a generic, future-oriented, timelike curve. Then there exists a
unique conformal frame $\mathbf{M}:I\to \mathcal{M}$ along $\gamma$,
the {\em canonical conformal frame}, such that
%
%\begin{itemize}
%\item $\mu^1_0=\mu_4^2=\sigma_{\gamma}$;
%\item $\mu^0_0=\mu^2_0=\mu^3_0=\mu^2_1=\mu^3_1=\mu_4^3=0$;
%\item $\mu_2^3=k\sigma_{\gamma}$ and $\mu_4^1=h\sigma_{\gamma}$, where $k,h:I\to \R$ are smooth functions,
%called the {\em conformal curvatures} of $\gamma$.
%\end{itemize}
%
%\begin{equation}\label{canonical-frame}
\[
  (\mu^i_j) =
  \left(\begin{matrix}
  0 & -h  &  1 & 0 & 0 \\
  1 &  0  &  0 & 0 & h   \\
  0 & 0  & 0 & -k  & 1 \\
  0 & 0 & k  & 0 & 0 \\
  0  & -1 & 0 & 0 & 0\\
   \end{matrix}\right)\sigma_{\gamma},
   \]
%\end{equation}
where $\sigma_{\gamma}$ is the infinitesimal strain of $\gamma$ and $h, k : I \to\R$ are smooth functions,
called the {\em conformal curvatures} of $\gamma$.

\end{thm}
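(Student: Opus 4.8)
The proof is a standard moving-frames reduction à la Cartan: start with an arbitrary conformal frame along $\gamma$ and successively normalize the components of $\mathfrak m$ by the freedom available in the structure group, checking at each stage that the normalization is achievable and determines the residual freedom uniquely, until no freedom remains. First I would recall that the isotropy subgroup $G_0 \subset \mathrm{A}^{\uparrow}_+(2,3)$ fixing the ray $[M^o_0]$ is the parabolic subgroup of upper-block-triangular matrices in the M\"obius representation: it acts on a lift by rescaling $M_0 \mapsto r M_0$ with $r>0$, by an $\O^{\uparrow}_+(1,1)$-type rotation in the $(M_1,M_4)$ "screen" directions fixing $M_0$, by a rotation of $M_2,M_3$, and by the abelian "special conformal" part that adds multiples of $M_0$ to $M_1,M_2,M_3$ and correspondingly modifies $M_4$. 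The plan is to use these four blocks of freedom to kill, in order: (i) the entries $m^1_2,m^1_3$ (screen vs.\ space coupling) and the "acceleration" directions, arriving at $\mu^2_0 = \upsilon\,dt$ with $\upsilon\ge 0$ and $\mu^3_0=0$; (ii) rescale $M_0$ so that $\mu^1_0 = \mu^2_0$, i.e.\ both equal $\sigma_\gamma$; (iii) use the special-conformal freedom to impose $m^0_0 = 0$ and $m^0_2 = 0$; (iv) use the remaining rotation in the $(M_1,M_4)$-plane to fix the sign/normalization giving $\mu^4_1 = -\sigma_\gamma$, and observe the $M_2,M_3$-rotation is pinned down by the requirement that $\mu^3_2$ be the only surviving curvature $k\,\sigma_\gamma$ with $\mu^2_1 = \sigma_\gamma$ of coefficient $1$.

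The heart of the matter is the genericity hypothesis: the normalization $\upsilon_\gamma \ne 0$ is exactly what is needed to divide by the strain density when passing to "strain arclength," i.e.\ to write every remaining form as a function multiple of $\sigma_\gamma = \upsilon_\gamma\,dt$. Concretely, after the first normalization one finds $\langle \pi_{\mathcal N}(\gamma'''),\pi_{\mathcal N}(\gamma''')\rangle / |\langle\gamma',\gamma'\rangle|$ appearing as the fourth power of the coefficient that must be made equal to $1$ (this is how $\upsilon_\gamma$ in \eqref{strain-density} enters), and reparametrizing by $u_\gamma$ is legitimate precisely because $du_\gamma = \sigma_\gamma$ is nowhere zero on a generic curve. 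Along the way one gets for free the claim promised in the text that $\sigma_\gamma$ is invariant under the conformal group and under reparametrization: conformal transformations act on frames by left translation, which leaves $\mathbf M^*\mu$ — hence every $m^i_j\,dt$ and in particular the normalized combination defining $\sigma_\gamma$ — unchanged, and reparametrization invariance of $\upsilon_\gamma\,dt$ is immediate from the chain rule and the homogeneity degrees of numerator and denominator in \eqref{strain-density}.

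The structure equations $d\mu = -\tfrac12[\mu,\mu]$ are not strictly needed for existence and uniqueness of the frame, but I would invoke them (equivalently, just differentiate the normalization relations) to confirm consistency — in particular that after imposing $\mu^1_0 = \mu^2_0$, $\mu^3_0 = 0$, $\mu^0_0 = 0$, $\mu^0_2 = 0$, $\mu^1_2 = \mu^1_3 = 0$, $\mu^4_1 = -\mu^1_0$, the only independent surviving forms are $\sigma_\gamma$ together with the two functions $h := -m^0_1/\upsilon_\gamma$ and $k := m^3_2/\upsilon_\gamma$, and that the resulting $\mathfrak m$ is exactly the displayed matrix. Uniqueness follows because at each of the four normalization steps the stabilizer of the normalized condition inside the previously-surviving group is strictly smaller, and after step (iv) it is trivial; any two canonical frames therefore differ by the identity.

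\textbf{Main obstacle.} The only real subtlety — beyond bookkeeping of the parabolic group action on the entries of $\mathfrak m$ — is verifying that the successive normalizations are \emph{simultaneously} achievable, i.e.\ that killing later entries does not reintroduce earlier ones; this is where one must order the reductions correctly (space-rotation and screen-rotation first, then the $M_0$-rescaling, then special conformal, then the final sign fixing) and check that the lower-triangular "special conformal" part, which is used last among the non-discrete reductions, acts on $m^0_0$ and $m^0_2$ independently of the already-normalized $m^1_0, m^2_0, m^3_0$. I expect this commutation/triangularity check to be the one place demanding genuine care rather than routine computation.
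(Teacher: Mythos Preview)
Your overall strategy---a Cartan-style reduction of the parabolic isotropy---is exactly what the paper does, but several of your concrete normalizations are wrong, and as written the plan would not reach the displayed matrix.

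First, your description of the isotropy $G_0$ is off. In the M\"obius basis, $M_0$ and $M_4$ are null with $\langle M_0,M_4\rangle=-1$, while $M_1$ is unit timelike; the stabilizer of $[M_0]$ factors as a dilation $r>0$ of $M_0$, an element $A\in\O^{\uparrow}_+(1,2)$ acting on the block $(M_1,M_2,M_3)$, and a $3$-parameter ``special conformal'' piece $u=(u^1,u^2,u^3)$. There is no independent ``$(M_1,M_4)$-rotation'': once $M_0,\dots,M_3$ are fixed, $M_4$ is determined. This matters because your step~(iv) invokes a freedom that does not exist.

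Second, and more seriously, the target values you list do not match the matrix you are trying to produce. In the canonical frame one has $\mu^2_0=\mu^3_0=0$ (not $\mu^2_0=\upsilon\,dt$), $\mu^2_1=\mu^3_1=0$ (not $\mu^2_1=\sigma_\gamma$), and $m^0_2=\mu^2_4=\sigma_\gamma$ (not $m^0_2=0$). The entry $\mu^4_1=-\sigma_\gamma$ is not a normalization at all; it is forced by the Lie-algebra constraint $\mu^4_1=-\mu^1_0$. So steps (i)--(iv) as stated would either set the wrong entries or spend gauge freedom that is already gone.

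The paper's reduction runs as follows, and I recommend you reorganize along these lines. \emph{First order}: use the $\O^{\uparrow}_+(1,2)$ block to align $M_1$ with $\gamma'$, giving $\mu^2_0=\mu^3_0=0$, $m^1_0>0$; residual group $\{r,\theta,u,\hat u\}$ with $\theta\in\SO(2)$ acting on $(M_2,M_3)$ and $\hat u=(u^2,u^3)$. \emph{Second order}: use $\hat u$ to kill $\mu^2_1,\mu^3_1$; residual $\{r,\theta,u\}$. At this stage the quartic form $\bigl((\mu^2_4)^2+(\mu^3_4)^2\bigr)(\mu^1_0)^2$ is gauge-invariant, and a direct computation identifies it with $\upsilon_\gamma^4\,dt^4$---this is where the strain density enters and where the invariance of $\sigma_\gamma$ is proved. \emph{Third order}: genericity means $(m^2_4)^2+(m^3_4)^2>0$, so $(r,\theta)$ can be chosen to set $\mu^2_4=\mu^1_0$ and $\mu^3_4=0$; residual freedom is the single function $u=u^1$. \emph{Fourth}: use $u$ to set $\mu^0_0=0$. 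The two surviving functions are then $h=\mu^1_4/\mu^1_0$ and $k=\mu^3_2/\mu^1_0$, as you correctly identify at the end.

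So the ``main obstacle'' you flag---checking that later normalizations do not disturb earlier ones---is real, but it is resolved by this specific ordering (tangent alignment, then $\hat u$, then $(r,\theta)$, then $u$), not the one you propose.
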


\begin{proof}
If $\mathbf{M}$ and $\widehat{\mathbf{M}}$ are two conformal frames along $\gamma$, then $\widehat{\mathbf{M}}=\mathbf{M} \mathbf{X}$, where $\mathbf{X}$ is a smooth map taking values
in the subgroup
$M^{\uparrow}_+(2,3)_0=\{\mathbf{X}\in M^{\uparrow}_+(2,3) \,:\, X^1_0=X^2_0=X^3_0=X^4_0=0 \}$ of $M^{\uparrow}_+(2,3)$.
The elements of $M^{\uparrow}_+(2,3)_0$ can be written as
\[
\mathbf{X}(r,A,u)=\left(
                      \begin{array}{ccc}
                        r & {}^*\!u A & \frac{{}^*\!u u}{2r} \\
                        0 & A & u/r \\
                        0 & 0 & 1/r \\
                      \end{array}
                    \right),
                    \]
where $A\in O^{\uparrow}_+(1,2)$, $r>0$, $u= {}^t\!(u^1,u^2,u^3)$ and ${}^*\!u=(-u^1,u^2,u^3)$.
The two maps $\mathfrak{m}$ and $\widehat{\mathfrak{m}}$ are related by
\begin{equation}\label{CG}
 \widehat{\mathfrak{m}} = \mathbf{X}^{-1}  \mathbf{X}'+ \mathbf{X}^{-1}  \mathfrak{m}  \mathbf{X}.
  \end{equation}
A conformal frame along $\gamma$ is of {\it first order} if $M_0\wedge M_0'$ is a positive multiple of $M_0\wedge M_1$. It is easily seen that first order frames do exist along any timelike curve. If $\mathbf{M}:I\to \mathcal{M}$ is a first order conformal frame along a future-directed timelike curve, then $m^1_0>0$ and $\mu^2_0=\mu^3_0=0$.
If $\mathbf{M}$ is a first order conformal frame, then any other is given by $\widetilde{\mathbf{M}}=\mathbf{M} \mathbf{X}$, where $\mathbf{X}$ is a smooth map with values in the subgroup
\begin{equation}\label{M1}M^{\uparrow}_+(2,3)_1=\{X\in M^{\uparrow}_+(2,3)_0 \,:\, X^1_1=1,X^2_1=X^3_1=X^4_1=0\}\end{equation}
of $M^{\uparrow}_+(2,3)_0$. Thus, we may write
\[
 \mathbf{X}=\mathbf{X}(r,\theta,u,\hat{u}) = \left(
                             \begin{array}{cccc}
                               r & -u & {}^t\!\hat{u} R(\theta) & \frac{-u^2+{}^t\!\hat{u} \hat{u}}{2r} \\
                               0 & 1 & 0 & u/r \\
                               0 & 0 & R(\theta) & \hat{u}/r\\
                               0 & 0 & 0 & 1/r \\
                             \end{array}
                           \right),
                           \]
where $r:I\to \R^+$, $\theta$, $u:I\to \R$, and $\hat{u}={}^t\!(u^2,u^3):I\to \R^2$ are smooth maps and
\[
 R(\theta)=\left(
              \begin{array}{cc}
                \cos\theta & -\sin\theta \\
                \sin\theta & \cos\theta \\
              \end{array}
            \right).
            \]
Using (\ref{CG}), we find $\tilde{\mu}^1_0=r\mu^1_0$ and $(\tilde{\mu}^2_1,\tilde{\mu}^3_1)=(\mu^2_1-u^2\mu^1_0,\mu^3_1-u^3\mu^1_0) R(\theta)$.
From this we see that any timelike curve admits a {\it second order} conformal frame, that is, a first order conformal frame such that $\mu^2_1=\mu^3_1=0$. In addition, if $\mathbf{M}$ and $\widetilde{\mathbf{M}}$ are second order conformal frames along $\gamma$, then $\widetilde{\mathbf{M}}=\mathbf{M} \mathbf{Y}$, where $\mathbf{Y}$ is a smooth map with values in the subgroup $M^{\uparrow}_+(2,3)_2=\{X\in M^{\uparrow}_+(2,3)_1 \,:\, X^2_4=1,\,X^3_4=0\}$ of $M^{\uparrow}_+(2,3)_1$.
Then, $\mathbf{Y}$ can be written as $\mathbf{Y}(r,\theta,u)=\mathbf{X}(r,\theta,u,0)$, where $r>0$, and $\theta$,
$u$ are smooth real-valued functions. From this we infer that
\begin{equation}\label{Gauge1}
 (\tilde{\mu}^2_4,\tilde{\mu}^3_4)=r^{-1}(\mu^2_4,\mu^3_4) R(\theta).
  \end{equation}
%\begin{equation}\label{Gauge1}
% {}^t\!(\widetilde{\mu}^2_4,\widetilde{\mu}^3_4)=r^{-1}R(\theta)\cdot {}^t\!(\mu^2_4,\mu^3_4).
%  \end{equation}
Therefore, the differential form
\begin{equation}\label{Q}
  \mathcal{Q}_{\gamma}=\left((\mu^2_4)^2+(\mu^3_4)^2\right)(\mu^1_0)^2=\left((m^2_4)^2+(m^3_4)^2\right)(m^1_0)^2 dt^4
   \end{equation}
is smooth and well defined, i.e., it does not depend on the choice of the second order frame. By construction, $\mathcal{Q}_{\gamma}$ is positive semidefinite, in the sense that $\mathcal{Q}_{\gamma} =Q_{\gamma}dt^4$, where $Q_{\gamma}$ is a smooth non-negative function. We now prove that $\sqrt[4]{{Q}_{\gamma}}$ coincides with the strain density \eqref{strain-density} of the curve. To this purpose, we choose a second order frame field along $\gamma$ such that $M_0=\gamma$ and $\gamma'=\mathrm{v}M_1$, where $\mathrm{v}=\sqrt{|\langle \gamma',\gamma'\rangle|}=m^1_0$ is the speed of $\gamma$. From the Frenet equations $\mathbf M' = \mathbf M\, \mathfrak m$ for $\mathbf{M}$, we obtain $\gamma''=\mathrm{v}'M_1+\mathrm{v}M_1'=\mathrm{v}'M_1+\mathrm{v}(m^0_1M_0- \mathrm{v}M_4)$.
Differentiating this equation and using again the Frenet equations yield
$\gamma'''\equiv -\mathrm{v}^2(m^2_4M_2+m^3_4M_3)$, modulo $\{M_0,M_1,M_4\}$. This implies
\begin{equation}\label{NC}
 \pi_{\mathcal{N}}(\gamma''')=-\mathrm{v}^2(m^2_4M_2+m^3_4M_3),\quad \langle \pi_{\mathcal{N}}(\gamma'''),\pi_{\mathcal{N}}(\gamma''')\rangle=\mathrm{v}^4((m^2_4)^2+(m^3_4)^2),
    \end{equation}
which combined with (\ref{Q})
%and (\ref{NC})
proves the claim. Now, using (\ref{Gauge1})
and the fact that $(m^2_4)^2+(m^3_4)^2>0$, it follows that there exist second order frames along $\gamma$,
such that $\mu^2_4=\mu^1_0$, $\mu^3_4=0$. These frames are said of {\it third order}.
If $\mathbf{M}$ and $\widetilde{\mathbf{M}}$ are third order frames along $\gamma$, then $\widetilde{\mathbf{M}}=\mathbf{M} \mathbf{Y}(u)$, where
$\mathbf{Y}(u)=\mathbf{X}(1,0,u,0)$ and $u:I\to \R$ is a smooth function. Then, from (\ref{CG}) is follows that $\tilde{\mu}^0_0=\mu^0_0+u\mu^1_0$. Therefore, we may single out a unique third order frame such that $\mu^0_0=0$,
%the {\em canonical conformal frame} along $\gamma$,
%which proves the result.
which satisfies the required properties.
\end{proof}

\begin{figure}[ht]
\begin{center}
\includegraphics[height=6.2cm,width=6.2cm]{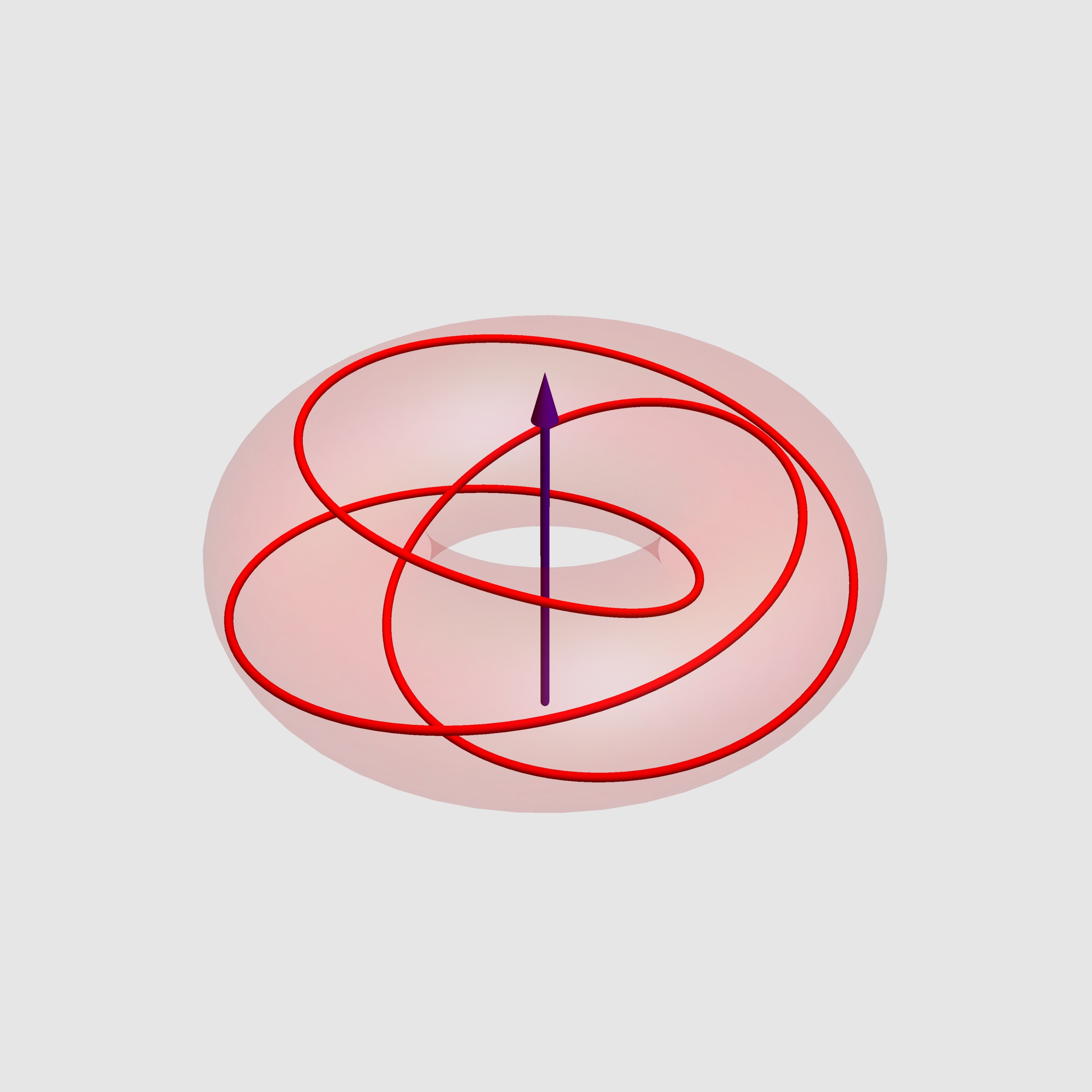}
\includegraphics[height=6.2cm,width=6.2cm]{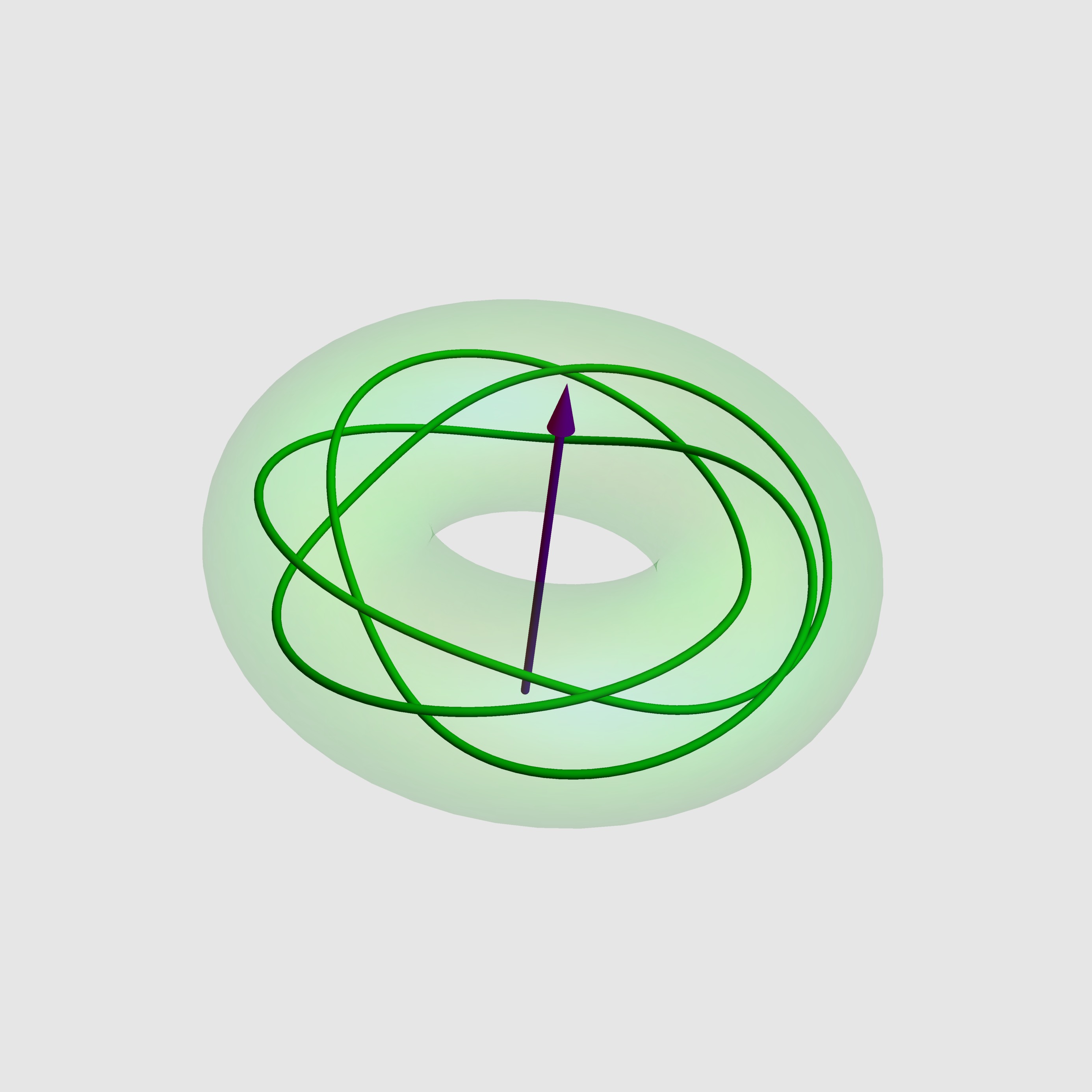}
\caption{Curves with constant curvatures of the Classes $C_{2i}$ (on the left) and $C_{2ii}$ (on the right), with parameters $a=0.5$, $b=2/3$ and $a=0.9$, $b=5/3$, respectively.}\label{FIG4C2iC2ii}
\end{center}
\end{figure}

The following are standard but relevant consequences of the existence of a canonical frame.

\begin{prop}
A generic timelike curve can be parametrized is such a way that $\sigma_{\gamma}$ coincides with the differential of the independent variable. In this case, we say that the curve is {\em parametrized by conformal parameter}, which is usually denoted by $u$. The conformal parameter is uniquely defined up to a constant, $u\to u+c$.
\end{prop}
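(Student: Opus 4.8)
The plan is to take the strain of $\gamma$ itself as the new parameter; the only point that needs care is to verify that along a \emph{generic} curve the strain density is smooth and strictly positive, after which the statement follows from a routine reparametrization argument. So first I would note that, for a timelike curve, the vectors $\gamma|_t,\gamma'|_t,\gamma''|_t$ span the conformal osculating space $\mathcal{T}|_t$ and depend smoothly on $t$; hence $\mathcal{T}|_t$, the normal space $\mathcal{N}|_t$, and the orthogonal projection $\pi_{\mathcal{N}}|_t$ depend smoothly on $t$, and therefore the radicand in \eqref{strain-density} is a smooth function on $I$. Since $\gamma$ is generic it has no conformal vertices, so this function is strictly positive, and because $x\mapsto\sqrt[4]{x}$ is smooth on $(0,\infty)$ we conclude that $\upsilon_{\gamma}$ is smooth and $\upsilon_{\gamma}>0$ on $I$. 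In particular $\sigma_{\gamma}=\upsilon_{\gamma}\,dt$ is a nowhere-vanishing $1$-form and every strain $u_{\gamma}$ of $\gamma$ (i.e. every $C^{1}$ function with $du_{\gamma}=\sigma_{\gamma}$) is in fact smooth with everywhere positive derivative.

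Next I would fix $t_{0}\in I$ and put $u_{\gamma}(t)=\int_{t_{0}}^{t}\upsilon_{\gamma}(s)\,ds$, so that $u_{\gamma}$ is a diffeomorphism of $I$ onto an open interval $J\subset\R$. Writing $\phi=u_{\gamma}^{-1}:J\to I$, we have $\phi'=1/(\upsilon_{\gamma}\circ\phi)$, and the reparametrized curve $\gamma\circ\phi$ is again a generic future-directed timelike curve. By the invariance of the infinitesimal strain under changes of parameter, established in the proof of Theorem~\ref{thm:canonical-frame}, the infinitesimal strain of $\gamma\circ\phi$ is $\phi^{*}\sigma_{\gamma}=(\upsilon_{\gamma}\circ\phi)\,\phi'\,du=du$. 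Hence $\gamma\circ\phi$ is parametrized so that its infinitesimal strain is the differential of the independent variable, i.e. it is parametrized by conformal parameter.

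Finally, for uniqueness, suppose $\gamma$ and $\bar\gamma$ parametrize the same curve by conformal parameters $t$ and $s$ respectively, and let $f$ be the (orientation-preserving) change of parameter with $\bar\gamma=\gamma\circ f$. Invariance of the infinitesimal strain gives $\sigma_{\bar\gamma}=f^{*}\sigma_{\gamma}$; since by hypothesis $\sigma_{\gamma}=dt$ and $\sigma_{\bar\gamma}=ds$, this reads $ds=f^{*}(dt)=f'(s)\,ds$, so $f'\equiv1$ and $f(s)=s+c$ for some constant $c\in\R$. Thus any two conformal parameters of $\gamma$ are related by a translation $u\mapsto u+c$, which completes the proof.
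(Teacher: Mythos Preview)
Your proof is correct and is precisely the standard reparametrization argument the paper has in mind; indeed, the paper does not spell out a proof of this proposition but simply records it as one of the ``standard but relevant consequences of the existence of a canonical frame'', and your argument---using that $\upsilon_\gamma>0$ is smooth on a generic curve, integrating to obtain the new parameter, and invoking the invariance of $\sigma_\gamma$ under reparametrization proved in Theorem~\ref{thm:canonical-frame}---is exactly that consequence made explicit.
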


\begin{remark}
Let $\gamma$ and $\widetilde{\gamma}$ be two generic timelike and future-directed curves, parametrized by conformal parameter. If they are equivalent to each other, the change of parameter is a shift of the independent variable, so that $\gamma(u) = F\cdot \widetilde{\gamma}(u+c)$, where $c$ is a real constant and $F\in \mathrm{A}^{\uparrow}_+(2,3)$ is a conformal transformation. The conformal curvatures are related by $k(u)=\widetilde{k}(u+c)$ and $h(u)=\widetilde{h}(u+c)$, for each $u\in I$.
\end{remark}

\begin{remark}
From now on, when considering a generic timelike curve parametrized by conformal parameter, we suppose that
its interval of definition $I$ be maximal. In this case, $I$ is called the {\em proper interval}
of $\gamma$.
\end{remark}

\begin{prop}
Two timelike curves $\gamma$, $\widetilde{\gamma}$ parametrized by conformal parameter are equivalent to each other if and only if $I=\widetilde{I}+c$, $k(u)=\widetilde{k}(u+c)$, and $h(u)=\widetilde{h}(u+c)$, for some constant $c$.
\end{prop}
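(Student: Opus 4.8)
The plan is to run the standard Cartan-type rigidity argument, built on the canonical conformal frame of Theorem~\ref{thm:canonical-frame} and on the identification of the frame manifold $\mathcal{M}$ with the Lie group $\mathrm{A}^{\uparrow}_+(2,3)$.

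The forward direction is essentially the content of the Remark preceding the statement, combined with the maximality of the proper interval. If $\gamma$ and $\widetilde{\gamma}$ are conformally equivalent, then, since the conformal parameter is intrinsically defined up to an additive constant and both intervals are proper, the change of parameters is forced to be a shift $u\mapsto u+c$; hence the intervals differ by $c$, and the transformation law of the canonical frame recorded in that Remark gives $k(u)=\widetilde{k}(u+c)$ and $h(u)=\widetilde{h}(u+c)$ on $I$.

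For the converse, suppose the stated relations hold for some real constant $c$. Replacing $\widetilde{\gamma}$ by its reparametrization through the shift $u\mapsto u+c$ — which is again a generic timelike curve parametrized by conformal parameter, lies in the same conformal class, and now has the same proper interval and the same conformal curvatures as $\gamma$ — we reduce to the case $c=0$, i.e.\ $I=\widetilde{I}$ and $h\equiv\widetilde{h}$, $k\equiv\widetilde{k}$ on $I$. Let $\mathbf{M},\widetilde{\mathbf{M}}:I\to\mathcal{M}$ be the canonical conformal frames of $\gamma$ and $\widetilde{\gamma}$ furnished by Theorem~\ref{thm:canonical-frame}. Since both curves are parametrized by conformal parameter, $\sigma_{\gamma}=du=\sigma_{\widetilde{\gamma}}$, and since $h=\widetilde{h}$ and $k=\widetilde{k}$, the explicit normalized form of the canonical Maurer--Cartan matrix in Theorem~\ref{thm:canonical-frame} shows that $\mathbf{M}^{*}(\mu)=\widetilde{\mathbf{M}}^{*}(\mu)$ as $\mathfrak{m}(2,3)$-valued $1$-forms on $I$. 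I would then invoke the uniqueness theorem for maps into a Lie group with prescribed pull-back of the Maurer--Cartan form: both $\mathbf{M}$ and $\widetilde{\mathbf{M}}$ solve the same Frenet system $\mathbf{M}'=\mathbf{M}\,\mathfrak{m}$, and two such solutions on the connected interval $I$ differ by a left-constant group element, so there is a fixed $F\in\mathrm{A}^{\uparrow}_+(2,3)$ with $\widetilde{\mathbf{M}}=F\cdot\mathbf{M}$ on $I$. Comparing the first frame vectors, and recalling that by definition of a conformal frame along a curve $[M_0|_u]=\gamma(u)$ and $[\widetilde{M}_0|_u]=\widetilde{\gamma}(u)$ in $\mathcal{M}^{1,2}$, we get $\widetilde{\gamma}(u)=F\cdot\gamma(u)$ for all $u\in I$; thus $\widetilde{\gamma}=F\cdot\gamma$ with the identity change of parameter, and undoing the shift shows that $\gamma$ and $\widetilde{\gamma}$ are conformally equivalent.

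The bookkeeping of the shift and the verification that the two pulled-back Maurer--Cartan forms coincide are routine. The step I expect to carry the real weight is the rigidity lemma for maps into a Lie group — that solutions of $\mathbf{M}'=\mathbf{M}\,\mathfrak{m}$ over a connected interval differ only by left translation, which rests on the Maurer--Cartan equation $d\mu=-\tfrac12[\mu,\mu]$. It is classical, but it is the one genuinely non-formal ingredient, and one must keep in mind that its applicability hinges on the canonical frame being \emph{uniquely} attached to $\gamma$ (Theorem~\ref{thm:canonical-frame}), so that equality of the frames really does force equality of the curves.
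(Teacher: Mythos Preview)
Your argument is correct and is precisely the standard Cartan rigidity argument the paper has in mind: the paper does not spell out a proof but presents the proposition as one of the ``standard but relevant consequences of the existence of a canonical frame,'' and your use of Theorem~\ref{thm:canonical-frame} together with the left-translation uniqueness for solutions of $\mathbf{M}'=\mathbf{M}\,\mathfrak{m}$ is exactly that consequence made explicit. Nothing is missing.
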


\begin{defn}
Given two smooth functions $k,h:I\to \R$, let $\mathcal{K}(h,k)$ denote the $\mathfrak{m}(2,3)$-valued map
defined by
$\mathcal{K}(h,k):= M^0_2-M^4_1- {k}M^2_3 - {h}M^0_1$.
%$\mathcal{K}(h,k):=M^0_1+M^4_2+kM^2_3+hM^4_1$.
The curvatures and the canonical frame can be used to build the {\em curvature operator} of $\gamma$, i.e., the map
$\mathfrak{K}_{\gamma}:I\to \mathfrak{a}(2,3)$ defined by $\mathbf{M}\, \mathcal{K}(h,k)\, \mathbf{M}^*$, where $k,h$ are the conformal curvatures, $\mathbf{M}$ is the canonical conformal frame and $\mathbf{M}^*|_t= {}^t\!(M^0|_t,\dots,M^4|_t)$ is the dual basis of $\mathbf{M}|_t$, for each $t\in I$.
\end{defn}

\begin{prop}
If $k,h:I\to \R$ are two smooth functions, then there is a generic timelike curve $\gamma$, parametrized by conformal parameter, whose canonical conformal frame $\mathbf{M}$ satisfies the {\em conformal Frenet equations}
$\mathbf{M}'=\mathbf{M}\,\mathcal{K}(k,h)$. By construction, $k$ and $h$ are the conformal curvatures of $\gamma$, which is unique, up to a restricted conformal transformation.
\end{prop}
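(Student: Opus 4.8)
The plan is to prove this as a ``fundamental theorem'' for generic timelike curves: integrate the conformal Frenet system $\mathbf{M}'=\mathbf{M}\,\mathcal{K}(h,k)$ as a left-invariant linear ODE on the conformal group, and then recover every required feature of the curve by inspecting the matrix $\mathcal{K}(h,k)$ against the normalizations isolated in the proof of Theorem~\ref{thm:canonical-frame}. Concretely, I would work in the matrix model $M^{\uparrow}_+(2,3)\subset\SL(5,\R)$ attached to the standard M\"obius basis, where $u\mapsto\mathcal{K}(h,k)(u)$ is a smooth curve in the Lie algebra $\mathfrak{m}(2,3)$ whose matrix is exactly the one displayed in Theorem~\ref{thm:canonical-frame} with the factor $\sigma_\gamma$ deleted.

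For existence, I would fix $u_0\in I$ and an initial frame $\mathbf{M}_0\in M^{\uparrow}_+(2,3)$ (say the identity) and solve the Cauchy problem $\mathbf{M}'=\mathbf{M}\,\mathcal{K}(h,k)$, $\mathbf{M}(u_0)=\mathbf{M}_0$. Since the system is linear, the solution $\mathbf{M}$ exists and is unique on all of $I$; since $\mathbf{M}^{-1}\mathbf{M}'=\mathcal{K}(h,k)$ takes values in $\mathfrak{m}(2,3)$, the solution stays in the closed subgroup $M^{\uparrow}_+(2,3)$ (equivalently, one differentiates ${}^{t}\mathbf{M}\,\mathtt m\,\mathbf{M}$ to see it is constant, notes $\det\mathbf{M}\equiv 1$, and checks that orientation, time-orientation and M\"obius type are preserved along the flow). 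Using the identification $\mathrm{A}^{\uparrow}_+(2,3)\cong\mathcal{M}$, I would then read $\mathbf{M}=(M_0,\dots,M_4)$ as a moving M\"obius frame and set $\gamma:=[M_0]\colon I\to\mathcal{M}^{1,2}\cong\mathcal{E}^{1,2}$. By construction $\mathbf{M}^{*}(\mu)$ is the canonical matrix with $\sigma_\gamma$ replaced by $du$: from its $0$-th column $M_0'=M_1$, so $\gamma$ is an immersion with conformal osculating space $[M_0,M_1,M_4]$, whose Gram matrix relative to $\mathtt m$ has signature $(-,-,+)$; hence $\gamma$ is timelike, and it is future-directed because its frame has $m^1_0=1>0$ (a direct check against the time-orientation conventions). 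The entries of $\mathcal{K}(h,k)$ are precisely the defining conditions of a first-, second- and third-order frame together with the final gauge $\mu^0_0=0$, so $\mathbf{M}$ is the canonical conformal frame of $\gamma$; a one-line computation with \eqref{strain-density} and \eqref{NC} gives $\upsilon_\gamma\equiv1$, so $\sigma_\gamma=du$, $u$ is a conformal parameter, $\gamma$ is generic, and $h,k$ are its conformal curvatures.

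For uniqueness up to $\mathrm{A}^{\uparrow}_+(2,3)$, I would take another generic timelike curve $\widetilde\gamma$ on $I$ parametrized by conformal parameter with the same curvatures $h,k$, and its canonical frame $\widetilde{\mathbf{M}}$, which by Theorem~\ref{thm:canonical-frame} also solves $\widetilde{\mathbf{M}}'=\widetilde{\mathbf{M}}\,\mathcal{K}(h,k)$. Since $\mathrm{A}^{\uparrow}_+(2,3)$ acts simply transitively on M\"obius frames, there is a unique $F\in\mathrm{A}^{\uparrow}_+(2,3)$ with $F\cdot\mathbf{M}(u_0)=\widetilde{\mathbf{M}}(u_0)$; as the system $\mathbf{M}'=\mathbf{M}\,\mathcal{K}(h,k)$ is invariant under left translation, $u\mapsto F\cdot\mathbf{M}(u)$ and $u\mapsto\widetilde{\mathbf{M}}(u)$ are two solutions agreeing at $u_0$, hence equal on $I$ by ODE uniqueness, so $F\cdot\gamma=\widetilde\gamma$.

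I do not expect a genuine obstacle here: the only substantive point is that the linear system, which a priori evolves in $\GL(5,\R)$, keeps its solution inside $M^{\uparrow}_+(2,3)$, and that the projective curve $[M_0]$ inherits the right causal character and time-orientation; both are routine once the structure equations of Theorem~\ref{thm:canonical-frame} are available, and everything else is inspection of the entries of $\mathcal{K}(h,k)$.
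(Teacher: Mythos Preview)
Your proposal is correct and is precisely the standard ``fundamental theorem'' argument the paper has in mind: the paper does not give a proof of this proposition, listing it among the ``standard but relevant consequences of the existence of a canonical frame'' after Theorem~\ref{thm:canonical-frame}. Your integration of the left-invariant linear system $\mathbf{M}'=\mathbf{M}\,\mathcal{K}(h,k)$ on $M^{\uparrow}_+(2,3)$, followed by the verification (via \eqref{Q}--\eqref{NC} and the first/second/third-order normalizations) that $\gamma=[M_0]$ is generic, timelike, future-directed, parametrized by conformal parameter, and has $\mathbf{M}$ as its canonical frame, together with the ODE-uniqueness/left-translation argument for uniqueness up to $\mathrm{A}^{\uparrow}_+(2,3)$, is exactly what is being tacitly invoked.
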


As a consequence of the previous results, we have the following.

\begin{cor}
The first conformal curvature $k$ vanishes identically if and only if there exists a restricted conformal transformation $F\in \mathrm{A}^{\uparrow}_+(2,3)$, such that the trajectory of $F\circ \gamma$ belongs to
the adS-wall of the Einstein universe.
\end{cor}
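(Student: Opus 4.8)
The plan is to read the statement off the conformal Frenet equations, the only genuinely non‑formal ingredient being the transitivity of $\mathrm{A}^{\uparrow}_+(2,3)$ on unit spacelike vectors. Throughout I parametrize $\gamma$ by conformal parameter, so that $\sigma_\gamma=du$ and, writing $\mathbf{M}=(M_0,\dots,M_4)$ for the canonical conformal frame, Theorem \ref{thm:canonical-frame} gives
\begin{equation*}
M_0'=M_1,\qquad M_1'=-hM_0-M_4,\qquad M_2'=M_0+kM_3,\qquad M_3'=-kM_2,\qquad M_4'=hM_1+M_2.
\end{equation*}
A preliminary remark, already implicit in the proof of Theorem \ref{thm:canonical-frame}, is that the conformal osculating space is $\mathcal{T}|_t=[M_0,M_1,M_4]|_t$; reading the scalar products off $\mathtt m$ one checks that $[M_0,M_1,M_4]^{\perp}=[M_2,M_3]$, so that $\mathcal{N}|_t=[M_2,M_3]|_t$ is a positive definite plane on which $M_2,M_3$ form an orthonormal pair of unit spacelike vectors.

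First I would show that $k\equiv 0$ forces the existence of such an $F$. If $k$ vanishes identically, then $M_3'=-kM_2=0$, so $M_3$ equals a fixed vector $Z$ with $\langle Z,Z\rangle=1$. Since $\langle M_0,M_3\rangle=0$ and $M_0=r\gamma$ with $r>0$, it follows that $\langle\gamma(t),Z\rangle\equiv 0$; that is, the trajectory of $\gamma$ consists of null rays lying in the spacelike hyperplane $Z^{\perp}$, which has signature $(2,2)$. Choosing $F\in\mathrm{A}^{\uparrow}_+(2,3)$ with $F(Z)=P^o_2$ — possible by the transitivity discussed below — and using that $F$ is a linear isometry, one gets $\langle F(\gamma(t)),P^o_2\rangle=\langle\gamma(t),Z\rangle\equiv 0$, so the trajectory of $F\circ\gamma$ lies in $\mathcal{T}_\infty$; moreover $F\circ\gamma$ is again a generic timelike curve, since conformal transformations preserve the causal character and leave $\sigma_\gamma$ invariant.

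For the converse, suppose the trajectory of $F\circ\gamma$ lies in $\mathcal{T}_\infty$ for some $F\in\mathrm{A}^{\uparrow}_+(2,3)$. Since $F$ is an isometry, this says $\langle\gamma(t),Z\rangle\equiv 0$ for the unit spacelike vector $Z:=F^{-1}(P^o_2)$. Differentiating twice shows that $Z$ is orthogonal to $\gamma,\gamma',\gamma''$, hence $Z\in\mathcal{N}|_t=[M_2,M_3]|_t$ for every $t$; write $Z=\alpha M_2+\beta M_3$ with $\alpha,\beta$ smooth. Differentiating this relation and substituting the Frenet equations gives
\begin{equation*}
0=\alpha M_0+(\alpha'-\beta k)M_2+(\alpha k+\beta')M_3,
\end{equation*}
so linear independence of $M_0,M_2,M_3$ forces $\alpha\equiv 0$; then $Z=\beta M_3$ with $\beta^2=\langle Z,Z\rangle=1$, whence $\beta$ is a nonzero constant and the vanishing of the coefficient of $M_2$ yields $\beta k\equiv 0$, i.e. $k\equiv 0$.

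The two Frenet computations are routine; the step I expect to be the real obstacle is the transitivity of $\mathrm{A}^{\uparrow}_+(2,3)$ on the set $\{Z:\langle Z,Z\rangle=1\}$ of unit spacelike vectors — equivalently, the fact that all adS‑walls are conformally congruent. I would settle this by an orbit–connectedness argument. The stabilizer of $P^o_2$ in $\mathrm{A}^{\uparrow}_+(2,3)$ acts faithfully by isometries on the $(2,2)$‑plane $(P^o_2)^{\perp}$ and, via this restriction, embeds as an open subgroup of $\mathrm{O}(2,2)$, so it is $6$‑dimensional; hence every $\mathrm{A}^{\uparrow}_+(2,3)$‑orbit on the $4$‑dimensional hyperboloid $\{Z:\langle Z,Z\rangle=1\}$ is open. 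But this hyperboloid deformation‑retracts onto the unit sphere of $\mathbb{V}^3_+$ and is therefore connected, so it consists of a single orbit. This is the only point where the structure of the conformal group, rather than the Frenet formalism, enters the argument.
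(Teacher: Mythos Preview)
Your proof is correct and follows essentially the same route as the paper's: both arguments hinge on the observation that $k\equiv 0$ is equivalent to $M_3$ being constant (via $M_3'=-kM_2$), together with the transitivity of $\mathrm{A}^{\uparrow}_+(2,3)$ on unit spacelike vectors to move $M_3$ onto $P^o_2$. Your version is in fact more complete, since you spell out the converse direction (showing that $\gamma\perp Z$ forces $Z=\pm M_3$ and hence $k\equiv0$) and justify the transitivity by an orbit--connectedness argument, both of which the paper leaves implicit.
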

\begin{proof}
The conformal curvature $k$ is identically zero if and only if $M_3$, the third vector of the canonical moving frame, is constant. This is equivalent to the existence of a restricted conformal transformation $F\in \mathrm{A}^{\uparrow}_+(2,2)$, such that the null ray $F\circ \gamma(u)$ is orthogonal to the third vector of the standard Poincar\'e basis of $\R^{2,3}$, i.e., if and only if $F\circ \gamma(u)\in \mathcal{T}_{\infty}$.
\end{proof}

\begin{figure}[ht]
\begin{center}
\includegraphics[height=6.2cm,width=6.2cm]{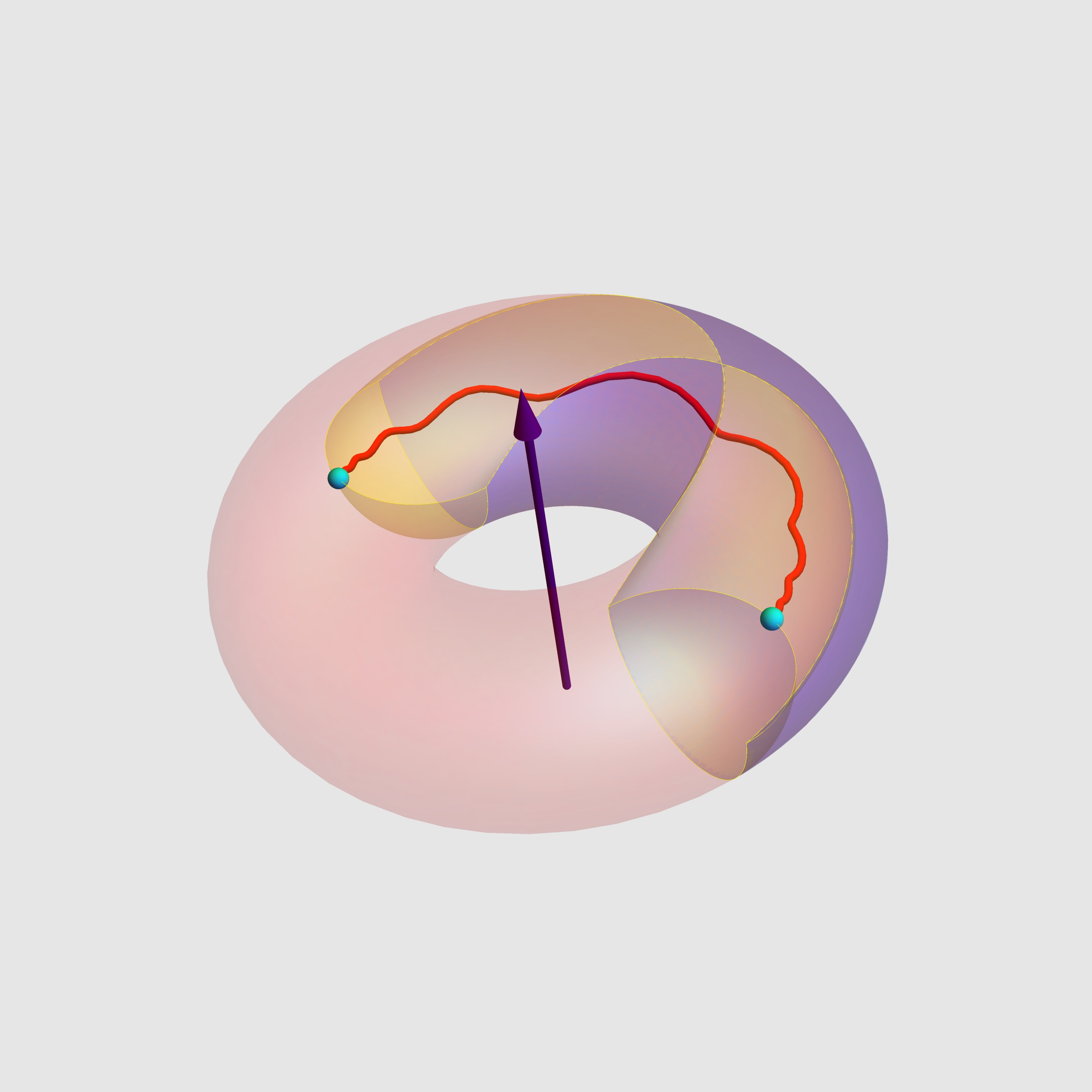}
\includegraphics[height=6.2cm,width=6.2cm]{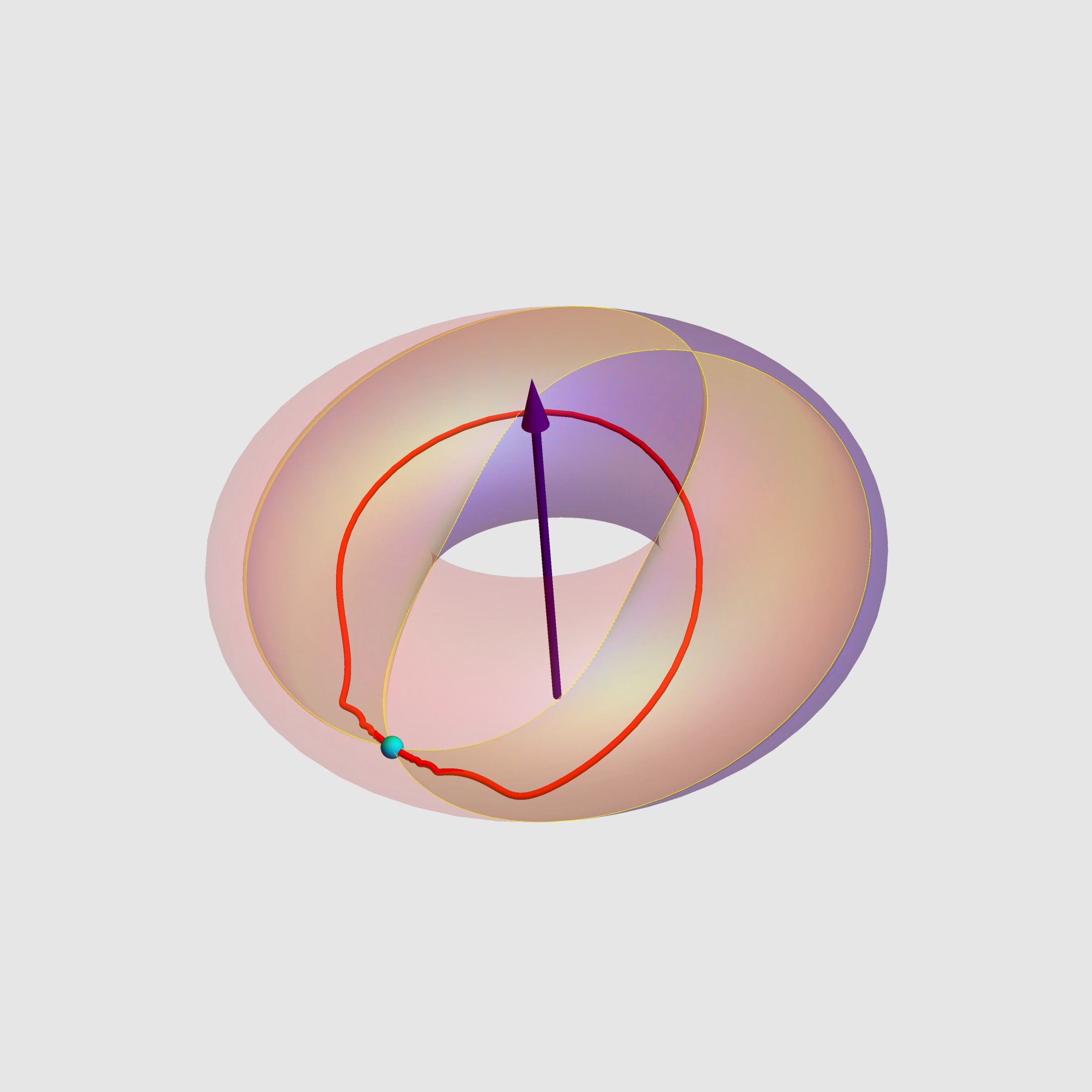}
\caption{Curves with constant curvatures of the Classes $C_{4}$ (on the left) and $C_{5}$ (on the right), with parameters $a=0.99$, $b=7.01722$ and $b=1.001$, respectively.}\label{FIG4C4C5}
\end{center}
\end{figure}

\section{Timelike curves with constant conformal curvatures}\label{s:constant curvatures}

If we act on a generic timelike curve with a time-preserving and orientation-reversing conformal
transformation, the first curvature
changes sign. Therefore, without loss of generality, we may assume $k\geq 0$. Consequently,
the conformal equivalence classes of generic timelike curves with constant conformal curvatures
can be parametrized in terms of two real parameters $(k,h)$ belonging to the half plane $\mathbb{R}^2_+=\{(k,h)\in \R^2 \,:\, k\ge 0\}$.
In principle, since any timelike curve with constant curvatures is an orbit of a 1-parameter subgroup
of $A^{\uparrow}_+(2,3)$ generated by the curvature operator $\mathfrak K_\gamma$,
the explicit determination of timelike curves with constant curvatures can be reduced
to compute $\mathrm{Exp} (t\mathfrak K_\gamma)$. However, the parametrizations obtained in this way
lack in general a direct geometric interpretation suitable for the description of curve trajectories.
Some additional work is required. We classify the generic homogeneous curves with $k>0$ in terms of the stratification
of $\mathbb{R}^2_+$ determined by the orbit-type of $\mathcal{K}(k,h)$. There are nine classes, namely:
\begin{equation}\label{strata}
\begin{cases}
\mathrm{C}_1=\{(k,h)\in \mathbb{R}^2_+ \,:\, -2<k^2-2h<2, k>0\},\\
\mathrm{C}_{2.i}=\{(k,h)\in \mathbb{R}^2_+ \,:\, k^2-2h<-2, k>0\},\\
\mathrm{C}_{2.ii}=\{(k,h)\in \mathbb{R}^2_+ \,:\, 2<k^2-2h, h>-\frac{1}{2k^2}, k>1\},\\
\mathrm{C}_{3}=\{(k,h)\in \mathbb{R}^2_+ \,:\, 0<k<1, -\frac{1}{2k^2}<h<\frac{k^2-2}{2}\},\\
\mathrm{C}_{4}=\{(k,h)\in \mathbb{R}^2_+ \,:\, h<-\frac{1}{2k^2}, k>0\}
\end{cases}
\end{equation}
and
\begin{equation}\label{strata2}
\begin{cases}
\mathrm{C}_{5}=\{(k,h)\in \mathbb{R}^2_+ \,:\, k>1,h=-\frac{1}{2k^2}\},\\
\mathrm{C}_{6}=\{(k,h)\in \mathbb{R}^2_+ \,:\, 0<k<1,h=-\frac{1}{2k^2}\},\\
 \mathrm{C}_{7.i}=\{(k,h)\in \mathbb{R}^2_+ \,:\, k^2-2h=-2, k>0\},\\
\mathrm{C}_{7.ii}=\{(k,h)\in \mathbb{R}^2_+ \,:\, k^2-2h=2, k>0\},\\
\mathrm{C}_{8}=\{(k,h)\in \mathbb{R}^2_+ \,:\, k^2-2h=2,0<k<1\},\\
\mathrm{C}_{9}=\{(1,-1/2)\}.
\end{cases}
\end{equation}
The curvature operators of homogeneous curves of the first four classes are regular elements of the Lie algebra $\mathfrak{a}(2,3)$. In the other cases, the curvature operators are exceptional elements of $\mathfrak{a}(2,3)$. For this reason, the homogeneous curves of the first four classes are said {\em regular}, while those belonging to the last five classes are said {\em exceptional}. We now describe explicit parametrizations of homogeneous timelike curves in terms of elementary functions.
For the classes of regular curves, we have the following.

\begin{figure}[ht]
\begin{center}
\includegraphics[height=6.2cm,width=6.2cm]{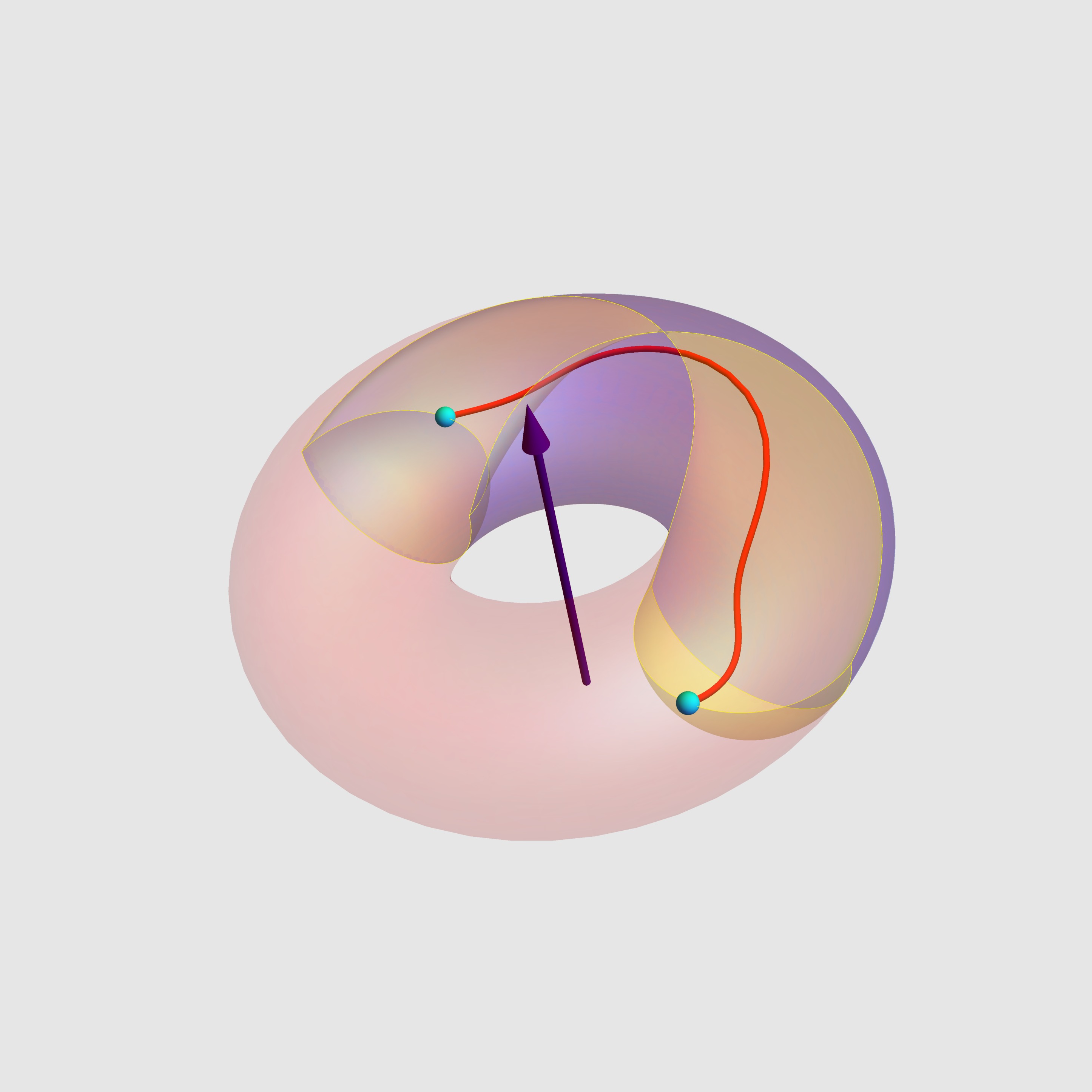}
\includegraphics[height=6.2cm,width=6.2cm]{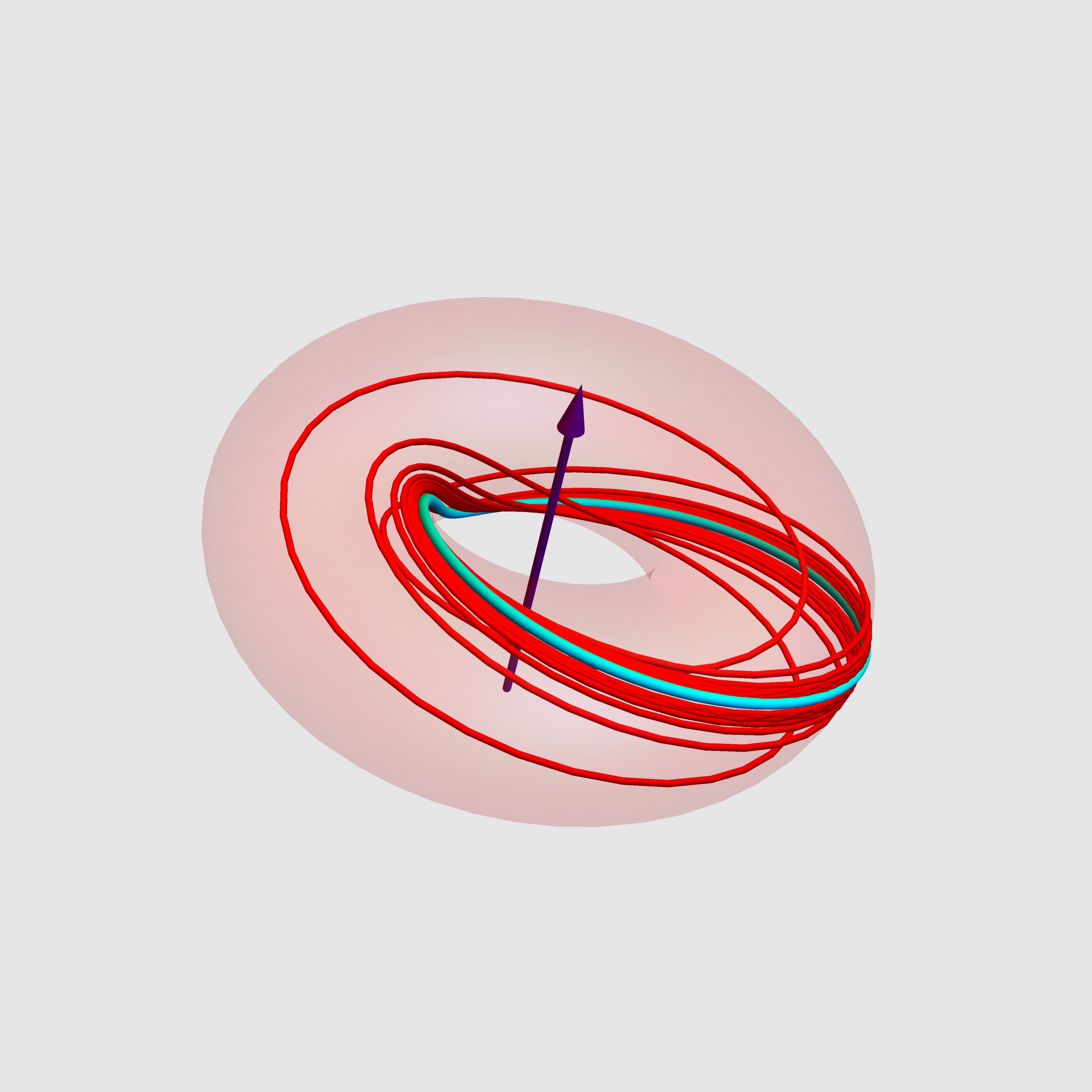}
\caption{Curves with constant curvatures of the Classes $C_{6}$ (on the left) and $C_{7i}$ (on the right), with parameters $b=0.999$ and $b=1.5$, respectively.}\label{FIG4C6C7i}
\end{center}
\end{figure}

\begin{thm}\label{thm:reg-hom-cur}
Let $\gamma$ be a regular homogeneous curve. Then the following hold true:

\begin{itemize}
\item If $\gamma\in \mathrm{C}_1$, there exists a unique Lie basis $(L_0,\dots,L_4)$ and a unique $(a,b)$ belonging to the open domain $\Omega_1 = \{(a,b)\in (-1,1)\times \R \,:\, b>0,\, (1+b+a(b-1))(b-1+a(1+b))<0\}$
such that, after a change of the independent variable, the curve is parametrized by
$\R\ni t\mapsto [\gamma_{(a,b)}(t)]$, where
\begin{equation}\label{C1}
 \gamma_{(a,b)}(t) =x_0(t)L_0 + x_1(t)L_1-
  \sqrt{2(1-a^2)}L_2+x_3(t)L_3+x_4(t)L_4
  \end{equation}
and
\[
\begin{array}{cc}
x_0(t)=e^{b(t-\frac{\pi}{4})}(\cos(t)+a\sin(t)), & x_1(t)=- e^{-b(t-\frac{\pi}{4})}(\sin(t)+a\cos(t)),\\
x_3(t)=e^{b(t-\frac{\pi}{4})}(a\cos(t)-\sin(t)), &x_4(t) = e^{-b(t-\frac{\pi}{4})}(\cos(t)-a\sin(t)).
\end{array}
\]

%\noindent $\bullet$
\item If $\gamma\in \mathrm{C}_{2.i}$, there exists a unique Poincar\'e basis $(P_0,\dots,P_4)$ and a unique element $(a,b)$ of $\Omega_{2.i}= (0,1)\times (0,1)$ such that, after a change of variable, the curve is parametrized by $\R\ni t\mapsto [\gamma_{(a,b)}(t)]$, where
\begin{equation}\label{C2i}
 \gamma_{(a,b)}(t) =\cos(t) P_0+\sin(t)P_1 +aP_2+\sqrt{1-a^2}\cos(bt)P_3 - \sqrt{1-a^2}\sin(bt)P_4.
   \end{equation}

%\noindent $\bullet$
\item If $\gamma\in \mathrm{C}_{2.ii}$, there exists a unique Poincar\'e basis $(P_0,\dots,P_4)$ and a unique $(a,b)$ belonging to open domain $\Omega_{2.ii}=\{(a,b)\in \R^2\,:\, a\in (0,1),\,b>1,\,(1-a^2)b^2<1\}$ such that, after a change of variable, the curve is parametrized by $\R\ni t\mapsto [\gamma_{(a,b)}(t)]$, where
\begin{equation}\label{C2ii}
\gamma_{(a,b)}(t)=\cos(t)P_0+\sin(t)P_1 -aP_2+\sqrt{1-a^2}\cos(bt)P_3 + \sqrt{1-a^2}\sin(bt)P_4.
\end{equation}

%\noindent $\bullet$
\item If $\gamma$ belongs to $\mathrm{C}_{3}$, there exists a unique Lie basis $(L_0,\dots,L_4)$ and a unique element $(a,b)$ of the domain $\Omega_3=\{(a,b)\in \R^2\,:\,a>1/4,\, b>1,\, 4a(b^2-1)-(b^2+1)>0\}$ such that, after a change of variable, the curve is parametrized by $\R\ni t\mapsto [\gamma_{(a,b)}(t)]$, where
\begin{equation}\label{C3}
\gamma_{(a,b)}(t) =\frac{1+4a}{4}e^{t}L_0+e^{-bt}L_1+L_2+\frac{1-4a}{4}e^{bt}L_3+e^{-t}L_4.
\end{equation}

%\noindent $\bullet$
\item If $\gamma\in \mathrm{C}_4$, there exists a unique Poincar\'e basis $(P_0,\dots,P_4)$ and a unique  $(a,b)$ in the open domain $\Omega_4=\{(a,b)\in \R^2\,:\, a\in (0,1),\, b>0,\, a^2+(a^2-1)b^2>0 \}$ such that, after a change of variable, the curve is parametrized by $\R\ni t\mapsto [\gamma_{(a,b)}(t)]$, where
\begin{equation}\label{C4}
 \gamma_{(a,b)}(t) = P_0+a\sinh(t)P_1-a\cosh(t)P_2+\sqrt{1-a^2}\cos(bt)P_3+\sqrt{1-a^2}\sin(bt)P_4.
  \end{equation}

  \end{itemize}
\end{thm}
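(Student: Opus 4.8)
The plan is to exploit the fact, recalled just before the statement, that a generic timelike curve with constant conformal curvatures $(k,h)$ is the orbit $u\mapsto \mathrm{Exp}(u\,\mathcal{K}(h,k))\cdot x_0$ of the one-parameter subgroup generated by the curvature operator. So the whole proof reduces to (i) computing, case by case over the four regular strata $\mathrm{C}_1$, $\mathrm{C}_{2.i}$, $\mathrm{C}_{2.ii}$, $\mathrm{C}_3$, the normal form of the endomorphism $\mathcal{K}(h,k)\in\mathfrak{a}(2,3)$, (ii) integrating the resulting linear flow on $\R^{2,3}$, and (iii) identifying in each case the preferred Lie- or Poincar\'e basis with respect to which the flow takes the diagonal/block-diagonal shape appearing in \eqref{C1}--\eqref{C4}. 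First I would write out $\mathcal{K}(h,k)=M^0_2-M^4_1-kM^2_3-hM^0_1$ as an explicit $5\times5$ matrix in the standard M\"obius basis, compute its characteristic polynomial, and check that for $(k,h)$ in the four regular strata the eigenvalue configuration is exactly the one that makes $\mathcal{K}(h,k)$ a \emph{regular} element of $\mathfrak{a}(2,3)$ (distinct eigenvalues / semisimple with maximal-dimension centralizer). The sign of the discriminant of the characteristic polynomial is what distinguishes the classes; one checks that the cubic-type factor $\lambda^2$-terms reproduce exactly the inequalities $-2<k^2-2h<2$, $k^2-2h<-2$, $2<k^2-2h$, etc., in \eqref{strata}.

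The second step is the diagonalization over $\C$. In $\mathrm{C}_1$ I expect a pair of complex eigenvalues $b\pm i$ and $-b\pm i$ together with a single real (or purely imaginary on the relevant subspace) eigenvalue giving the constant coefficient $-\sqrt{2(1-a^2)}$ on $L_2$; this matches the mixed exponential–trigonometric coordinates $e^{\pm b(t-\pi/4)}(\cos t+a\sin t)$ etc.\ in \eqref{C1}. In $\mathrm{C}_{2.i}$ and $\mathrm{C}_{2.ii}$ the operator is elliptic (all eigenvalues purely imaginary, $\pm i$ and $\pm ib$), so the flow is a product of two rotations with incommensurable-in-general frequencies, whence $\cos t, \sin t, \cos bt, \sin bt$ against a Poincar\'e basis; the two subcases differ by the sign of the $P_2$-component and the orientation of the second rotation, reflecting the two chambers of the stratum $2<k^2-2h$ separated by the auxiliary hypersurface $h=-1/(2k^2)$. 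In $\mathrm{C}_3$ the eigenvalues are real, $\pm1$ and $\pm b$, giving the hyperbolic orbit with $e^{\pm t}$ and $e^{\pm bt}$ in a Lie basis; in $\mathrm{C}_4$ one gets one real pair $\pm b$ (hyperbolic, hence $\sinh,\cosh$) and one imaginary pair (rotation, hence $\cos bt,\sin bt$), which is precisely \eqref{C4}. In each case the parameters $(a,b)$ are read off as the ratio of eigenvalue moduli together with the ``aspect'' of the invariant isotropic/definite splitting; the open domains $\Omega_\bullet$ are exactly the loci where the resulting vector lies on the null cone (so that $[\gamma_{(a,b)}(t)]\in\mathcal{M}^{1,2}$) and the curve is genuinely timelike and vertex-free — i.e.\ one translates the two defining scalar products $\langle\gamma,\gamma\rangle=0$ and the genericity condition into the stated inequalities. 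The uniqueness of the basis and of $(a,b)$ follows from the fact that a regular element of $\mathfrak{a}(2,3)$ determines its eigenspace decomposition, hence the adapted basis, up to the finite stabilizer, which the orientation and time-orientation conditions pin down completely.

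The main obstacle is bookkeeping, not conceptual: matching the integrated flow to the \emph{specific} basis conventions — Lie vs.\ Poincar\'e, the shifts $t\mapsto t-\pi/4$, the particular linear combinations $\tfrac{1+4a}{4}$, $\tfrac{1-4a}{4}$ in \eqref{C3} — requires choosing, in each stratum, the right real form of the complex eigenbasis so that $G_{\mathcal B}$ takes the prescribed Lie or Poincar\'e shape, and then absorbing the remaining freedom (a one-parameter rescaling within the isotropic planes, a rotation, a time shift) into the normalization $u\to u+c$ allowed by the theorem. Concretely, the delicate point in, say, $\mathrm{C}_1$ is to verify that the real and imaginary parts of a complex null eigenvector, suitably scaled, together with the real eigenvector, form a Lie basis (positively oriented, correct Gram matrix $-(E^0_4+E^4_0)-(E^1_3+E^3_1)+E^2_2$, future-oriented isotropic bivector $L_0\wedge L_1$), and that the resulting coordinate functions are exactly the $x_i(t)$ displayed. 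I would carry this out one stratum at a time, in the order $\mathrm{C}_3$ (purely real, easiest), then $\mathrm{C}_{2.i}$, $\mathrm{C}_{2.ii}$ (purely imaginary), then $\mathrm{C}_4$ (mixed real/imaginary), and finally $\mathrm{C}_1$ (genuinely complex eigenvalues $b\pm i$), each time finishing with the verification that $\langle\gamma_{(a,b)},\gamma_{(a,b)}\rangle\equiv0$ and that the conformal curvatures recomputed from the canonical frame of Theorem \ref{thm:canonical-frame} return the given $(k,h)$.
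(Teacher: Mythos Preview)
Your strategy is sound and would work, but it runs in the opposite direction from the paper's proof. You propose to start from $(k,h)$ in a given stratum, diagonalize $\mathcal{K}(h,k)$, integrate the one-parameter group, and then massage the eigenbasis into the prescribed Lie or Poincar\'e form to \emph{derive} the parametrizations \eqref{C1}--\eqref{C4}. The paper instead takes the parametrizations $\gamma_{(a,b)}$ as given data and \emph{verifies} that they do the job: it writes down an explicit recipe (the vector field $\Gamma_4$, the scalar $R$, and then $M_0,\dots,M_4$) that builds the canonical conformal frame directly from $\gamma$ and its derivatives, extracts closed formulas $k_{(a,b)},h_{(a,b)}$ for the conformal curvatures, and then checks case by case that the map $(a,b)\mapsto(k_{(a,b)},h_{(a,b)})$ is a diffeomorphism from $\Omega_\bullet$ onto the stratum $\mathrm{C}_\bullet$. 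Your route explains \emph{where} the explicit curves come from and is conceptually more transparent; its cost is exactly the bookkeeping you flag --- pinning down the correct real form of the eigenbasis so that the Gram matrix is the right one and the orientation/time-orientation are correct, and then inverting the eigenvalue data to recover the inequalities defining $\Omega_\bullet$. The paper's route sidesteps all of that by treating the formulas as an ansatz, so the only nontrivial step left is the bijectivity of $(a,b)\mapsto(k,h)$, which is a two-variable calculus exercise. One small correction to your eigenvalue bookkeeping: in each case $\mathcal{K}(h,k)$ is $5\times5$ and has $0$ as an eigenvalue (this is the constant component, on $L_2$ or $P_2$ or $P_0$ in the displayed formulas), so for instance in $\mathrm{C}_4$ the spectrum after the affine reparametrization is $\{0,\pm1,\pm ib\}$, not just a real pair and an imaginary pair.
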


\begin{proof}
All the curves in the statement are orbits of a 1-parameter group of conformal transformations,
so that the scalar products $\langle \gamma^{(n)},\gamma^{(m)}\rangle$ are constants. Let $c=\sqrt{-\langle \gamma',\gamma'\rangle}$ and consider the nowhere vanishing vector field along $\gamma$ defined by
\[
  \Gamma_4=-\frac{1}{c^2}\gamma''+\frac{\langle \gamma'',\gamma''\rangle}{2c^4} \gamma.
   \]
Next, consider the constant
\[
 R=\sqrt[4]{\frac{\langle \Gamma_4',\Gamma_4'\rangle}{c^2}+\frac{\langle \Gamma_4',\gamma'\rangle \langle \gamma'',\gamma''\rangle}{c^6}+\frac{\langle \gamma'',\gamma''\rangle^2 \langle \gamma',\gamma'\rangle}{4c^{10}}}
  \]
and put
\[
 M_0=R\gamma,\quad M_1=\frac{1}{c}\gamma',\quad M_2=\frac{1}{R^2c}\Gamma_4'+\frac{1}{2c^4R^2}\gamma',\quad M_4=\frac{1}{R}\Gamma_4.
 \]
It is now a computational matter to check that $\langle M_i,M_j\rangle = \mathtt{m}_{ij}$, $i,j=0,1,2,4$.
Then, if we let $M_3$ be the unique spacelike vector field along $\gamma$, such that $\mathbf{M}|_t$ $=$ $(M_0,M_1,M_2,M_3,M_4)|_t$ is a M\"obius basis, for each $t\in \R$, we obtain a
conformal moving frame satisfying $\mathbf{M}'=\upsilon\cdot \mathbf{M}\mathcal{K}(k,h)$, where
\begin{equation}\label{CR}
\upsilon=Rc,\quad k=\frac{1}{Rc}\sqrt{\langle M_2'-R^2c\gamma, M_2'-R^2c\gamma\rangle},\quad h=-\frac{\langle \gamma'',\gamma''\rangle}{2c^4R^2}.
\end{equation}
This shows that $\mathbf{M}$ is the canonical conformal frame along $\gamma$ and, in addition, that the conformal curvatures of $\gamma$ are the constants $k$ and $h$ in \eqref{CR}.

Let $(a,b)$ be an element of $\Omega_1$ and let $\gamma_{(a,b)}$ be given as in (\ref{C1}).
From (\ref{CR}), we compute the following expressions
for the conformal curvatures of $\gamma_{(a,b)}$,
\[
 k_{(a,b)}=\sqrt{\frac{(1+b^2)(1-a^2)}{2b(1+a^2)}},\quad h_{(a,b)}=\frac{1-6b^2+b^4+8ab(b^2-1)-a^2(b^4-6b^2+1)}{4b(1+a^2)(1+b^2)}.
 \]
The map $(a,b)\mapsto (k_{(a,b)},h_{(a,b)})$ is a diffeomorphism of $\Omega_1$ onto $\mathrm{C}_1$, and hence each homogeneous curve of the first class is parametrized by $\gamma_{(a,b)}$, for a unique $(a,b)\in \Omega_1$.

Let $(a,b)\in \Omega_{2.i}$ and $\gamma_{(a,b)}$ be given as in (\ref{C2i}). Then, using (\ref{CR}), we obtain
\[
 k_{(a,b)}=\frac{a\sqrt{b}}{\sqrt[4]{(1-b^2)^2(1-a^2)}},\quad h_{(a,b)}=\frac{1-b^4(1-a^2)}{2b(1-b^2)\sqrt{1-a^2}}.
 \]
Since the map $(a,b)\mapsto (k_{(a,b)},h_{(a,b)})$ is a diffeomorphism of $\Omega_{2.i}$ onto $\mathrm{C}_{2.i}$,
 we deduce that each homogeneous curve of the first type of the second class is parametrized by $\gamma_{(a,b)}$,
 for a unique $(a,b)\in \Omega_{2.i}$.

Let $(a,b)$ be an element of $\Omega_{2.ii}$ and $\gamma_{(a,b)}$ be given as in (\ref{C2ii}). Then, using (\ref{CR}), we obtain
\begin{equation}\label{CurvC2}
 k_{(a,b)}=\frac{a\sqrt{b}}{\sqrt[4]{(b^2-1)^2(1-a^2)}},\quad h_{(a,b)}=\frac{1-b^4(1-a^2)}{2b(b^2-1)\sqrt{1-a^2}}.
 \end{equation}
The map $(a,b)\mapsto (k_{(a,b)},h_{(a,b)})$ is a diffeomorphism of $\Omega_{2.ii}$ onto $\mathrm{C}_{2.ii}$, so that every homogeneous curve of the second type of the second class is parametrized by $\gamma_{(a,b)}$, for a unique $(a,b)\in \Omega_{2.ii}$.

If $(a,b)\in \Omega_{3}$ the conformal curvatures of $\gamma_{(a,b)}$ are given by
\[
 k_{(a,b)}=\frac{\sqrt{2b}}{\sqrt[4]{(b^2-1)^2(16a^2-1)}},\quad h_{(a,b)}=\frac{(4a-1)b^4-4a-1}{2b(b^2-1)\sqrt{16a^2-1}}.
 \]
The map $(a,b)\mapsto (k_{(a,b)},h_{(a,b)})$ is a diffeomorphism of $\Omega_{3}$ onto $\mathrm{C}_{3}$ and hence each homogeneous curve of the third class is parametrized by $\gamma_{(a,b)}$, for a unique $(a,b)\in \Omega_{3}$.

If $(a,b)\in \Omega_{4}$, the conformal curvatures of $\gamma_{(a,b)}$ are given by
\[
  k_{(a,b)}=\frac{\sqrt{b}}{\sqrt[4]{a^2(1-a^2)(1+b^2)^2}},\quad h_{(a,b)}=\frac{a^2(b^4-1)-b^4}{2ab\sqrt{1-a^2}(1+b^2)}.
   \]
The map $(a,b)\mapsto (k_{(a,b)},h_{(a,b)})$ is a diffeomorphism of $\Omega_{4}$ onto $\mathrm{C}_{4}$, so that each homogeneous curve of the third class is parametrized by $\gamma_{(a,b)}$, for a unique $(a,b)\in \Omega_{4}$.
\end{proof}

\begin{remark}
The curves of the first class can be trapped in an adS-chamber, but not in a Minkowski or a dS-chamber.
They have two distinct asymptotic closed null curves lying in an adS-wall (see Figure \ref{FIG4C1C3}). The curves of the second class may be trapped in an adS-chamber. If $b$ is a rational number $m/n$, the curves are closed torus knots of type $(n,m)$. Otherwise they are ``irrational'' lines of a homogeneous torus. They cannot be trapped in a Minkowski or a dS-chamber. The integer $m$ is the liking number of the toroidal projection with the $z$-axis, while $n$ is the linking number with the centerline of the toroid (see Figure \ref{FIG4C2iC2ii}).
The homogeneous curves of the third class can be trapped in the intersection of three chambers of different types, anti-de Sitter, de Sitter, or Minkowski. They have two limit points lying on the intersection of the walls of the three chambers trapping the curve (see Figure \ref{FIG4C1C3}). Also the homogeneous curves of the fourth class are trapped in the intersection of three chambers of different types. They have two limit points belonging to the adS-chamber. One of them lies in the intersection of the Minkowski-wall with one of the dS-walls, while the other lies in the intersection of the Minkowski-wall with the other dS-wall (see Figure \ref{FIG4C4C5}).
\end{remark}

%The next result is about the classes of exceptional curves.

Arguing as in the proof of Theorem \ref{thm:reg-hom-cur}, we can prove the following result
for the classes of exceptional curves.

\begin{thm}\label{thm:exc-hom-cur}
Let $\gamma$ be an exceptional homogeneous curve. Then the following hold true:

\begin{itemize}

\item If $\gamma\in \mathrm{C}_5$, there is a unique M\"obius basis  $(M_0,\dots,M_4)$ and a unique $b>1$
such that, after a change of variable, the curve is parameterized by $\R\ni t\mapsto[\gamma_{(b)}(t)]$, where
\begin{equation}\label{C5}
  \gamma_{(b)}(t) = \frac{1}{2}(1-b^2t^2)M_0+btM_1+\cos(t)M_2-\sin(t)M_3+M_4.
   \end{equation}

\item If $\gamma\in \mathrm{C}_6$, there exists a unique M\"obius basis $(M_0,\dots,M_4)$ and a unique positive $b<1$
such that, after a change of variable, the curve is parameterized by $\R\ni t\mapsto[\gamma_{(b)}(t)]$, where
\begin{equation}\label{C6}
  \gamma_{(b)}(t) = \frac{1}{2}(1+b^2t^2)M_0+\sinh(t)M_1+\cosh(t)M_2++btM_3++M_.
    \end{equation}

\item If $\gamma\in \mathrm{C}_{7.i}$, there exists a unique Lie basis  $(L_0,\dots,L_4)$ and a unique $b>1$
such that, after a change of variable, the curve is parameterized by $\R\ni t\mapsto[\gamma_{(b)}(t)]$, where
\begin{equation}\label{C71}
 \gamma_{(b)}(t) = x_0(t)L_0-
  x_1(t)L_1+\sqrt{\frac{2(b^2-1)}{b}}L_2-\sin(t)L_3+\cos(t)L_4
  \end{equation}
and
$x_0(t)=\frac{(b^2-1)\cos(t)-t\sin(t)}{b}$, $x_1(t)=\frac{t\cos(t)+(b^2-1)\sin(t)}{b}$.

\item If $\gamma\in \mathrm{C}_{7.ii}$, there exists a unique Lie basis  $(L_0,\dots,L_4)$ and a unique $b>1$
such that, after a change of variable, the curve is parameterized by $\R\ni t\mapsto[\gamma_{(b)}(t)]$, where
\begin{equation}\label{C72}
 \gamma_{(b)}(t) = x_0(t)L_0-
   b\sin(t)L_1-\sqrt{2b(1+b^2)}L_2+x_3(t)L_3+b\cos(t)L_4
     \end{equation}
and
$x_0(t)=(1+b^2)\cos(t)+t\sin(t)$, $x_3(t)=t\cos(t)-(1+b^2)\sin(t)$.

\item If $\gamma\in \mathrm{C}_{8}$, there exists a unique Lie basis  $(L_0,\dots,L_4)$ and a unique $b\in(0,1)$
such that, after a change of variable, the curve is parameterized by $\R\ni t\mapsto[\gamma_{(b)}(t)]$, where
\begin{equation}\label{C8}
 \gamma_{(b)}(t) = (b-t)e^tL_0-
  \sqrt{1-b}e^tL_1+2\sqrt[4]{b^2(1-b)}L_2-(b+t)e^{-t}L_3+\sqrt{1-b}e^{-t}L_4.
  \end{equation}

\item If $\gamma\in \mathrm{C}_{9}$, there exists a unique Lie basis  $(L_0,\dots,L_4)$
such that, after a change of variable, the curve is parameterized by $\R\ni t\mapsto [\widetilde\gamma(t)]$, where
\begin{equation}\label{C9}
 \widetilde{\gamma} (t) = \frac{t^4+6t^2-3}{24}L_0+
  \frac{t(t^2+3)}{6}L_1+\frac{1+t^2}{2}L_2+tL_3-L_4.
   \end{equation}
   \end{itemize}
\end{thm}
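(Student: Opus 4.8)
The plan is to follow verbatim the strategy of the proof of Theorem~\ref{thm:reg-hom-cur}. Recall that a homogeneous timelike curve $\gamma$ is, up to a change of the independent variable, the orbit $t\mapsto[\mathrm{Exp}(t\,\mathcal{K}_0)\,e_0]$ of a one-parameter subgroup of $\mathrm{A}^{\uparrow}_+(2,3)$, where $\mathcal{K}_0\in\mathfrak{a}(2,3)$ is a fixed matrix representative of the curvature operator $\mathfrak{K}_\gamma$ in a suitable basis and $e_0$ is the first vector of that basis. The curvature operator is conjugate to $\mathcal{K}(k,h)=M^0_2-M^4_1-k\,M^2_3-h\,M^0_1$, and on each of the five exceptional strata \eqref{strata2} the constraint on $(k,h)$ (namely $2hk^2+1=0$, or $k^2-2h=\pm 2$, or the single point $(1,-1/2)$) is precisely the condition that the characteristic polynomial of $\mathcal{K}(k,h)$ acquires a multiple root, so that $\mathcal{K}(k,h)$ becomes a non-semisimple (exceptional) element of $\mathfrak{a}(2,3)$; the point $(1,-1/2)$ lies on two of these branches simultaneously, which accounts for its maximal degeneracy. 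For each class I would first determine the Jordan type of $\mathcal{K}(k,h)$ and the induced decomposition of $\R^{2,3}$ into generalized eigenspaces, and then compute the signature of the metric restricted to the invariant subspaces: this causal data dictates whether the cleanest normal form for $\mathcal{K}_0$ lives in a M\"obius, a Poincar\'e, or a Lie basis, which is why the three basis types occur as they do in \eqref{C5}--\eqref{C9}.

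Concretely, for each stratum I would bring $\mathcal{K}_0$ into its adapted normal form, compute the matrix exponential $\mathrm{Exp}(t\,\mathcal{K}_0)$ explicitly (the nilpotent part producing polynomial factors in $t$, the semisimple part producing the trigonometric, hyperbolic, or exponential factors), apply it to $e_0$, and read off the components. The size of the largest nilpotent Jordan block governs the degree of the polynomial coefficients: this explains the degree-one factors $bt$ together with $\cos t,\sin t$ (resp. $\sinh t,\cosh t$) for $\mathrm{C}_5$ and $\mathrm{C}_6$, the degree-one polynomials multiplying trigonometric factors for $\mathrm{C}_{7.i}$ and $\mathrm{C}_{7.ii}$, the degree-one polynomials multiplying $e^{\pm t}$ for $\mathrm{C}_8$, and the quartic appearing for $\mathrm{C}_9=\{(1,-1/2)\}$, whose curvature operator is conjugate to a regular nilpotent element of $\mathfrak{a}(2,3)$, acting on $\R^{2,3}$ as a single size-$5$ Jordan block. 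This procedure produces exactly the candidate parametrizations $\gamma_{(b)}$ of \eqref{C5}--\eqref{C8} and $\widetilde\gamma$ of \eqref{C9}.

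It then remains to check that these candidates have the prescribed constant curvatures and that the distinguished basis and the parameter $b$ are unique. For the curvatures I would substitute each $\gamma_{(b)}$ into the algebraic formulas \eqref{CR}: since the curve is an orbit, all the scalar products $\langle\gamma^{(n)},\gamma^{(m)}\rangle$ are constants, so the vector fields $M_0,M_1,M_2,M_4$ defined there are genuine smooth frame fields, $\mathbf{M}$ is the canonical conformal frame of $\gamma_{(b)}$, and \eqref{CR} returns $k$ and $h$ as explicit functions of $b$; one then verifies that $b\mapsto(k(b),h(b))$ is a diffeomorphism onto the stratum in question (a one-variable computation in each of the cases $\mathrm{C}_5,\dots,\mathrm{C}_8$, and the identification of a single point for $\mathrm{C}_9$). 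Uniqueness is inherited from the uniqueness of the canonical conformal frame in Theorem~\ref{thm:canonical-frame}: the canonical frame of $\gamma_{(b)}$ is the orbit of the chosen M\"obius, Poincar\'e, or Lie basis under the one-parameter group, so by the last Proposition two such curves agree up to a restricted conformal transformation and a shift of the conformal parameter exactly when their canonical frames, hence the underlying bases, correspond; this forces the basis and $b$ to be unique.

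The main obstacle is the bookkeeping in the case analysis: on each exceptional stratum one must correctly pin down the Jordan structure of $\mathcal{K}(k,h)$ \emph{together} with the causal type of its invariant subspaces, so as to pick the right one of the three model bases and to obtain the clean closed forms of $\mathrm{Exp}(t\,\mathcal{K}_0)$, and then to match the admissible range of $b$ with the inequalities defining each $\mathrm{C}_j$. Once the normal forms and the ranges are settled, the verification via \eqref{CR} is entirely routine, exactly as in Theorem~\ref{thm:reg-hom-cur}.
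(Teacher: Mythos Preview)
Your proposal is correct and follows essentially the same approach as the paper, which simply says ``Arguing as in the proof of Theorem~\ref{thm:reg-hom-cur}'' and gives no further details. Your elaboration on the Jordan structure of $\mathcal{K}(k,h)$ and the resulting polynomial-times-exponential form of $\mathrm{Exp}(t\,\mathcal{K}_0)$ is the natural way to flesh out the exceptional (non-semisimple) cases, and the verification via \eqref{CR} together with the one-variable bijectivity check of $b\mapsto(k(b),h(b))$ is exactly parallel to the regular case.
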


\begin{figure}[ht]
\begin{center}
\includegraphics[height=6.2cm,width=6.2cm]{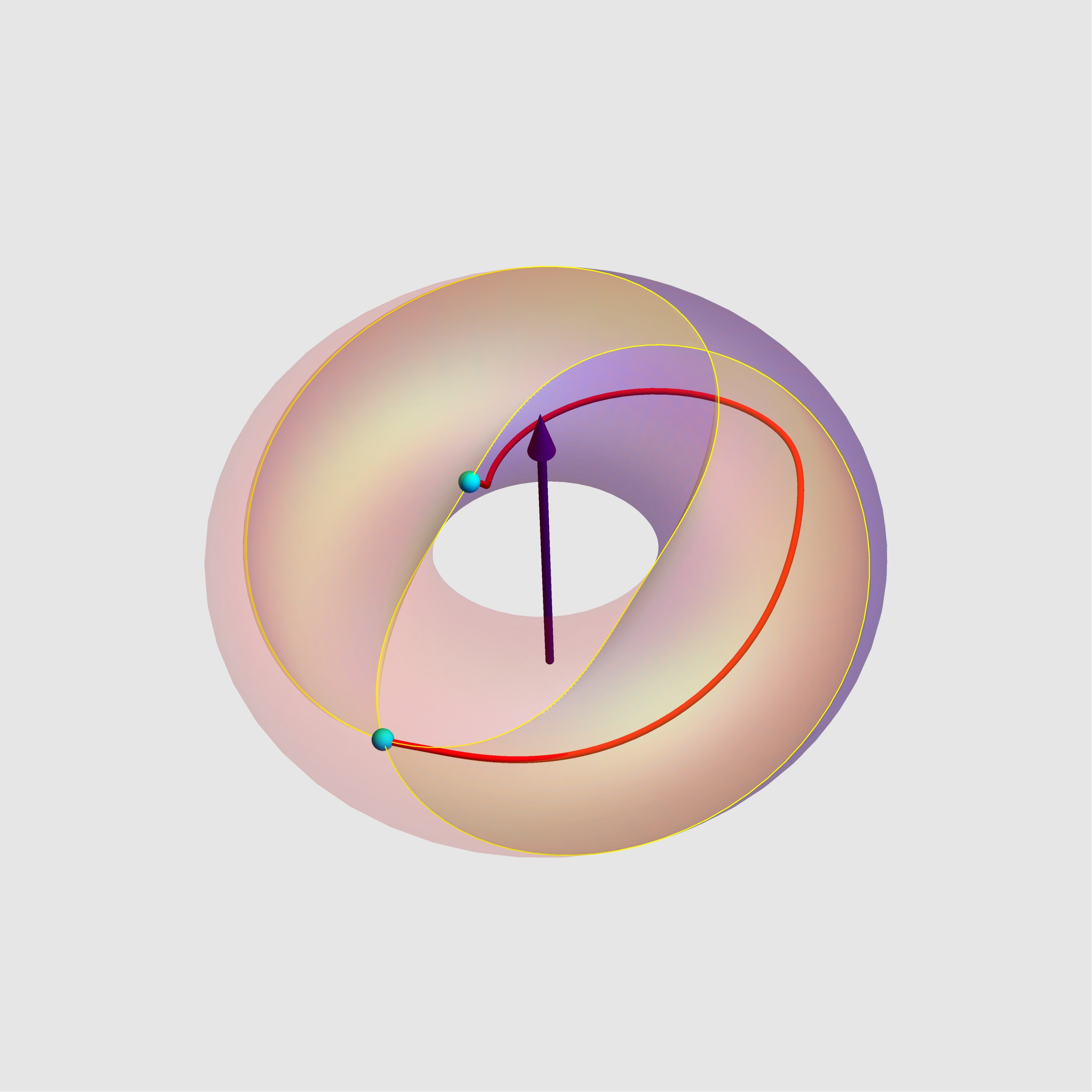}
\includegraphics[height=6.2cm,width=6.2cm]{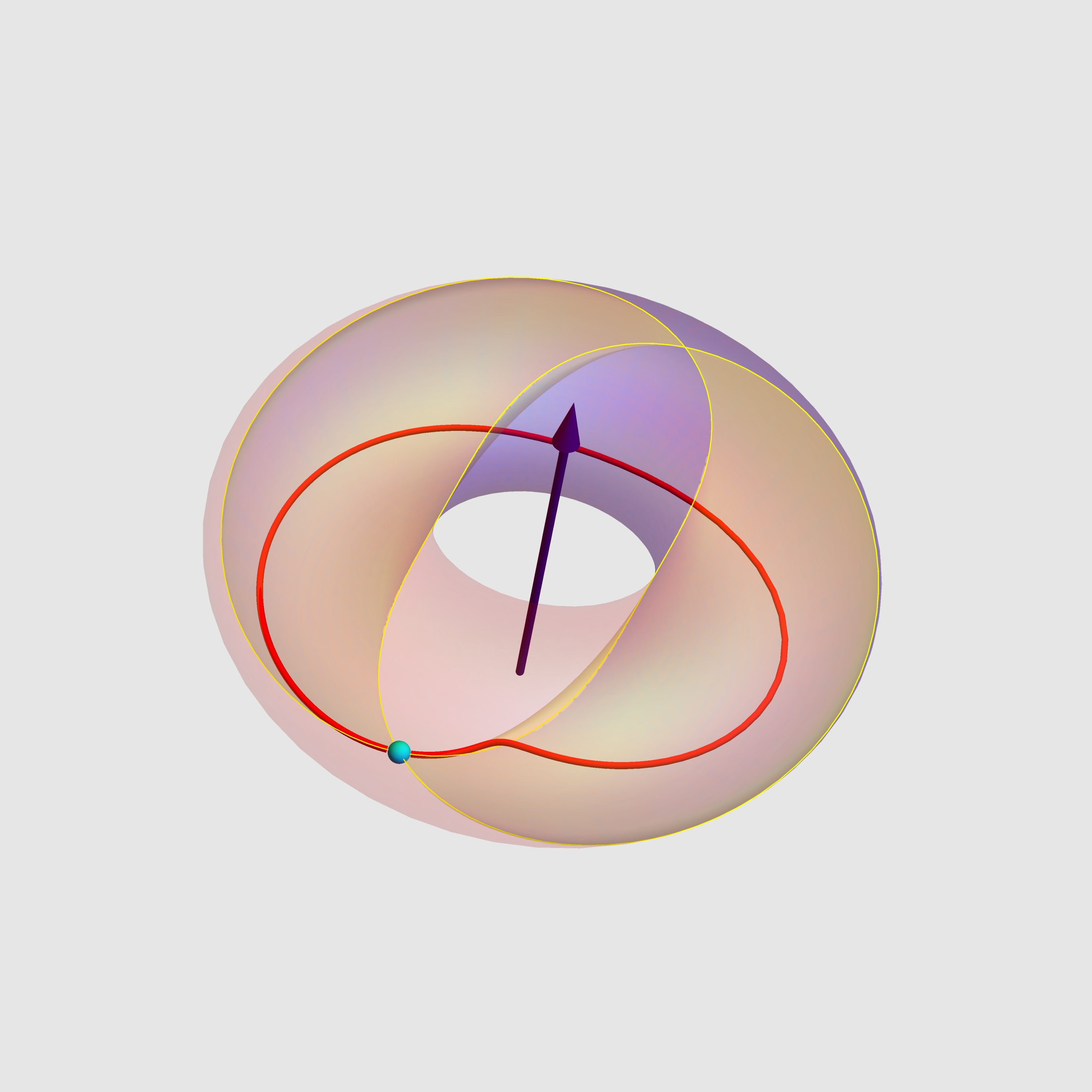}
\caption{Curves with constant curvatures of the Classes $C_{8}$ (on the left) and $C_{9}$ (on the right). }\label{FIG4C8C9}
\end{center}
\end{figure}

\begin{remark}
The curves of the fifth class are equivalent to timelike round helices of Minkowski space (with a timelike axis).
They are trapped in the intersection of a Min\-kow\-ski\--chamber with an adS-chamber. They spiral toward one of the two vertices of the Minkowski-wall and in the interior of the adS-chamber.
Such curves cannot be trapped in any dS-chamber (see Figure \ref{FIG4C4C5}).
The curves of the sixth class are equivalent to timelike ``hyperbolic'' helices of Minkowski space (with a spacelike axis). They are trapped in the intersection of three chamber of different types and have two distinct limit points lying in the interior of the adS-chamber and on the intersections of the Minokowski-wall with one of the dS-walls (see Figure \ref{FIG4C6C7i}). The curves of the seventh class can be trapped in an adS-chamber but not in a Minkowski or dS-chamber. They have one asymptotic closed null curve lying in the adS-wall (see Figure \ref{FIG4C6C7i}). The curves of the eighth class can be trapped in the intersections of an adS-chamber with a Minkowski-chamber but cannot be trapped in a dS-chamber. They have two distinct limit points lying in the adS-wall (see Figure \ref{FIG4C8C9}). One of the limit points is the vertex of the Minkowski wall. The curves of the ninth class are all equivalent to each other; they can be trapped in the intersections of an adS-chamber with a Minkowski-chamber but cannot be trapped in a dS-chamber. They have a unique limit point, one of the two vertices of the Minkowski wall. Such curves close up smoothly at infinity (see Figure \ref{FIG4C8C9}).
\end{remark}

\section{The conformal strain functional}\label{s:strainfunctional}

\subsection{The Euler-Lagrange equations}
A variation of a curve $\gamma:\mathrm{I}\subset \R\to \mathcal{E}^{1,2}$ is a map $\Gamma:\mathrm{I}\times (-\epsilon,\epsilon) \to \mathcal{E}^{1,2}$
such that $\Gamma(u,0)=\gamma(u)$, for every $u\in \mathrm{I}$. Given $t\in (-\epsilon,\epsilon)$, we write $\gamma_{[t]}$ to denote the curve $\mathrm{I}\ni u \mapsto \Gamma(u,t)$. If $\Gamma(u,t)=\gamma(u)$,
for every $u$ outside a closed subinterval $\mathrm{K} \subset \mathrm{I}$
and for every $t\in (-\epsilon,\epsilon)$, we say that the variation is compactly supported.
The smallest of all such closed subintervals is the support of the variation. If $\gamma$ is
generic and timelike and $\Gamma$ is compactly supported then, up to choosing $\epsilon$
sufficiently small, the curves $\gamma_{[t]}$ are generic and timelike, for every $t$.
In the latter case we say that the variation is admissible. Given a closed interval
$\mathrm{K}\subset \mathrm{I}$, the {\it total strain} of the timelike arc $\gamma(\mathrm{K})$
is the integral
$$
\mathcal{S}_{\mathrm{K}}(\gamma)=\int_{\mathrm{K}} \upsilon_{\gamma}(u)du.
$$
We say that $\gamma$ is a critical curve of the conformal strain functional if
$$
\frac{d}{dt}\left(\mathcal{S}_{\mathrm{K}}(\gamma_{[t]}\right)|_{t=0}=0,
$$
for every closed sub-interval $\mathrm{K}\subset \mathrm{I}$ and for every compactly supported variation $\gamma$ with support contained in $\mathrm{K}$.

\begin{thm}\label{t:EulerLagrange}
A generic timelike curve parametrized by conformal parameter is a critical curve of
the conformal strain functional if and only if
\begin{equation}\label{ELE1}
\ddot{k}-k^3+2kh=0,\quad \dot{h}-3k\dot{k}=0.
\end{equation}
\end{thm}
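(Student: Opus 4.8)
The plan is to carry out a standard moving-frame computation of the first variation, using the canonical conformal frame of Theorem~\ref{thm:canonical-frame} as the computational backbone. Let $\gamma$ be a generic timelike curve parametrized by conformal parameter $u$, with canonical frame $\mathbf M:I\to\mathcal M$ satisfying $\mathbf M'=\mathbf M\,\mathcal K(h,k)$. The first step is to encode an admissible compactly supported variation $\Gamma$ infinitesimally: the variation vector field along $\gamma$ can be written, via the canonical frame, in terms of coordinate functions, and the induced infinitesimal deformation of the frame is governed by a smooth $\mathfrak m(2,3)$-valued map $\mathfrak n$ solving the linearized structure equation $\partial_u\mathfrak n - \partial_t\mathfrak m|_{t=0} = [\mathfrak m,\mathfrak n]$ (the integrability/compatibility condition $d^2=0$ for the pulled-back Maurer--Cartan form). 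From this one reads off the variation $\delta(\upsilon_\gamma\,du)$ of the strain density. Since along the conformally parametrized curve $\upsilon_\gamma\equiv 1$, the total strain is just $\int_{\mathrm K} m^1_0\,du$ with $m^1_0$ the speed, and its variation is expressible through the components of $\mathfrak n$.

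Second, I would reduce the number of free components of $\mathfrak n$ by using the freedom to adapt the frame along each nearby curve $\gamma_{[t]}$: because the canonical frame is \emph{uniquely} determined, one may take $\mathbf M_{[t]}$ to be the canonical frame of $\gamma_{[t]}$, so that $\mathfrak m_{[t]}=\mathcal K(h_{[t]},k_{[t]})$ retains the normal form of Theorem~\ref{thm:canonical-frame} for every $t$. Then $\partial_t\mathfrak m|_{t=0}=\dot h\,$-part $+\,\dot k$-part has only two nonzero entries (the $M^0_1$ and $M^2_3$ slots), where here the overdot denotes $\partial_t|_{t=0}$. Plugging this and the normal form of $\mathfrak m$ into the linearized structure equation $\partial_u\mathfrak n = \partial_t\mathfrak m|_{t=0} + [\mathfrak n,\mathfrak m]$ and writing it out entry by entry yields a linear ODE system relating the components of $\mathfrak n$ to $\partial_t h$ and $\partial_t k$; solving the off-diagonal equations expresses all components of $\mathfrak n$ in terms of the single function $n := n^1_0$ (the infinitesimal change of speed, which is the genuine normal variation) and its derivatives, while the remaining diagonal/constraint equations give $\partial_t k$ and $\partial_t h$ as linear differential operators applied to $n$, with coefficients built from $h,k$ and their derivatives.

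Third, I would assemble the first variation:
\[
\frac{d}{dt}\mathcal S_{\mathrm K}(\gamma_{[t]})\Big|_{t=0}=\int_{\mathrm K}\partial_t\big(m^1_0\big)\big|_{t=0}\,du
=\int_{\mathrm K}\big(\text{linear differential operator in }n\big)\,du,
\]
then integrate by parts, discarding boundary terms because $n$ and its derivatives vanish near $\partial\mathrm K$ (compact support). The integrand becomes $n\cdot E(h,k)\,du$ for a differential expression $E(h,k)$; since $n$ is arbitrary, criticality is equivalent to $E(h,k)=0$. A careful bookkeeping of the structure equations should collapse $E=0$ into the two stated equations $\ddot k-k^3+2kh=0$ and $\dot h-3k\dot k=0$; note the second equation can be anticipated as the ``conservation law'' $h=\tfrac32 k^2+\text{const}$ coming from invariance of the functional under shifts of the conformal parameter (a Noether first integral), which is a useful consistency check.

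The main obstacle is bookkeeping rather than conceptual: correctly setting up the linearized Maurer--Cartan equation with the \emph{canonical} (hence rigid) frame, keeping track of which entries of $\mathfrak n$ survive, and performing the integrations by parts without sign errors. One subtlety worth care is justifying that every admissible infinitesimal variation is realized by some such $n$ (i.e.\ that tangential reparametrization directions contribute nothing, which holds because the functional is reparametrization-invariant), so that vanishing of the first variation for all $n$ is truly equivalent to the Euler--Lagrange system; this is where the uniqueness clause in Theorem~\ref{thm:canonical-frame} does the real work, pinning down the correspondence between normal variation fields and pairs $(\partial_t h,\partial_t k)$.
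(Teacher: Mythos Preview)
Your overall architecture---canonical frames for $\gamma_{[t]}$, the linearized Maurer--Cartan compatibility equation $\partial_u\mathcal V-\partial_t\mathcal U+[\mathcal U,\mathcal V]=0$, then integration by parts---is exactly the paper's approach. But there is a concrete error in your reduction step. You claim the components of $\mathfrak n$ collapse to a \emph{single} free function $n:=n^1_0$, the ``infinitesimal change of speed.'' This is wrong: the free test functions are the two \emph{normal} components $v^2_4$ and $v^3_4$ (the $M^4_2$- and $M^4_3$-entries of $\mathcal V|_{t=0}$), corresponding to the two spacelike normal directions $M_2,M_3$ of the canonical frame. Working through the compatibility equations one finds that $\partial_t\mathfrak v|_{t=0}$ is expressed through $v^2_4,v^3_4$ and their $u$-derivatives, and after integrating by parts the first variation becomes
\[
-\tfrac12\int_{\mathrm K}\big(\ddot k-k^3+2kh\big)\,v^3_4\,du-\tfrac12\int_{\mathrm K}\big(\dot h-3k\dot k\big)\,v^2_4\,du,
\]
so the two Euler--Lagrange equations pair with the two independent test functions. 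A single free scalar could never produce two independent equations; your ``Noether consistency check'' that $\dot h=3k\dot k$ is a first integral is fine, but it does not arise the way you describe.

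There is also a genuine gap in your converse. To conclude that vanishing of the first variation for all admissible $\Gamma$ forces \eqref{ELE1}, you must show that \emph{arbitrary} compactly supported pairs $(v^2_4,v^3_4)$ are realized by some admissible variation. This is not automatic from ``tangential directions contribute nothing'': it is a surjectivity statement about the map from variations to normal test functions. The paper handles this by working locally in a Minkowski chamber, writing $\gamma$ via a timelike curve $\alpha$ in $\mathbb M_0^{(1,2)}$, and exhibiting an explicit normal deformation $\alpha+t(\phi_2 N+\phi_3 B)$ whose induced $(v^2_4,v^3_4)$ equal prescribed $(v,w)$ after the gauge change between the Minkowski first-order frame and the canonical frame. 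Your proposal does not supply this construction, and without it the ``only if'' direction is incomplete.
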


\begin{proof}
First, we prove that any timelike generic curve satisfying (\ref{ELE1}) is a critical point of the conformal strain functional. So, assume that \eqref{ELE1} is satisfied and let $\Gamma$ be an admissible variation. Let $\upsilon_{[t]}$, $k_{[t]}$ and $h_{[t]}$ denote the strain density and the conformal curvatures
of $\gamma_{[t]}$ and set
$\mathfrak{v}(u,t)=\upsilon_{[t]}(u)$, $\mathfrak{k}(u,t)=k_{[t]}(u)$, $\mathfrak{h}(u,t)=h_{[t]}(u)$.
These functions satisfy $\mathfrak{v}(u,t)=1$, $\mathfrak{k}(u,t)=k(u)$ and $\mathfrak{h}(u,t)=h(u)$, for every $(u,t)\notin \mathrm{K}\times (-\epsilon,\epsilon)$, where $\mathrm{K}$ denotes the support of the variation.
Note that since $\gamma$ is parametrized by conformal parameter, then $\mathfrak{v}(u,0)=1$, for every $u\in I$.
Consider the map $\tilde{\mathbf{M}}:I\times (-\epsilon,\epsilon)\to \mathcal{M}$,
such that $\tilde{\mathbf{M}}(u,t)=\mathbf{M}_{[t]}(u)$, where $\mathbf{M}_{[t]}$ is the canonical frame along $\gamma_{[t]}$, and let $\mathcal{U}$, $\mathcal{V}:\mathrm{I}\times (-\epsilon,\epsilon)\to \mathfrak{m}(2,3)$
 denote the maps such that $\partial_u\tilde{\mathbf{M}}=\tilde{\mathbf{M}}  \mathcal{U}$ and  $\partial_t\tilde{\mathbf{M}}=\tilde{\mathbf{M}}   \mathcal{V}$.
From the Maurer--Cartan equations, it follows that
\begin{equation}\label{ELE3}
 \partial_u\mathcal{V}-\partial_t\mathcal{U}+[\mathcal{U},\mathcal{V}]=0.
  \end{equation}
By construction, we have
$\mathcal{U}=\mathfrak{v}(M^0_2-M^4_1-\mathfrak{k}M^2_3-\mathfrak{h}M^0_1)$.
%$\mathcal{U}=\mathfrak{v}(M^0_1+M^4_2+\mathfrak{k}M^2_3+\mathfrak{h}M^4_1)$.
Let
\[
\begin{split}
 \mathcal{V}&=V^0_0M^0_0+V^1_0M^0_1+V^2_0M^0_2+V^3_0M^0_3+V^2_1M^1_2\\
 &\quad +V^3_1M^1_3+V^3_2M^2_3+
   V^1_4M^4_1+V^2_4M^4_2+V^3_4M^4_3.
   \end{split}
   \]
Using (\ref{ELE3}), we compute
\begin{equation}\label{ELE4}
\begin{cases}
%\left\{\!\!\begin{array}{l}
V^0_0=\frac{1}{\mathfrak{v}}(\partial_t\mathfrak{v} + \partial_uV^1_4),\\
V^0_0=\frac{1}{\mathfrak{v}}(\partial_uV^2_0 - \partial_t\mathfrak{v})-(\mathfrak{h}V^2_1+\mathfrak{k}V^3_0),\\
V^2_1=\frac{1}{\mathfrak{v}}\partial_uV^2_4 - \mathfrak{k}V^3_4,\\
V^3_1=\frac{1}{\mathfrak{v}}\partial_uV^3_4 + \mathfrak{k} V^2_4,\\
V^3_0=-\frac{1}{\mathfrak{v}}\partial_uV^3_1-\mathfrak{k}V^2_1-\mathfrak{h}V^3_4.
%    \end{array}\right.
\end{cases}
\end{equation}
%
%\begin{equation}\label{ELE4} %vecchie formule
%\begin{cases}
%%\left\{\!\!\begin{array}{l}
%V^0_0=\frac{1}{\mathfrak{v}}(\partial_t\mathfrak{v}-\partial_uV^1_0),\\
%V^0_0=\frac{1}{\mathfrak{v}}(\partial_uV^2_4-\partial_t\mathfrak{v})-(\mathfrak{h}V^2_1+\mathfrak{k}V^3_4),\\
%V^2_1=\frac{1}{\mathfrak{v}}\partial_uV^2_0-\mathfrak{k}V^3_0,\\
%V^3_1=\frac{1}{\mathfrak{v}}\partial_uV^3_0 + \mathfrak{k} V^2_0,\\
%V^3_4=-\frac{1}{\mathfrak{v}}\partial_uV^3_1-\mathfrak{k}V^2_1-\mathfrak{h}SV^3_0.
%%    \end{array}\right.
%\end{cases}
%\end{equation}
From the third, fourth and fifth equations of (\ref{ELE4}), it follows that
\begin{equation}\label{ELE5}
V^3_0=-\left( \frac{1}{\mathfrak{v}^2}\partial^2_{uu}V^3_4-\frac{1}{\mathfrak{v}^3}\partial_u\mathfrak{v}\partial_uV^3_4+
\frac{2}{\mathfrak{v}}\mathfrak{k}\partial_uV^2_4+\frac{1}{\mathfrak{v}}\partial_u\mathfrak{k}V^2_4+
(\mathfrak{h}-\mathfrak{k}^2)V^3_4
\right).
\end{equation}
Let $v^i_j:I\to \R$ be the functions defined by $v^i_j(u)=V^i_j(u,0)$.
Evaluating (\ref{ELE4}) and (\ref{ELE5}) at $(u, 0)$ yields
\begin{equation}\label{ELE6}
\begin{cases}
%\left\{\!\!\begin{array}{l}
\partial_t\mathfrak{v}|_{(u,0)}=v^0_0 - \dot{v}^1_4,\\
v^0_0=\frac{1}{2}\left( \dot{v}^2_0+\dot{v}^1_4-(hv^2_1+kv^3_0)\right),\\
v^2_1=\dot{v}^2_4-kv^3_4,\\
v^3_0=-(\ddot{v}^3_4+2k\dot{v}^2_4+\dot{k}v^2_4+(h-k^2)v^3_4).
%    \end{array}\right.
\end{cases}
\end{equation}
%
%\begin{equation}\label{ELE6} %vecchie formule
%\begin{cases}
%%\left\{\!\!\begin{array}{l}
%\partial_t\mathfrak{v}|_{(u,0)}=v^0_0+\dot{v}^1_0,\\
%v^0_0=\frac{1}{2}\left( \dot{v}^2_4-\dot{v}^1_0-(hv^2_1+kv^3_4)\right),\\
%v^2_1=\dot{v}^2_0-kv^3_0,\\
%v^3_4=-(\ddot{v}^3_0+2k\dot{v}^2_0+\dot{k}v^2_0+(h-k^2)v^3_0).
%%    \end{array}\right.
%\end{cases}
%\end{equation}
%
Note that the supports of the functions $v^i_j$ are contained in $\mathrm{K}$.
Using (\ref{ELE6}) and integrating by parts yields
\begin{equation}\label{ELE7}
 \frac{d}{dt}\left.\left(\int_{\mathrm{K}} \mathfrak{v}(u,t)du \right)\right|_{{t=0}}= \int_{\mathrm{K}}
 v_0^0(u)du=-\frac{1}{2}\int_{\mathrm{K}} (hv^2_1 + kv^3_0)du.
    \end{equation}
%\begin{equation}\label{ELE7}
% \frac{d}{dt}\left( \int_{\mathrm{K}} \mathfrak{v}(u,t)du \right)|_{t=0}= \int_{\mathrm{K}}
% v_0^0(u)du=-\frac{1}{2}\int_{\mathrm{K}} (hv^2_1+kv^3_4)du.
%    \end{equation}
Using again (\ref{ELE6}), we have
\begin{equation}\label{ELE8}
\begin{split}
\int_{\mathrm{K}}(hv^2_1+kv^3_0)du &= \int_{\mathrm{K}}\left((k^3-2kh)v^3_4+(h-2k^2)\dot{v}^2_4-k\dot{k}v^2_4-k\ddot{v}^3_4\right)du \\
& = - \int_{\mathrm{K}} (\ddot{k}-k^3+2kh)v^3_4du - \int_{\mathrm{K}} (\dot{h}-3k\dot{k})v^2_4 du.
\end{split}
\end{equation}
%
%\begin{equation}\label{ELE8}
%\begin{split}
%\int_{\mathrm{K}}(hv^2_1+kv^3_4)du = \int_{\mathrm{K}}\left((k^3-2kh)v^3_0+(h-2k^2)\dot{v}^2_0-k\dot{k}v^2_0-k\ddot{v}^3_0\right)du =\\
%= - \int_{\mathrm{K}} (\ddot{k}-k^3+2kh)v^3_0du - \int_{\mathrm{K}} (\dot{h}-3k\dot{k})v^2_0 du.
%\end{split}
%\end{equation}
From (\ref{ELE7}) and (\ref{ELE8}), it follows that $\gamma$ is a critical curve if \eqref{ELE1} is satisfied.

\vskip0.1cm

Conversely, it suffices to prove that for each $u_0\in \mathrm{I}$
there exists a closed interval $\mathrm{K}\subset \mathrm{I}$, containing $u_0$, such that,
for every pair of smooth functions $v,w:\mathrm{I}\to \R$ whose supports are contained in $\mathrm{K}$,
there exists a compactly supported variation satisfying
$v^1_4=0$, $v^2_4=v$ and $v^3_4=w$.
%$v^1_0=0$, $v^2_0=v$ and $v^3_0=w$.
Since this property is local and invariant under the action of the conformal group, we may assume
that the trajectory of the curve is contained in $\mathbf{j}_m(\mathbb{M}_0^{(1,2)})$.
We can then write
\[
 \gamma=[M^o_0+\sum_{j=1}^{3}\alpha^jM^o_j+\frac{{}^*\!\alpha  \alpha}{2}M^o_4],
  \]
where $\alpha={}^t\!(\alpha^1,\alpha^2,\alpha^3)$ is a future-directed timelike curve of Minkowski 3-space.
Next, let
\begin{equation}\label{ELE9}
\begin{cases}
%\left\{\!\!\begin{array}{l}
T=\frac{1}{\sqrt{(\dot{\alpha}^1)^2-(\dot{\alpha}^2)^2-(\dot{\alpha}^3)^2}}
{}^t\!(\dot{\alpha}^1,\dot{\alpha}^2,\dot{\alpha}^3),\\
N=\frac{1}{\sqrt{(\dot{\alpha}^1)^2-(\dot{\alpha}^2)^2}}{}^t\!(\dot{\alpha}^2,\dot{\alpha}^1,0),\\
B=T_{\alpha}\times N_{\alpha},
%    \end{array}\right.
\end{cases}
\end{equation}
where ``$\times$'' denotes the vector cross product in $\mathbb{M}_0^{(1,2)}$. By posing
\begin{equation}
\begin{cases}
\mathrm{F}_0=M^o_0+\sum_{j=1}^{3}\alpha^jM^o_j+\frac{{}^*\!\alpha \alpha}{2}M^o_4,,\\
\mathrm{F}_1=\sum_{j=1}^{3}T^jM^o_j+{}^*\!\alpha T M^o_4,\\
\mathrm{F}_2=\sum_{j=1}^{3}N^jM^o_j+{}^*\!\alpha  N M^o_4,\\
\mathrm{F}_3=\sum_{j=1}^{3}B^jM^o_j+ {}^*\!\alpha  B M^o_4,\\
\mathrm{F}_4=M^o_4,
\end{cases}
\end{equation}
the map $\mathbf{F}=(\mathrm{F}_0,\dots, \mathrm{F}_4):I\to \mathcal{M}$ is a first order conformal frame field,
the Minkowski frame along $\gamma$. Thus, there exists a unique smooth map $X:I\to M^{\uparrow}_+(2,3)_{1}$,
such that $\mathbf{M}=\mathbf{F}\cdot X$ is the canonical frame.\footnote{Here, $M^{\uparrow}_+(2,3)_{1}$
is the closed subgroup defined in \eqref{M1}.} Let
\begin{equation}\label{ELE11}X=\left(
      \begin{array}{ccccc}
        r &* & * & * & * \\
        0 & 1 & 0 & 0 & * \\
        0 & 0 & \cos \vartheta  & -\sin \vartheta  & * \\
        0 & 0 & \sin \vartheta  & \cos \vartheta  & * \\
        0 & 0 & 0 & 0 & r^{-1} \\
      \end{array}
    \right),\end{equation}
where $r,\vartheta:I\to \R$ are smooth functions. Next, let
\begin{equation}\label{ELE10Bis}
 \phi_2=\frac{1}{r}(v\cos \vartheta -w \sin \vartheta ),\quad
  \phi_3=\frac{1}{r}(v \sin \vartheta + w \cos \vartheta )
   \end{equation}
and define $\upsilon : \mathrm{I}\to \mathbb{M}_0^{(1,2)}$ by
$\upsilon=\phi_2 N_{\alpha}+\phi_3B_{\alpha}$.
Since $\mathrm{Supp}(\upsilon)\subset \mathrm{K}$, then
\begin{equation}\label{ELE10Tris}
 \tilde{\alpha}: \mathrm{I}\times (-\epsilon,\epsilon)
 \to \mathbb{M}_0^{(1,2)}, (u,t)\mapsto \alpha(u)+t\upsilon(u),
   \end{equation}
is a compactly supported variation of $\alpha$, and hence
\[
  \Gamma : (u,t)\mapsto [M^o_0+\sum_{j=1}^{3}\widetilde{\alpha}^jM^o_j
    +\frac{{}^*\!\widetilde{\alpha} \widetilde{\alpha}}{2}M^o_4]
      \]
is a compactly supported variation of $\gamma$.
Without loss of generality, we may assume that all the curves $\gamma_{[t]}$ are generic and timelike.
For each $t\in (-\epsilon,\epsilon)$, let $\mathbf{F}_{[t]}:I\to \mathcal{M}$ be
the Minkowski first order frame along $\gamma_{[t]}$ and define $\widetilde{\mathbf{F}}:I\times(-\epsilon,  \epsilon)\to \mathcal{M}$ by $\widetilde{\mathbf{F}}(u,t)=\mathbf{F}_{[t]}(u)$. Then, for every $t\in (-\epsilon,\epsilon)$, there exists a unique map $\tilde{X}_{[t]}:I\to M^{\uparrow}_+(2,3)_1$, such that $\mathbf{M}_{[t]}=\mathbf{F}_{[t]}\cdot \tilde{X}_{[t]}$ is the canonical frame along $\gamma_{[t]}$ and that $\tilde{X}_{[0]}=X$.
If we put $\tilde{\mathbf{M}}(u,t)=\mathbf{M}_{[t]}(u)$, then $V(u)$ $=$ $\mathcal{V}(u,0)$ $=$ $(\widetilde{\mathbf{M}}^{-1}\partial_t \widetilde{\mathbf{M}})|_{(u,0)}=U(u)+W(u)$,
where $U(u)=X(u)^{-1}\cdot (\tilde{\mathbf{F}}^{-1}\partial_t \tilde{\mathbf{F}})|_{(u,0)}\cdot X(u)$ and $W(u)=(\tilde{X}^{-1}\partial_t \tilde{X})|_{(u,0)}$.
Let $V_0= {}^t\!(v^0,v^1,v^2,v^3,0)$, $U_0={}^t\!(u^0,u^1,u^2,u^3,0)$, and
$W_0={}^t\!(w^0,w^1,w^2,w^3,0)$ be the first column vectors of $V$, $U$ and $W$, respectively.
Since $\tilde{X}$ take values in the subgroup $M^{\uparrow}_+(2,1)_1$, then $u^1=u^2=u^3=0$ and $v^1=w^1,v^2=w^2,v^3=w^3$.
Taking into account (\ref{ELE11}), we have
\begin{equation}\label{ELE12}
\left(
\begin{array}{c}
v^1 \\
v^2 \\
v^3 \\
\end{array}
\right)=\left(
\begin{array}{c}
w^1 \\
w^2 \\
w^3 \\
\end{array}
\right)=
r\left(
      \begin{array}{ccc}
        1 & 0 & 0 \\
        0 & \cos \vartheta & \sin \vartheta \\
        0 & -\sin \vartheta & \cos \vartheta \\
      \end{array}
    \right)\cdot \left(
                   \begin{array}{c}
                     m^1 \\
                     m^2 \\
                     m^3 \\
                   \end{array}
                 \right),
    \end{equation}
where $m^1$, $m^2$ and $m^3$ are the second, third and fourth components of the first column vector of $(\tilde{\mathbf{F}}^{-1}\partial_t \tilde{\mathbf{F}})|_{(u,0)}$. According to the
definition of $\tilde{\mathbf{F}}$, it follows that $m^1$, $m^2$ and $m^3$ are the components of $\upsilon$ with respect to the frame $(T,N,B)$. Then, in view of (\ref{ELE10Tris}), we get
\begin{equation}\label{ELE13}\left(
    \begin{array}{c}
      m^1 \\
      m^2 \\
      m^3 \\
    \end{array}
  \right)=\left(
            \begin{array}{c}
              0 \\
              \phi^2 \\
              \phi^3 \\
            \end{array}
          \right)=\frac{1}{r}\left(
                               \begin{array}{ccc}
                                 1 & 0 & 0 \\
                                 0 & \cos \vartheta & -\sin \vartheta \\
                                 0 & \sin \vartheta  & \cos \vartheta \\
                               \end{array}
                             \right)\cdot \left(
                                            \begin{array}{c}
                                              0 \\
                                              v \\
                                              w \\
                                            \end{array}
                                          \right).
\end{equation}
From (\ref{ELE12}) and (\ref{ELE13}), it follows that $v^1=m^1=0$, $v^2=v$ and $v^3=w$, which proves the
required result.
\end{proof}

\begin{figure}[ht]
\begin{center}
\includegraphics[height=4.1cm,width=4.1cm]{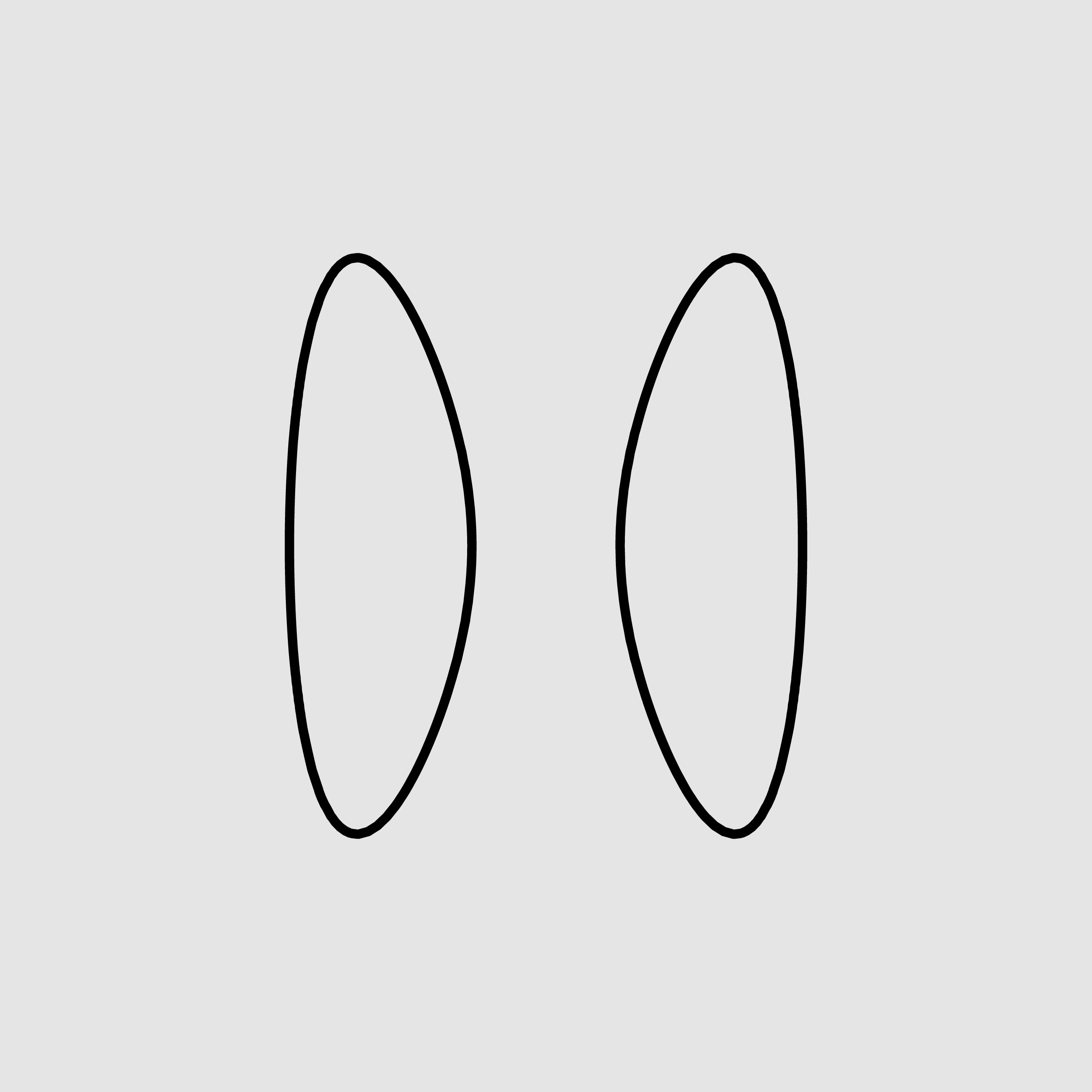}
\includegraphics[height=4.1cm,width=4.1cm]{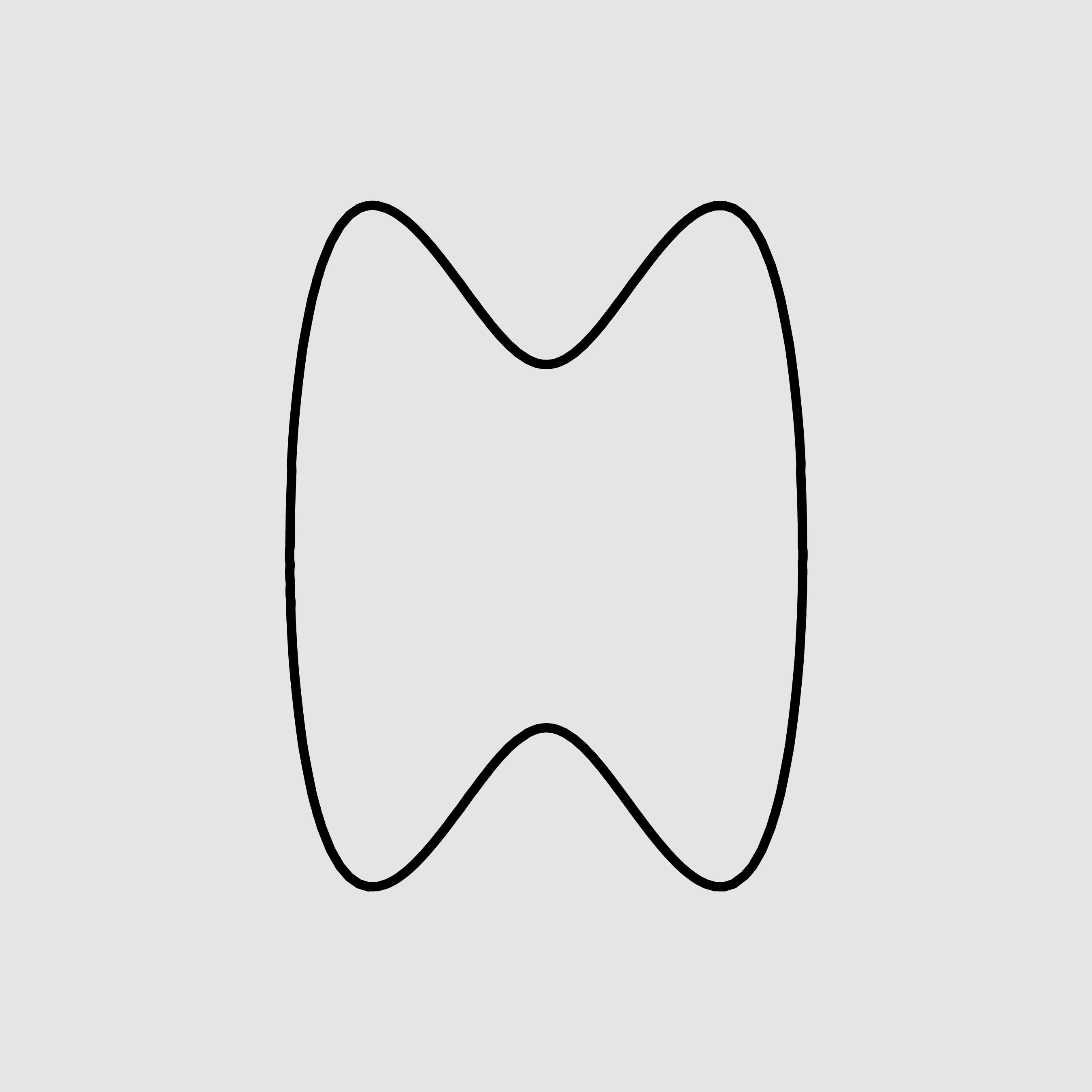}
\includegraphics[height=4.1cm,width=4.1cm]{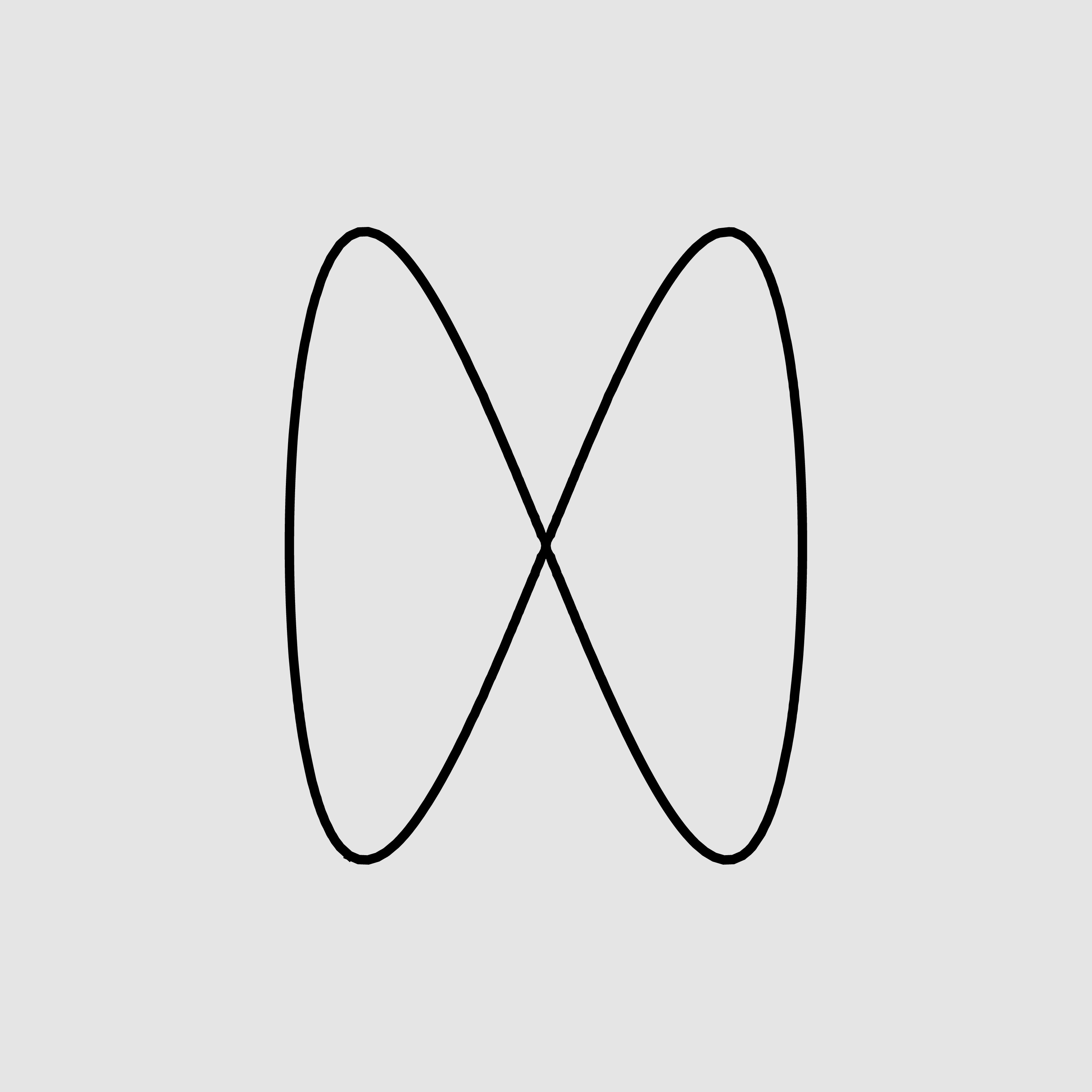}
\caption{Phase portraits of the first, second and third type.}\label{FIG5i5ii5iii}
\end{center}
\end{figure}

\begin{remark}
A generic timelike curve with constant curvatures can be a critical point of the strain functional if and only if $k^3=2kh$. Thus, assuming $k> 0$, these curves belong to the classes $C_1$, $C_4$ or $C_9$.
\end{remark}

\begin{remark}
Within the general scheme of Griffiths' formalism of the calculus of variations \cite{Gr},
using the Euler--Lagrange equations and the existence of the canonical frame, one can build a dynamical system defined on an appropriate momentum space in such a way that the projections of its trajectories on the Einstein universe are the critical curves of the variational problem. More specifically, the momentum space $Y$ is the Cartesian product $\mathcal{M}\times \R^3$ with fiber coordinates $(k,\dot{k},h)$ equipped with the projection
$\pi_Y: (\mathbf{M},k,\dot{k},h)\in Y\to [M_0]\in \mathcal{E}^{1,2}$. The exterior differential 1-forms
$\mu^0_0$, $\mu^1_0$, $\mu^2_0$, $\mu^3_0$, $\mu^2_1$, $\mu^3_2$, $\mu^3_2$, $\mu^1_4$, $\mu^2_4$,
$\mu^3_4$ and $dk$, $d\dot{k}$, $dh$
define an absolute parallelism on $Y$. Let $\partial_{\mu^0_0}$, $\partial_{\mu^1_0}$, $\partial_{\mu^2_0}$, $\partial_{\mu^3_0},\partial_{\mu^2_1}$,
$\partial_{\mu^3_1}$, $\partial_{\mu^3_2}$, $\partial_{\mu^1_4}$, $\partial_{\mu^2_4}$, $\partial_{\mu^3_4}$ and $\partial_{k}$, $\partial_{\dot{k}}$, $\partial_{h}$ be the vector fields of the parallelism and let $\xi$
be the vector field
\[
 \xi=\partial_{\mu^1_0}+\partial_{\mu^2_4}+h\partial_{\mu^1_4}+k\partial_{\mu^3_2}+\dot{k}\partial_{k}+
(k^3-2kh)\partial_{\dot{k}}+3k\dot{k}\partial_{h}.
  \]
The integral curves of $\xi$ are maps $\R \to Y$, $u\mapsto (\mathbf{M}(u),k(u),\dot{k}(u),h(u))$, where

\begin{itemize}
\item $\gamma : u\in \R\to [M_0(u)]\in \mathcal{E}^{1,2}$ is a critical curve of the conformal strain functional, parameterized by the natural parameter;

\item $k,h$ are its conformal curvatures and $\dot{k}$ is the derivative of $k$;

\item $\mathbf{M}:\R\to \mathcal{M}$ is the canonical conformal frame field along $\gamma$.
\end{itemize}

In principle then, the problem is reduced to the integration of the vector field $\xi$. It
is important to note that $\xi$ is the characteristic vector field of the
$\mathrm{A}^{\uparrow}_+(2,3)$-invariant contact form
\[
  \frac{1}{2}(\mu^1_0+\mu^2_4+(k^2-h)\mu^2_0-\dot{k}\mu^3_0+k\mu^3_1).
    \]
\end{remark}

\subsection{The conformal curvatures of the extrema}
From now on we consider critical curves with non-constant curvatures. Then (\ref{ELE1}) implies that $\gamma$ is a critical curve if and only if there exists $(e_1,e_2)\in \mathcal{D}=\{(a,b) \,:\, a<b,\,\, b>0\}$ such that
\begin{equation}
\label{PP1}
\dot{k}^2=-(k^2-e_1)(k^2-e_2),\quad h=\frac{3}{2}k^2-\frac{1}{2}(e_1+e_2).
\end{equation}
We say that $e_1,e_2$ are the {\em parameters} of the critical curve.

\begin{defn}
The {\it phase portrait} of a critical curve with parameters $e_1$ and $e_2$ is the
real algebraic curve $\mathcal{F}_{e_1,e_2}\subset \R^2$ defined by the equation
$y^2 + (x^2-e_1)(x^2-e_2)=0$.
\end{defn}

Three possibilities may occur depending on whether $0<e_1<e_2$, $e_1<0<e_2$, or $e_1=0<e_2$.
Correspondingly, we have a partition of $\mathcal{D}$ into three regions:
$\mathcal{D}_1=\{(e_1,e_2)\in \mathcal{D}\,:\, 0<e_1<e_2\}$,
$\mathcal{D}_2=\{(e_1,e_2)\in \mathcal{D}\,:\, e_1<0<e_2\}$ and
$\mathcal{D}_3=\{(e_1,e_2)\in \mathcal{D}\,:\, e_1=0<e_2\}$.
\vskip0.1cm

\noindent {\bf Phase-Type 1.} If $(e_1,e_2)\in \mathcal{D}_1$, the phase portrait $\mathcal{F}_{e_1,e_2}$
is the real part of a smooth elliptic curve. It consists of two ovaloids, one of them belongs to the
half plane $x>0$ while the other is the mirror image of the first by the reflection about the $y$-axis
(see Figure \ref{FIG5i5ii5iii}). Thus, the first conformal curvature is either strictly positive or else
strictly negative. By possibly acting with a time-preserving and orientation-reversing conformal
transformation, we may assume $k>0$. We put
\begin{equation}\label{PP2}
 m_1=\frac{e_2-e_1}{e_2},\quad p_1=e_2.
  \end{equation}
From (\ref{PP1}) it follows that the conformal curvatures of a critical curve with parameters $(e_1,e_2)\in \mathcal{D}_1$ are
\begin{equation}\label{PP3}
\begin{cases}
 k(u)=\sqrt{p_1(1-m_1)}\mathrm{nd}(\sqrt{p_1}u+c,m_1),\\
  h(u)=\frac{1}{2}(3(1-m_1)\mathrm{nd}(\sqrt{p}u+c,m_1)^2+(m_1-2),
  \end{cases}
   \end{equation}
where $\mathrm{nd}(\cdot,m)$ is the Jacobi $\mathrm{nd}$-function with parameter $m$.
By possibly shifting the independent variable, we can assume $c=0$.
\vskip0.1cm

\noindent {\bf Phase-Type 2.} If $(e_1,e_2)\in \mathcal{D}_2$, then $\mathcal{F}_{e_1,e_2}$ is the real part of a smooth elliptic curve. Unlike the previous case, $\mathcal{F}_{e_1,e_2}$ is connected and symmetric with respect
to the reflections about the coordinate axes (see Figure \ref{FIG5i5ii5iii}).
Given $(e_1,e_2)\in \mathcal{D}_2$, let
\begin{equation}\label{PP6}
  m_2=\frac{e_2}{e_2-e_1}\in (0,1),\quad p_2=e_2-e_1>0.
    \end{equation}
Consequently, the conformal curvatures can be written as
\begin{equation}\label{PP10}
\begin{cases}
 k(u)=\sqrt{p_2(1-m_2)m_2}\,\mathrm{sd}(\sqrt{p_2}u+c,m_2),\\ h(u)=\frac{1}{2}p_2(3(1-m_2)\,\mathrm{sd}(\sqrt{p_2}u+c,m_2)^2+1-2m_2),
  \end{cases}
   \end{equation}
where $\mathrm{sd}(\cdot,m)$ is the Jacobi $\mathrm{sd}$-function with parameter $m$ and $c$ is a suitable constant.
Again, by possibly shifting the independent variable, we take $c=0$.
\vskip0.1cm

\noindent {\bf Phase-Type 3.} If $e_1=0$ and $e_2>0$, the phase portrait is a rational Lemniscate, symmetric with respect to the reflections about the coordinate axes, with the double point located at the origin (see Figure \ref{FIG5i5ii5iii}). By possibly acting by a time-preserving and orientation-reversing conformal transformation,
we may assume that $k>0$. We then have
\begin{equation}\label{PP12}
\begin{cases}
k(u)=\frac{2+e_2\cdot \mathrm{exp}(\sqrt{e_2}(u+c))}{e_2+\mathrm{exp}(2\sqrt{e_2}(u+c))},\\
h(u)=\frac{6e_2^2\cdot \mathrm{exp}(2\sqrt{e_2}(u+c))}{(\mathrm{exp}^(2\sqrt{e_2}(u+c))+e_2)^2}-\frac{1}{2}e_2,
\end{cases}
\end{equation}
where $c$ is a constant that can be put equal to zero.
\vskip0.1cm

\begin{remark}
Let $\gamma$ be a critical curve with conformal curvatures $k,h$ as in (\ref{PP3}), (\ref{PP10}), or (\ref{PP12}).
Let $\mathcal{K}(h,k)= M^0_2-M^4_1- {k}M^2_3 - {h}M^0_1$, as in Section \ref{ss:conf-fr}, and define
%We put
$\mathcal{H}:=-M^0_1 - M^4_2-kM^1_3+k'M^0_3+(h-k^2)M^0_2$.
%$\mathcal{H}=-M^0_2+M^4_1-kM^1_3+k'M^4_3+(h-k^2)M^4_2$,
A direct computation shows that $\mathcal H$ satisfies the Lax equation $\mathcal H' =[\mathcal H, \mathcal K]$.
Let $\mathbf{M}$ be
the canonical frame field along $\gamma$. Since $\mathbf M' = \mathbf M \mathcal K$,
 it follows from the Lax equation that $(\mathbf{M} \mathcal{H}  \mathbf{M}^{-1})'=0$,
which implies the existence of a constant element $\mathfrak{m}_{\gamma}\in \mathfrak{m}(2,3)$,
the {\it momentum} of $\gamma$,
such that $\mathfrak{m}_{\gamma}=\mathbf{M} \mathcal{H}  \mathbf{M}^{-1}$,
along $\gamma$.
%
%As a consequence, if
%%we denote by
%$\mathbf{M}$ is the canonical frame field along $\gamma$,
%the element $\mathfrak{m}_{\gamma}=\mathbf{M} \mathcal{H}  \mathbf{M}^{*}$ of
%the Lie algebra $\mathfrak{a}(2,3)$, the momentum of $\gamma$, is constant along $\gamma$.
%
This is a consequence of the Noether conservation theorem and of the invariance of the strain
functional under the action of the conformal group.
\end{remark}

\begin{figure}[ht]
\begin{center}
\includegraphics[height=6.2cm,width=6.2cm]{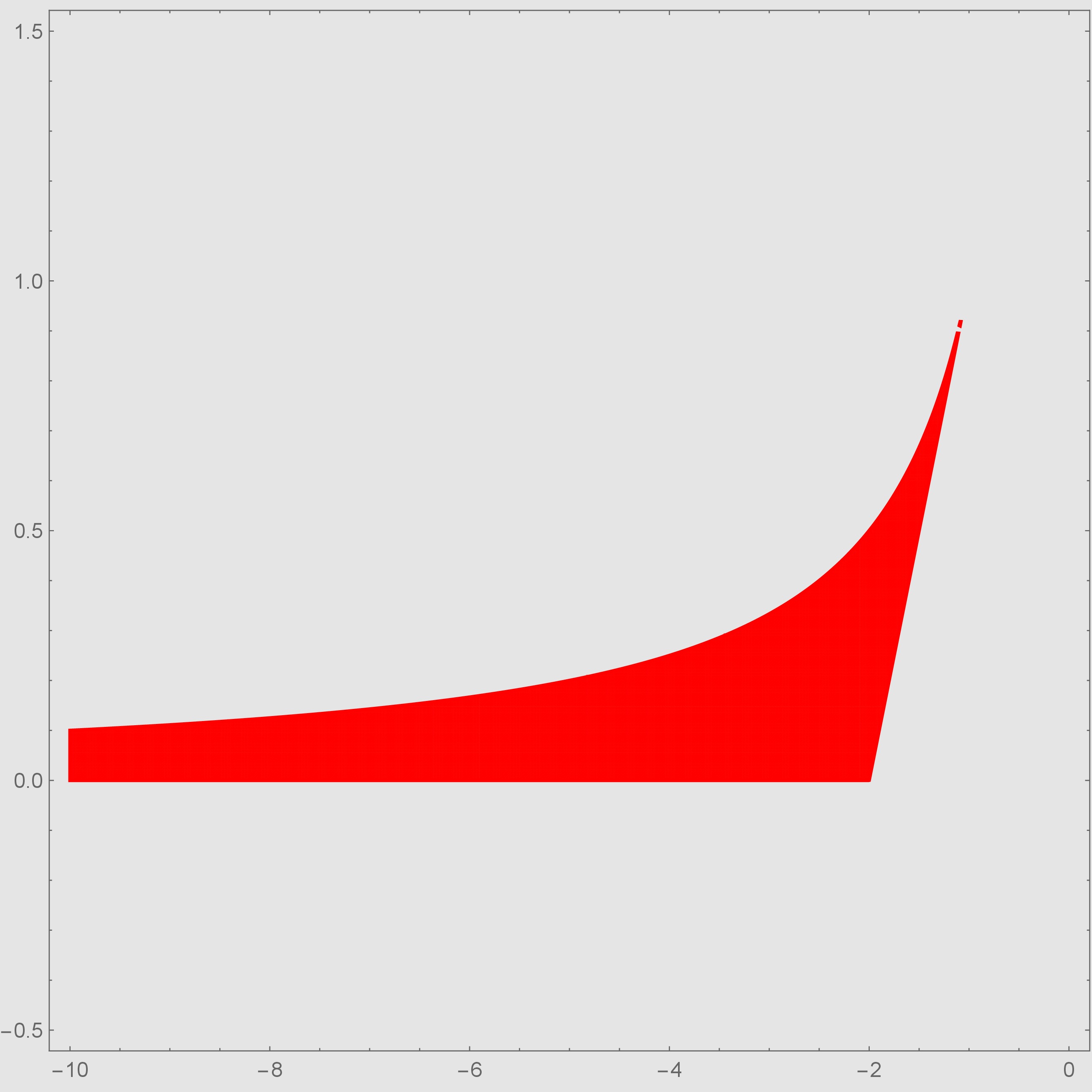}
\includegraphics[height=6.2cm,width=6.2cm]{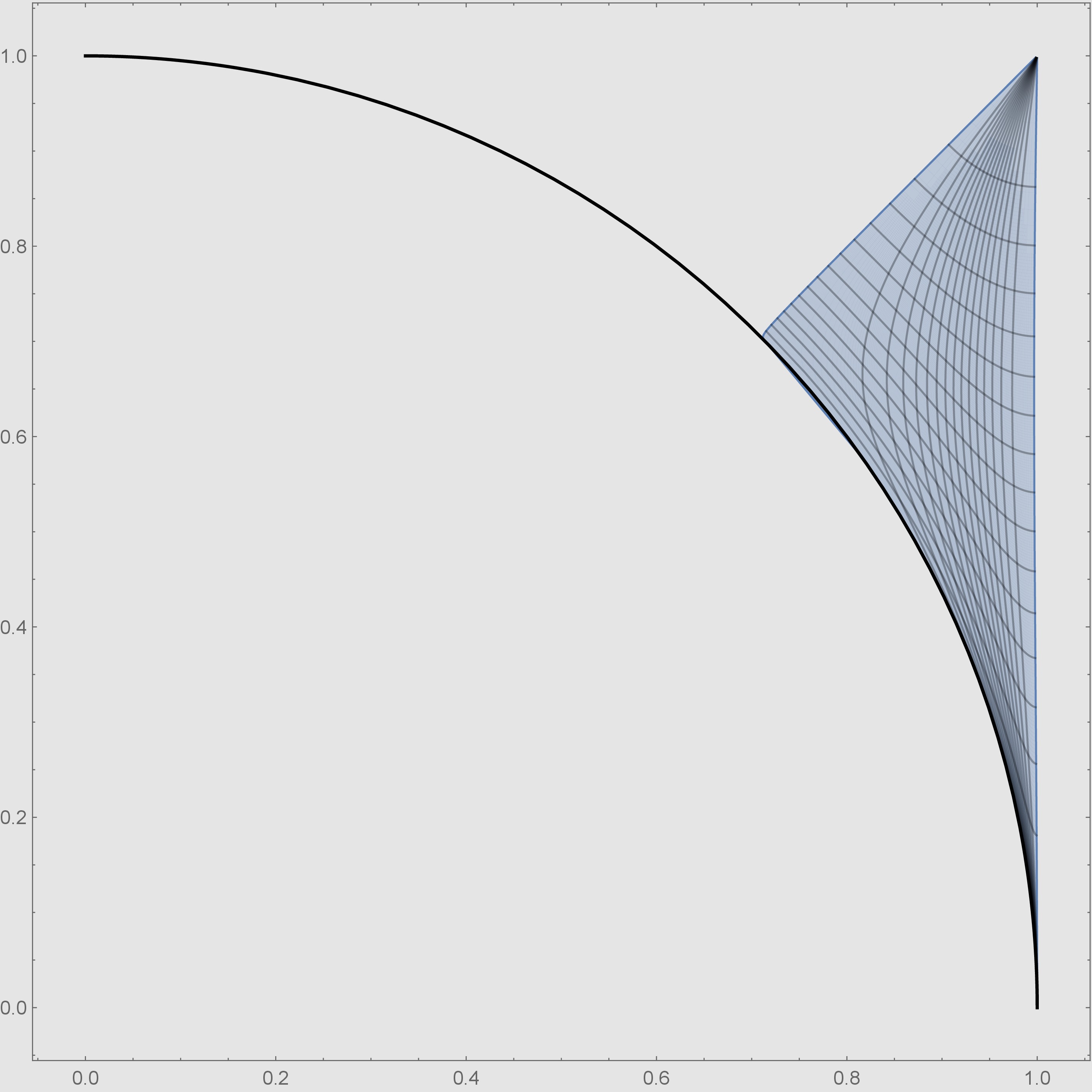}
\caption{The domain $\mathcal{D}^*$ (on the left) and the plot of the map $\Psi$.}\label{FIG5iv}
\end{center}
\end{figure}

Once the curvatures of the extrema are known, the determination of their trajectories amounts to the
integration of the $5\times 5$ linear system $X'=\mathcal{K}(h,k)X$, where $h,k$ are as in (\ref{PP3}),
(\ref{PP10}), or (\ref{PP12}). Such systems can be integrated in terms of elliptic functions and elliptic
integrals.
%However, a complete discussion is rather involved from a computational viewpoint and goes beyond
%the scope of the paper.
The integration of timelike trajectories is based on the study of their phase types and of the orbit
types of their momenta.
The detailed analysis produces fifteen cases, to be treated one by one.

\begin{thm}
There exist countably many distinct conformal equivalence classes of closed critical curve of
the the conformal strain functional.
\end{thm}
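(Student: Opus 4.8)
The plan is to reduce the closing-up condition to a \emph{period map} between two two‑dimensional spaces and then to exploit the density of $\mathbb{Q}^2$ in $\R^2$. As a first reduction, recall from \eqref{ELE1} and \eqref{PP1} that a critical curve with non‑constant curvatures is determined up to conformal equivalence by its parameters $(e_1,e_2)\in\mathcal{D}$, and that for phase-types $1$ and $2$ the curvatures \eqref{PP3}, \eqref{PP10} are periodic with a common minimal period $\varpi=\varpi(e_1,e_2)$ (an explicit multiple of the real quarter-period of the Jacobi $\mathrm{nd}$-, resp.\ $\mathrm{sd}$-, function determined by $m_1,p_1$, resp.\ $m_2,p_2$). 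By contrast, the phase-type-$3$ curvatures \eqref{PP12} are non-periodic, and the critical curves of constant curvature --- which by \eqref{ELE1} lie in $\mathrm{C}_1$, $\mathrm{C}_4$ or $\mathrm{C}_9$ --- are, by Theorems \ref{thm:reg-hom-cur} and \ref{thm:exc-hom-cur}, parametrized by exponential or polynomial functions, hence not projectively periodic. Therefore only phase-types $1$ and $2$ can produce closed trajectories, and we restrict to $(e_1,e_2)\in\mathcal{D}_1\cup\mathcal{D}_2$.

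Let $\mathbf{M}:\R\to\mathcal{M}$ be the canonical frame of the critical curve $\gamma$ attached to $(e_1,e_2)$, so that $\mathbf{M}'=\mathbf{M}\,\mathcal{K}(h,k)$ with $\mathcal{K}(h,k)$ of period $\varpi$. Uniqueness of the canonical frame (Theorem \ref{thm:canonical-frame}) yields a \emph{monodromy} $g=g(e_1,e_2)\in\mathrm{A}^{\uparrow}_+(2,3)$ with $\mathbf{M}(u+\varpi)=g\cdot\mathbf{M}(u)$, hence $\gamma(u+\varpi)=g\cdot\gamma(u)$. Since $\mathcal{H}$ depends only on $k,h,\dot k$ it is $\varpi$-periodic, so conservation of the momentum $\mathfrak{m}_\gamma=\mathbf{M}\,\mathcal{H}\,\mathbf{M}^{-1}$ forces $g\,\mathfrak{m}_\gamma\,g^{-1}=\mathfrak{m}_\gamma$; thus $g$ lies in the centralizer of $\mathfrak{m}_\gamma$ in $\mathrm{A}^{\uparrow}_+(2,3)$. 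Running the case analysis announced in the text --- by phase type and by orbit type of $\mathfrak{m}_\gamma$ in $\mathfrak{a}(2,3)$, the fifteen cases --- one sees that in the non-elliptic cases $g$ has an eigenvalue of modulus $\neq 1$ or a non-trivial Jordan block, and the corresponding $\gamma$ leaves every chamber and is not closed; whereas on the complementary open subdomain $\mathcal{D}^{*}\subset\mathcal{D}_1\cup\mathcal{D}_2$ the momentum is elliptic, $g$ is conjugate into a maximal torus of $\mathrm{A}^{\uparrow}_+(2,3)$, and $g$ is the rotation through an angle pair $(\alpha,\beta)=(\alpha(e_1,e_2),\beta(e_1,e_2))$ given by complete elliptic integrals. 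Since a generic timelike curve lies in no hyperplane of $\R^{2,3}$, its trajectory has trivial pointwise stabilizer; hence, for $(e_1,e_2)\in\mathcal{D}^{*}$, the curve $\gamma$ is closed if and only if $g$ has finite order, that is, if and only if $\alpha/2\pi$ and $\beta/2\pi$ are both rational.

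Now define the period map $\Psi:\mathcal{D}^{*}\to\R^2$, $\Psi(e_1,e_2)=\bigl(\tfrac{\alpha(e_1,e_2)}{2\pi},\tfrac{\beta(e_1,e_2)}{2\pi}\bigr)$. By the previous paragraph, together with the equivalence criterion for generic timelike curves (they are conformally equivalent precisely when their conformal curvatures agree up to a translation of the conformal parameter, and $(e_1,e_2)$ determines $k,h$ through \eqref{PP3}, \eqref{PP10}), the closed critical curves with non-constant curvatures correspond, finite-to-one onto their conformal classes, to the points of $\Psi^{-1}(\mathbb{Q}^2)$. From the explicit elliptic-integral expressions for $\alpha$ and $\beta$ one checks that $\Psi$ is an open map with discrete fibres --- concretely, that its Jacobian determinant does not vanish identically, the computation being most transparent near $\partial\mathcal{D}^{*}$, where the elliptic curvatures degenerate to trigonometric or hyperbolic ones and the monodromy, hence $\Psi$, becomes elementary. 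Openness makes $\Psi(\mathcal{D}^{*})$ a non-empty open subset of $\R^2$, which therefore contains infinitely many points of the dense set $\mathbb{Q}^2$, producing infinitely many pairwise inequivalent closed critical curves; discreteness of the fibres makes $\Psi^{-1}(\mathbb{Q}^2)$ countable. Combined with the (empty) contribution of the constant-curvature and phase-type-$3$ critical curves from the first paragraph, this yields exactly countably infinitely many conformal equivalence classes of closed critical curves.

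The main obstacle is the non-degeneracy of $\Psi$ --- the assertion that the two rotation angles of the monodromy vary independently as $(e_1,e_2)$ ranges over $\mathcal{D}^{*}$. Establishing it requires a careful analysis of the complete elliptic integrals defining $\alpha$ and $\beta$, carried out most conveniently by a degeneration argument at the boundary of $\mathcal{D}^{*}$; this, in turn, presupposes the lengthy orbit-type classification (the fifteen cases) used to single out $\mathcal{D}^{*}$ in the first place.
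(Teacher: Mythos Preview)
Your overall strategy---reduce to a two--parameter family, define a period map from the monodromy of the canonical frame, and intersect the image with $\mathbb{Q}^2$---is exactly the paper's, but the execution differs in an important way. The paper does not argue abstractly via the centralizer of the momentum and a Jacobian computation for $\Psi$. Instead it fixes from the outset the concrete region $\mathcal{D}^{*}=\{(a,b)\in\mathcal{D}: b-a>2,\ a+b<-\sqrt{(a-b)^2-4}\}\subset\mathcal{D}_2$, on which the momentum lies in a maximal compact torus, and then \emph{integrates the Frenet system explicitly}: it writes the solution $\gamma=[T_{mp}\cdot R(u)\cdot W(u)]$ with $R$ a rotation through angles $\Phi_1,\Phi_2$ expressed via incomplete elliptic integrals of the third kind, and $W$ periodic of period $4K(m)/\sqrt{p}$. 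The period map is then the pair $\Psi_j=\tfrac{1}{2\pi}\bigl(\Phi_j(4K(m)/\sqrt{p})-\Phi_j(0)\bigr)$, and the paper asserts (with the explicit formula in hand, and supported by the plot in Figure~\ref{FIG5iv}) that $\Psi$ is a bijection from $\mathcal{D}^{*}$ onto the explicit open set $\widetilde{\mathcal{D}}=\{0<x<y,\ 1-y^2<x<1\}$. So the step you flag as ``the main obstacle''---openness/non-degeneracy of $\Psi$---is handled in the paper not by a boundary degeneration argument but by this direct bijectivity claim for a closed-form $\Psi$.

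Two further remarks. First, the paper proves only the existence of a countably infinite family (those with $(e_1,e_2)\in\mathcal{D}^{*}$ and $\Psi(e_1,e_2)\in\mathbb{Q}^2$); it does not attempt the upper bound you sketch via discreteness of the fibres and the exclusion of phase-type~3 and constant-curvature cases. Your extra argument is reasonable but goes beyond what the theorem, as proved in the paper, actually establishes. Second, your proposal as written is a plan rather than a proof: the fifteen-case orbit-type analysis and the non-vanishing of the Jacobian of $\Psi$ are both deferred. If you want to match the paper's level of completeness, you should at least carry out the explicit integration on one elliptic stratum and write $\Psi$ in closed form; the abstract monodromy framework you set up is correct and illuminating, but it does not by itself deliver the crucial surjectivity onto an open set.
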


\begin{proof}
Suppose $(e_1,e_2)$ belongs to $\mathcal{D}^*=\{(a,b)\in \mathcal{D}\,:\, b-a>2,\, a+b<-\sqrt{(a-b)^2-4}\}$. In this case, the phase portrait is of the second type, so the curvatures are as in (\ref{PP10}) and the momentum belongs to the Lie algebra of a maximal compact Abelian subgroup. Let
\[
\xi_1 =\sqrt{\frac{1}{2}\mid (e_1+e_2)+ \sqrt{(e_2-e_1)^2-4}\mid},\quad
\xi_2 =\sqrt{\frac{1}{2}\mid (e_1+e_2)- \sqrt{(e_2-e_1)^2-4}\mid}
\]
and, for $j\in\{1,2\}$, consider the function
\[
\Phi_j(u)=\frac{\xi_j}{p(m-1)+\xi_j}\left(u+\frac{\sqrt{p}(m-1)}{\xi_j^2}
\Pi\left(\frac{m\xi_j^2+mp(m-1)}{\xi_j^2},\mathrm{am}(\sqrt{p}u,m),m\right)\right),
\]
where $\mathrm{am}(\cdot,m)$ is the Jacobi amplitude and
\[
\Pi(x,\phi,m)=\int_0^{\phi}\frac{d\theta}{\sqrt{1-m\sin^2(\theta)}(1-x\sin^2(\theta)}
\]
is the complete elliptic integral of the third kind. We now consider the mappings
\[
\begin{split}
 R &=\cos(\Phi_1)(E^0_0+E^1_1)-\sin(\Phi_1)(E^1_2-E^1_2)+E^2_2\\
 &\quad +\cos(\Phi_2)(E^3_3+E^4_4)-\sin(\Phi_2)(E^3_4-E^4_3)
 \end{split}
 \]
and
\[
   {}^tW = \left(\frac{\sqrt{k^2+\xi_1^2}}{\xi_1\sqrt[4]{(e_2-e_1)^2-4}},
 0,-\frac{k}{\sqrt{1+e_1e_2}},
  \frac{-\sqrt{k^2+\xi_2^2}}{\xi_2\sqrt[4]{(e_2-e_1)^2-4}},0 \right).
    \]
Then, a timelike critical curve with parameters $(e_1,e_2)\in \mathcal{D}^*$ is equivalent to
\[
   \gamma : \R\ni u \mapsto [T_{mp}\cdot R(u)\cdot W(u)]\in \mathcal{M}^{1,2}.
    \]
Let
\[
  \Psi_j=\frac{1}{2\pi}\left(\Phi_j(\frac{4K(m)}{\sqrt{p}})-\Phi_j(0))\right), \quad (j=1,2),
  \]
where $K(m)$ is the complete elliptic integral of the first kind with parameter $m$, and we consider the map
$\Psi : \mathcal{D^*} \to \R^2$, $(e_1,e_2)\mapsto (\Psi_1(e_1,e_2),\Psi_2(e_1,e_2))$. Since $W$ is periodic with minimal period $\omega=4K(m)/\sqrt{p}$, it follows that a critical curve with parameters $(e_1,e_2)\in \mathcal{D}^*$
is closed if and only if
$\Psi(e_1,e_2)\in \mathbb{Q}^2$. In fact, $\Psi$ is a bijection onto the open domain $\widetilde{\mathcal{D}}=\{(x,y)\in \R^2 \,:\, 0<x<y,\, 1-y^2<x<1\}$, so that the conformal equivalence classes of closed trajectories of this type are in one-to-one correspondence with the countable set $\widetilde{\mathcal{D}}\cap \mathbb{Q}^2$ (see Figure \ref{FIG5iv}).
\end{proof}

\begin{remark}
The inverse of the period map $\Psi$ can be computed by numerical methods. In addition,
also the closed trajectories with parameters belonging to $\mathcal{D}^*$ can effectively
be computed (see Figure \ref{FIG5v}).
The above theorem is the Lorentzian counterpart of an analogous result
%for the extrema of the
for the conformal arclength functional for curves in $S^3$ \cite{M2,MN}.
\end{remark}

\begin{figure}[ht]
\begin{center}
\includegraphics[height=6.2cm,width=6.2cm]{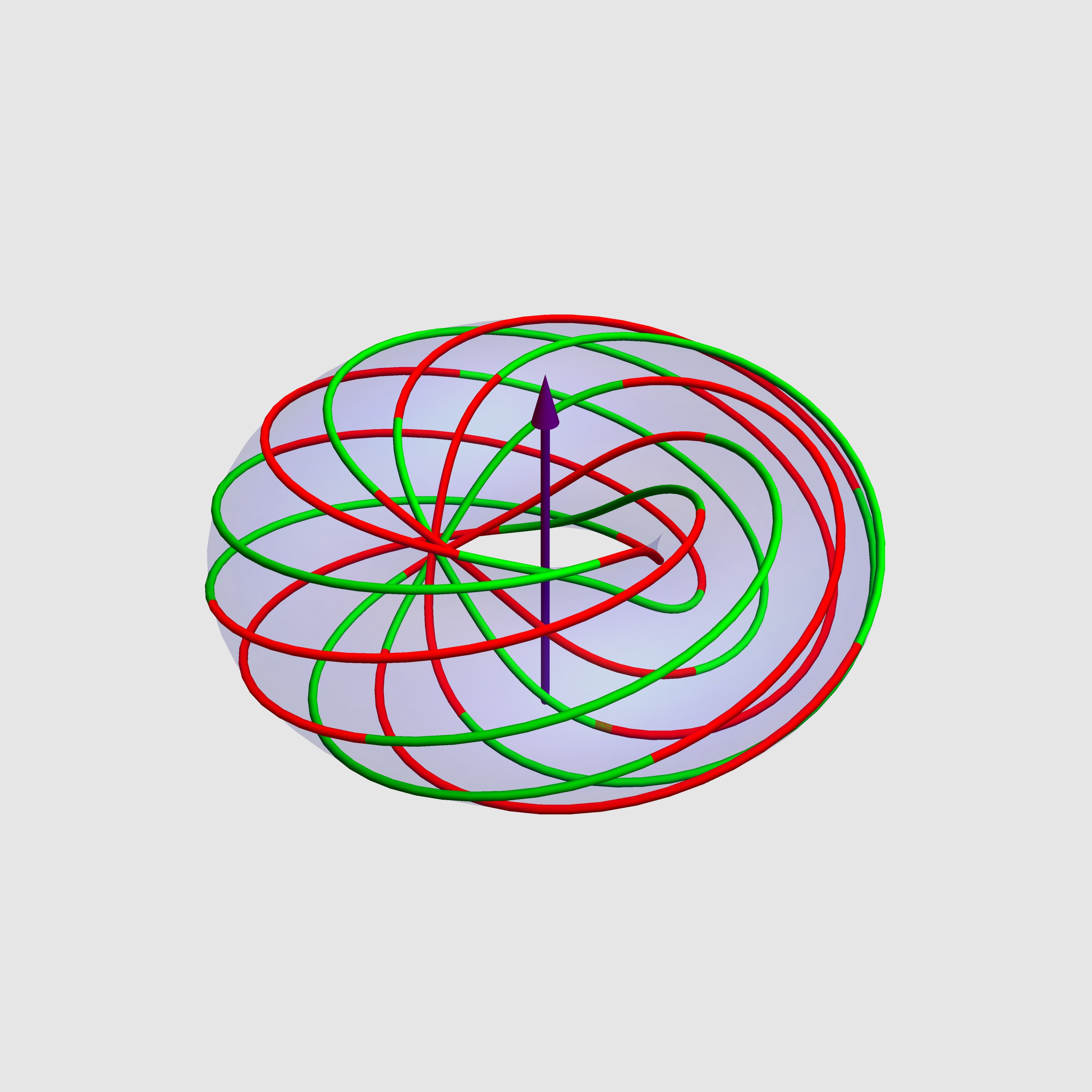}
\includegraphics[height=6.2cm,width=6.2cm]{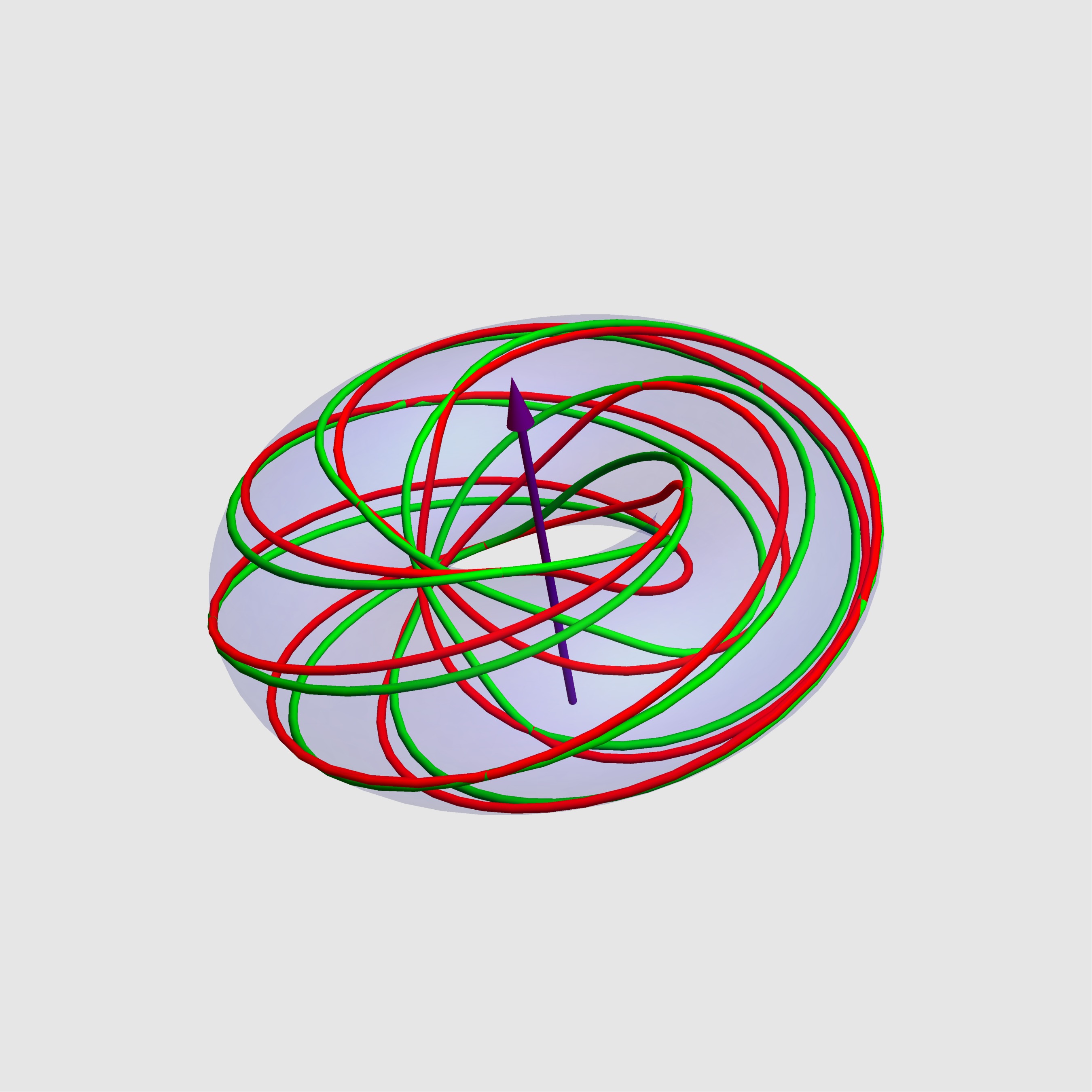}
\caption{The ``toroidal projections'' of closed critical curves with $e_1\approx -1.98638$, $e_2\approx 0.0275109$, $\Psi(e_1,e_2)=(3/4,2/3)$ (on the left) and $e_1\approx -1.74929$, $e_2\approx 0.283545$, $\Psi(e_1,e_2)=(4/5,3/4)$ (on the right). The arcs pictured in red are trapped in the positive adS chamber and the arcs pictured in green are trapped in the negative adS chamber.}\label{FIG5v}
\end{center}
\end{figure}

\section{Timelike curves and transversal knots in $S^3$}\label{s:curves-knots}

In this section we establish and discuss a
%We conclude with a few comments on possible
connection between the conformal Lorentzian geometry of timelike curves and the geometry of transversal knots
in the unit 3-sphere. We begin by recalling some background material about knots and links,
the symplectic group, and the standard contact structure of the 3-sphere.

\subsection{Preliminaries and notation}

Let $M$ be the unit sphere $S^3$ or the Euclidean space $\R^3$. On
$S^3\subset \R^4$,
we fix the orientation defined by the volume form $\mathrm{Vol}|_p=-i_p^\ast(dy^1\wedge dy^2\wedge dy^3\wedge dy^4)$,
where $(y_1,\dots,y_4)$ are the standard coordinates of $\R^4$ and $i : S^3 \hookrightarrow \R^4$
is the standard inclusion. In $\R^3$ we choose the natural orientation defined by the volume form $\mathrm{Vol}=dx^1\wedge dx^2\wedge dx^3$.

A {\it knot} $\mathrm{K}\subset M$ is a simple, closed oriented curve. Let $\mathrm{N}(\mathrm{K})$ be the normal bundle of the knot. For each $\epsilon>0$, we put
$\mathrm{N}_{\epsilon}(\mathrm{K})=\{(p,\overrightarrow{X})\in \mathrm{N}_p(\mathrm{K}): \|\overrightarrow{X}\|<\epsilon\}$. If $\epsilon$ is sufficiently small, there exists a tubular neighborhood $\mathcal{U}\subset M$ of $\mathrm{K}$ and a diffeomorphism $\Phi$ of $\mathrm{N}_{\epsilon}(\mathrm{K})$ onto $\mathcal{U}$. The orientations of $M$ and $\mathrm{K}$ determine an orientation on $\mathrm{N}_p(\mathrm{K})$, for every $p\in \mathrm{K}$. Fix $p_0\in \mathrm{K}$, a positive-oriented orthogonal basis $(\overrightarrow{V}_1,\overrightarrow{V}_2)$ of $\mathrm{N}_{p_0}(\mathrm{K})$ and a positive number $\eta\in (0,\epsilon)$. Let $\Gamma_{p_0}(\eta)\subset \mathrm{N}_{\epsilon}(\mathrm{K})$ be the oriented circle
$\{(p_0,\eta(\cos(\theta)\overrightarrow{V}_1+\sin(\theta)\overrightarrow{V}_2):\theta\in \R\}$.
Then, the homology class of $\Phi(\Gamma_{p_0}(\eta))$ is a generator of the first homology group $\mathrm{H}_1(M\setminus\mathrm{K},\mathbb{Z})$ that we use to identify $\mathrm{H}_1(M\setminus\mathrm{K},\Z)$ and $\Z$.

\begin{defn}
A {\it link} of $M$ is an ordered pair $(\mathrm{K},\widehat{\mathrm{K}})$ of disjoint knots. Its {\it linking number}, denoted by $\mathrm{Lk}(\mathrm{K},\widehat{\mathrm{K}})$, is the integer that corresponds to the homology class of $\widehat{\mathrm{K}}$ in $\mathrm{H}_1(M\setminus\mathrm{K})\cong \Z$.
\end{defn}

\begin{remark}\label{linking1}
Let $(\mathrm{K},\widehat{\mathrm{K}})$ be a link of $S^3$ and $\verb"P"\in S^3$ be a point such that $\verb"P"\notin \mathrm{K}\cup \widehat{\mathrm{K}}$. Denote by $\mathrm{st}:S^3\setminus\{\verb"P"\}\to \R^3$ the composition of the stereographic projection from $\verb"P"$ with the reflection of $\R^3$ with respect to the coordinate $xy$-plane. This is an orientation-preserving conformal mapping such that
$\mathrm{Lk}(\mathrm{K},\widehat{\mathrm{K}})=
\mathrm{Lk}(\mathrm{st}(\mathrm{K}),\mathrm{st}(\widehat{\mathrm{K}}))$. Thus, in principle, the computation of the linking number of a spherical link can be reduced to that of the linking number of a link in $\R^3$. The latter can be evaluated by means of its link diagram. More precisely, given a link $(\mathrm{K},\widehat{\mathrm{K}})$ in $\R^3$, we fix a plane and consider the orthogonal projections $\mathrm{K}_o$ and $\widehat{\mathrm{K}}_o$ of the knots onto this plane. For a generic choice of the plane, the curves $\mathrm{K}_o$ and $\widehat{\mathrm{K}}_o$ are immersed and intersect each other transversely in a finite number of crossing points $p_1,\dots,p_r$.
%
%The index of the crossing point $p_j$ is defined as follows.
%
Let $\gamma,\widehat{\gamma}:\R\to \R^3$ be smooth parametrizations of $\mathrm{K}$ and $\widehat{\mathrm{K}}$, respectively, with the same minimal period $\omega>0$. Correspondingly, we have smooth parametrizations $\gamma_o$ and $\widehat{\gamma}_o$ of $\mathrm{K}_o$ and $\widehat{\mathrm{K}}_o$. If $t_j,\widetilde{t}_j\in [0,\omega)$ are chosen so that $\gamma_o(t_j)=\widehat{\gamma}_o(\widetilde{t}_j)=p_j$, the index of $p_j$ is defined by
$$
  \epsilon(p_j)=\mathrm{sgn}((\gamma(t_j)-\widehat{\gamma}(\widetilde{t}_j))\cdot
  (\gamma'_o|_{t_j}\times \widehat{\gamma}'_o|_{\widetilde{t}_j}))
  $$
and the linking number is given by
$$
\mathrm{Lk}(\mathrm{K},\widehat{\mathrm{K}})=\frac{1}{2}\sum_{j=1}^r \epsilon(p_j).
$$
\end{remark}

\begin{ex}\label{LSTK}
Let $p,q$ be two positive integers, such that $q>p>0$ and $\mathrm{gcd}(p,q)=1$. Denote by $\mathrm{K}_{p,q}, \mathrm{K}^*_{p,q}\subset \R^3$ the torus knots of type $(p,q)$ parametrized by
\begin{equation}\label{STK}
\begin{cases}
\Gamma_{p,q} : \R \ni u\mapsto \left((1+\frac{\cos(pu)}{2})\cos(qu),(1+\frac{\cos(pu)}{2})\sin(qu),-\frac{\sin(pu)}{2},\right),\\
\Gamma^*_{p,q}:\R\ni u\mapsto \left((1+\frac{\cos(pu)}{4})\cos(qu),(1+\frac{\cos(pu)}{4})\sin(qu),-\frac{\sin(pu)}{4},\right).
\end{cases}
\end{equation}
Their projections onto the $xy$-plane intersect transversely in $2pq$ distinct points,
each of which has index 1 (see Figure \ref{FIG7}). From this we infer that $\mathrm{Lk}(\mathrm{K}_{p,q},\mathrm{K}_{p,q}^*)=pq$.
\end{ex}

\begin{figure}[ht]
\begin{center}
\includegraphics[height=6.2cm,width=6.2cm]{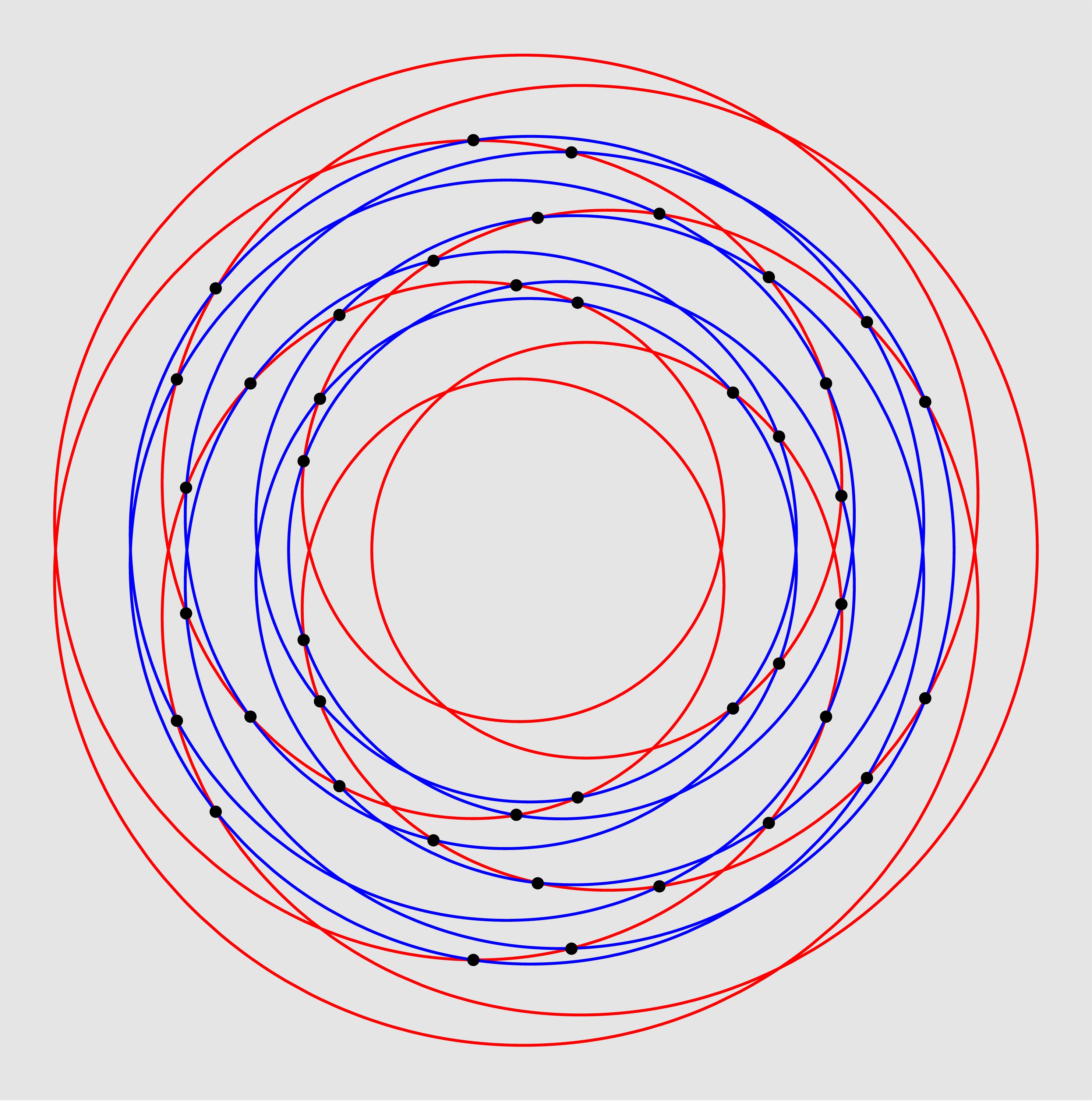}
\includegraphics[height=6.2cm,width=6.2cm]{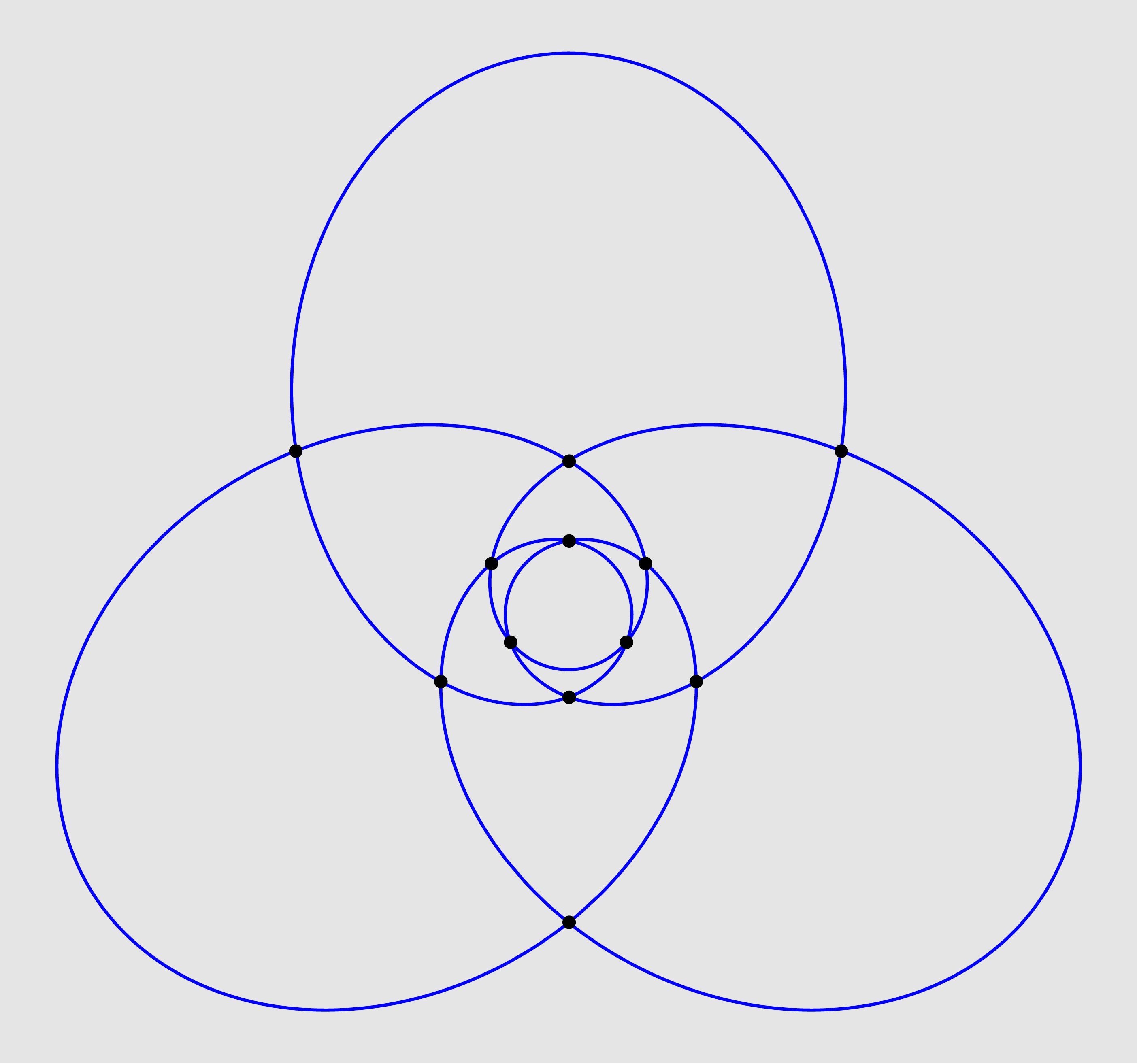}
\caption{The link diagram of $(\mathrm{K}_{3,7},\mathrm{K}_{3,7}^*)$ (on the left) and the writhe diagram of $\check{\mathrm{K}}_{3,5}$ (on the right).}\label{FIG7}
\end{center}
\end{figure}

 The notion of linking number can be adapted for defining the linking
number of a vector filed along a knot.

\begin{defn}
Let $\mathrm{K}\subset M$ be a knot and $\overrightarrow{\mathbf{X}}:\mathrm{K}\to \mathrm{T}(M)$ be a vector field along $\mathrm{K}$. Suppose that $\overrightarrow{\mathbf{X}}$ is transverse to $\mathrm{K}$ and denote by $\overrightarrow{\mathbf{X}}^{\upsilon}$ the orthogonal projection of $\overrightarrow{\mathbf{X}}$ onto the normal bundle of $\mathrm{K}$. Let $\eta>0$ be a positive real number such that $\eta \|\overrightarrow{\mathbf{X}}^{\upsilon}|_p\|<\epsilon$, for every $p\in \mathrm{K}$, and let $\mathrm{K}+\eta \overrightarrow{\mathbf{X}}$ be the knot $\mathrm{K}\ni p\mapsto \Phi[(p,\eta\overrightarrow{\mathbf{X}}^{\upsilon}|_p)]$. Then $(\mathrm{K},\mathrm{K}+\eta \overrightarrow{\mathbf{X}})$ is a link and its linking number, denoted by $\mathrm{Lk}_{\mathrm{K}}(\overrightarrow{\mathbf{X}})$, is said to be the {\it linking number of the vector field} $\overrightarrow{\mathbf X}$ {\it along} $\mathrm{K}$. If $M=S^3$ and  $\verb"P"\in S^3\setminus\mathrm{K}$, then $\mathrm{Lk}_{\mathrm{K}}(\overrightarrow{\mathbf{X}})=
\mathrm{Lk}_{\mathrm{st}(\mathrm{K})}(\mathrm{st}_*(\overrightarrow{\mathbf{X}}))$.
\end{defn}

The linking number of a vector field can be used to define the Bennequin number of a transverse knot in $S^3$ and the self-linking number of a knot in $\R^3$.
\vskip0.1cm

 On the 3-sphere, consider the contact form $\zeta = x_1dx_3+x_2dx_4-x_3dx_1-x_4dx_2$
and the corresponding contact distribution
$\mathcal{D}=\{(p,\overrightarrow{\mathbf X})\in \mathrm{T}_p(S^3) \,:\, \zeta|_p(\overrightarrow{\mathbf X})=0\}$.
Note that $\zeta\wedge d\zeta$ is the volume form used to define the orientation $S^3$. The contact distribution is parallelized by the unit vector fields
\begin{equation}\label{TRCNT}
\begin{cases}\overrightarrow{\mathbf{E}}_1:\verb"X"(x_1,x_2,x_3,x_4)\in S^3\mapsto (-x_4,x_3,-x_2,x_1)\in T_{\verb"P"}(S^3),\\
\overrightarrow{\mathbf{E}}_2:\verb"X"(x_1,x_2,x_3,x_4)\in S^3\mapsto (-x_2,x_1,x_4,-x_3)\in T_{\verb"P"}(S^3).
\end{cases}
\end{equation}
In particular, it admits nowhere vanishing global sections.

\begin{defn}\label{Bennequin}
Let $\mathrm{K}\subset S^3$ be a knot everywhere transverse to the contact distribution $\mathcal{D}$ and let $\overrightarrow{\mathbf X}$ be a nowhere vanishing cross section of $\mathcal{D}$.
The {\it Bennequin number of $\mathrm{K}$}, denoted by $\mathbf{b}(\mathrm{K})$, is
the integer $\mathrm{Lk}_{\mathrm{K}}(\overrightarrow{\mathbf X})$ \cite{Fu,Et2}.
%
%The integer $\mathrm{Lk}_{\mathrm{K}}(\overrightarrow{X})$, denoted by $\mathbf{b}(\mathrm{K})$, is referred to as the {\it Bennequin number of $\mathrm{K}$} \cite{Fu,Et2}.
%
It is easily seen that the definition is independent
of
%the choice of
the section, so that we can choose indifferently $\overrightarrow{\mathbf X}=\overrightarrow{\mathbf{E}}_1$ or $\overrightarrow{\mathbf X}=\overrightarrow{\mathbf{E}}_2$.
It is important to note that the Bennequin number is invariant under transversal isotopies.
\end{defn}

Let $\mathrm{K}$ be a knot without inflection points and let $(\overrightarrow{\mathbf T},\overrightarrow{\mathbf N},\overrightarrow{\mathbf B})$ be its Frenet frame field.

\begin{defn}\label{selflinking}
The linking number $\mathrm{Lk}_{\mathrm{K}}(\overrightarrow{\mathbf{N}})$ of the unit normal along $\mathrm{K}$ is called the {\it self-linking number} of $\mathrm{K}$ and will be denoted by $\mathrm{SL}(\mathrm{K})$.
\end{defn}

In the literature, the Bennequin number of a transverse knot of $S^3$ is
sometime referred to as the self-linking number of the knot. To avoid confusion, we will
not follow this practice. The self-linking number is invariant only under regular isotopies, i.e.,
isotopies through simple closed curves without inflection points. In this respect, it is a
geometric invariant, but not a topological one.
%
%For instance, in Euclidean 3-space there are torus knots
%of the same type with different self-linking numbers \cite{Ba}.
%
Examples of torus knots in Euclidean space of the same type but with different self-linking
numbers are given in \cite{Ba}.

\begin{remark}
In some cases, the self-linking number can be computed in terms of the {\it integral writhe}
of the knot relative to a fixed nonzero vector $\overrightarrow{v}\in \R^3$ in general position with respect to $\mathrm{K}$ (i.e., the projection $\mathrm{K}_o$ of $\mathrm{K}$ onto a plane orthogonal to
$\overrightarrow{v}$ is an immersed curve with ordinary double points). We recall that the integral
writhe of $\mathrm{K}$ with respect to $\overrightarrow{v}$, denoted by
$\mathrm{Wr}_{\mathrm{K}}(\overrightarrow{v})$, is the sum of the indices of the double
points of $\mathrm{K}_o$. The index of a double point $p\in \mathrm{K}_o$ can be defined as
in Remark \ref{linking1}. Namely, we take a parametrization
$\gamma_o:\R\to [\overrightarrow{v}]^{\perp}\subset \R^3$ of $\mathrm{K}_o$, induced by a
periodic parametrization $\gamma:\R\to \R^3$ of $\mathrm{K}$. By possibly replacing $\overrightarrow{v}$ with $-\overrightarrow{v}$, we may assume that $\gamma_o$ is compatible with the counterclockwise orientation of $\mathrm{K}_o$ with respect to normal unit vector $\overrightarrow{v}$.
If $p$ is a double point of $\mathrm{K}_o$, let $t,t'\in [0,\omega)$ such that
$\gamma_o(t)=\gamma_o(t')=p$ and $\gamma_3(t)>\gamma_3(t')$ (i.e., $\gamma(t)-\gamma(t')$
is a positive multiple of the unit vector $\overrightarrow{v}$). Then, the index of $p$ is $\epsilon(p)=\mathrm{sgn}(\overrightarrow{v}\cdot (\gamma'|_{t}\times \gamma'|_{t'})$ and
the integral writhe of $\mathrm{K}$ with respect to $\overrightarrow{v}$ is defined by
$$
 \mathrm{Wr}_{\mathrm{K}}(\overrightarrow{v})=\sum_{p\in \mathrm{K}_o^*}\epsilon(p),
  $$
where $\mathrm{K}_o^*$ is the set of double points of $\mathrm{K}_o$. Note that the integral writhe coincides with $\mathrm{Lk}_{\mathrm{K}}(\overrightarrow{v})$, the linking number of the constant vector field $\overrightarrow{v}$ along the knot $\mathrm{K}$. If the projection $K_o$ is a locally convex curve, then
$$
 \mathrm{SL}(\mathrm{K})=\mathrm{Wr}_{\mathrm{K}}(\overrightarrow{v}).
  $$
The proof of this assertion can be found in \cite{Po}.
\end{remark}

\begin{ex}\label{SLSTK}
 Consider the torus knot $\check{\mathrm{K}}_{p,q}\subset \R^3$ of type $(q,p)$ parametrized by
\begin{equation}\label{TK3}
\check{\Gamma}_{p,q}(t)=\frac{1}{4\cos(pt)-5}\left(3\sin(qt),4\sin(pt),-3\cos(qt)\right).
\end{equation}
It is a computational matter to check that $\check{\mathrm{K}}_{p,q}$ has no inflection points and that its projection onto the $xz$-plane is locally convex and possesses $pq-p$ double points, all of them with index 1 (see Figure \ref{FIG7}).
From this we can conclude that $\mathrm{SL}(\check{\mathrm{K}}_{p,q})=pq-p$.
\end{ex}

The self-linking number can be used for computing the linking number of a nowhere vanishing vector field $\overrightarrow{\mathbf X}$ normal to a knot $\mathrm{K}\subset \R^3$.
In fact, by the C\u{a}lug\u{a}reanu--Fuller--Pohl formula \cite{Ca,Fu,MR,Po,RN}, we have
\begin{equation}\label{CFP}
 \mathrm{Lk}_{\mathrm{K}}(\overrightarrow{\mathbf{X}})=\mathrm{SL}(\mathrm{K})+
  \Theta_{\mathrm{K}}({\overrightarrow{\mathbf{X}}}),
   \end{equation}
where $\Theta_{\mathrm{K}}({\overrightarrow{\mathbf{X}}})$ is the {\it rotation number}
of $\overrightarrow{\mathbf X}$ along $\mathrm{K}$, that is, the degree of the map
\[
 \psi_{\mathrm{K},\overrightarrow{\mathbf{X}}}:\mathrm{K}\ni p\mapsto (\overrightarrow{\mathbf{X}}|_p\cdot \overrightarrow{\mathbf{N}}|_p,
  \overrightarrow{\mathbf{X}}|_p\cdot \overrightarrow{\mathbf{B}}|_p)\in \R^2\setminus\{(0,0)\}.
   \]
Actually, to compute the rotation number one can replace
$\overrightarrow{\mathbf B}$ and $\overrightarrow{\mathbf N}$ by $\overrightarrow{S}_b=\gamma'\times \gamma''$ and $\overrightarrow{S}_n=-\gamma'\times \overrightarrow{S}_{b}$,
where $\gamma$ is any parametrization of the knot. Note that the rotation number and the linking number of $\overrightarrow{\mathbf X}$ depend only on the direction of the vector field and not on its norm.
For this reason, in the literature it is often assumed that $\overrightarrow{\mathbf X}$ is a unit normal
vector field along the knot.

\subsection{The symplectic group and the standard contact structure of $S^3$}

Let $\omega(\verb"X",\verb"Y")=\verb"X"^1\verb"Y"^3+
\verb"X"^2\verb"Y"^4-\verb"X"^3\verb"Y"^1-\verb"X"^4\verb"Y"^2$ be the standard symplectic form of $\R^4$, let $\mathrm{Sp}(4,\R)$ be the linear symplectic group of $\omega$, and let $\mathfrak{sp}(\R^4,\omega)$ denote
its Lie algebra. The elements of $\mathfrak{sp}(\R^4,\omega)$ are the $4\times 4$ matrices of the form
%\[
$\verb"A"(\mathbf{a},\mathbf{b},\mathbf{c})=
 \left(\begin{smallmatrix}
 \mathbf a & \mathbf b\\
 \mathbf c & -{}^t\!\mathbf a
 \end{smallmatrix}
 \right)$,
% \sum_{i,j=1}^{2}\left(a^i_j (\widetilde{E}_i^j  -\widetilde{E}_{i+2}^{j+2})
%   + c^i_j \widetilde{E}_{i+2}^{j} + b^i_j\widetilde{E}_{i}^{j+2}\right),
%
%   \]
where $\mathbf{a}=(a^i_j)$, $\mathbf{b}=(b^i_j)$, $\mathbf{c}=(c^i_j)\in \R(2,2)$, and
$\mathbf{c}= {}^t\!\mathbf{c}$, $\mathbf{b}= {}^t\!\mathbf{b}$.
There are two transitive actions of the symplectic group playing an essential role in our discussion.
The first is the action on the unit sphere $S^3\subset \R^4$ defined by
\[
 \mathrm{Sp}(4,\R)\times S^3 \ni (\verb"X",\verb"x")\mapsto \verb"X"\star \verb"x"
 =|\verb"X"\cdot \verb"x"|^{-1} \verb"X"\cdot \verb"x"\in S^3.
  \]
Such an action preserves the standard oriented contact structure on $S^3$ defined by the 1-form $\zeta$.
Next, consider the 6-dimensional vector space $\Lambda^2(\R^4)$ of skew-symmetric 2-forms of $\R^4$.
The linear symplectic group acts on the left on $\Lambda^2(\R^4)$ by
$\verb"X"\star \alpha(\verb"x",\verb"y")=\alpha(\verb"X"^{-1}\cdot\verb"x",\verb"X"^{-1}\cdot\verb"y")$,
for every $\verb"X"\in \mathrm{Sp}(\R^4,\omega)$, $\alpha\in \Lambda^2(\R^4)$ and $\verb"x",\verb"y"\in \R^4$.
The vector space $\Lambda^2(\R^4)$ can be equipped with the $\mathrm{Sp}(\R^4,\omega)$-invariant
neutral scalar product $(\alpha, \beta)$, defined by
$(\alpha,\beta)\omega^2=\alpha\wedge \beta$, $\forall\,\, \alpha,\beta\in \Lambda^2(\R^4)$.
By construction, $\omega$ is a spacelike vector so that its polar space $[\omega]^{\perp}$ is a
5-dimensional invariant subspace of signature $(2,3)$. Let $(\verb"e"_1,\verb"e"_2,\verb"e"_3,\verb"e"_4)$
be the canonical basis of $\R^4$ and $(\verb"e"^1,\verb"e"^2,\verb"e"^3,\verb"e"^4)$ its dual basis.
Then, the elements
\[
\begin{array}{ll}
%\begin{cases}
\verb"E"_0=\verb"e"^1\wedge \verb"e"^2,&
\verb"E"_1=\frac{1}{\sqrt{2}}(\verb"e"^1\wedge \verb"e"^4-\verb"e"^2\wedge \verb"e"^3),\\
\verb"E"_2=\frac{1}{\sqrt{2}}(\verb"e"^1\wedge \verb"e"^4+\verb"e"^2\wedge \verb"e"^3),&
\verb"E"_3=\frac{1}{\sqrt{2}}(\verb"e"^1\wedge \verb"e"^3-\verb"e"^2\wedge \verb"e"^4),\\
\verb"E"_4=-\verb"e"^3\wedge \verb"e"^4 & \\
%\end{cases}
\end{array}
\]
constitute a basis of $[\omega]^{\perp}$ such that $(\verb"E"_i,\verb"E"_j)=
\mathtt{m}_{ij}$.
Consequently, $\R^{2,3}$ can be identified with $[\omega]^{\perp}$ by the linear isometry
\[
 \widehat{\varrho}: \R^{2,3}\ni \sum_{j=0}^{4} x^jM^{o}_j
  \mapsto \sum_{j=0}^{4} x^j\verb"E"_j\in [\omega]^{\perp}\subset \Lambda^2(\R^4).
   \]
For $\verb"X"\in \mathrm{Sp}(\R^4,\omega)$, let $\varrho(\verb"X")$ be the automorphism
of $\R^{2,3}$ defined by $V\mapsto \widehat{\varrho}^{-1}\left(\verb"X" \cdot \widehat{\varrho}(V) \right)$.
The map
$\varrho: \mathrm{Sp}(\R^4,\omega)\to \mathrm{A}^{\uparrow}_+(2,3)$, $\verb"X"\mapsto \varrho(\verb"X")$,
%$\varrho: \verb"X"\in \mathrm{Sp}(\R^4,\omega)
%\mapsto \varrho(\verb"X")\in \mathrm{A}^{\uparrow}_+(2,3)$
is a 2:1 group covering homomorphism whose kernel is the center $\mathcal{Z}=\{\pm \mathrm{Id}_{4\times 4}\}\cong \mathbb{Z}_2$ of $\mathrm{Sp}(4,\R)$.
The corresponding induced Lie algebra isomorphism is given by
$\varrho_*:\mathfrak{sp}(\R^4,\omega)\to \mathfrak{a}(2,3)$,
$\verb"A"(\mathbf{a},\mathbf{b},\mathbf{c})\mapsto \sum_{i,j}\widehat{\varrho}_*(\mathbf{a},\mathbf{b},\mathbf{c})_j^i M_i^o\otimes M_o^j$,
%
%The Lie algebra isomorphism
%$\varrho_*:\mathfrak{sp}(\R^4,\omega)\to \mathfrak{a}(2,3)$
%induced by $\varrho$ can be easily computed and, as a result, we find that
%$\varrho_* : \verb"A"(\mathbf{a},\mathbf{b},\mathbf{c})\in \mathfrak{sp}(\R^4,\omega)\mapsto \sum_{i,j}\widehat{\varrho}_*(\mathbf{a},\mathbf{b},\mathbf{c})_j^i M_i^o\otimes M_o^j\in \mathfrak{a}(2,3)$,
%
where $\widehat{\varrho}_*(\mathbf{a},\mathbf{b},\mathbf{c})\in \mathfrak{m}(2,3)$ is defined by
\[
\begin{split}
 \widehat{\varrho}_*(\mathbf{a},\mathbf{b},\mathbf{c})=&-(a^1_1+a^2_2)M^0_0+ a^2_2M^1_2-(a^1_2+a^2_1)M^1_3
 +(a^1_2-a^2_1)M^2_3 \\
 &  -\frac{1}{\sqrt{2}}(c^1_1+c^2_2)M^0_1+\frac{1}{\sqrt{2}}(c^1_1-c^2_2)M^0_2+\sqrt{2}c^1_2M^0_3\\
 &  +\frac{1}{\sqrt{2}}(b^1_1+b^2_2)M^4_1+\frac{1}{\sqrt{2}}(b^1_1-b^2_2)M^4_2+\sqrt{2}b^1_2M^4_3.
  \end{split}
  \]
%
%\[
%\begin{split}
% \widehat{\varrho}_*(\mathbf{a},\mathbf{b},\mathbf{c})=&-(a^1_1+a^2_2)M^0_0+(a^2_2-a^1_1)M^1_2-(a^1_2+a^2_1)M^1_3
% +(a^2_1-a^1_2)M^2_3 \\
% &  +\frac{1}{\sqrt{2}}(c^1_1+c^2_2)M^4_1+\frac{1}{\sqrt{2}}(c^1_1-c^2_2)M^4_2+\sqrt{2}c^1_2M^4_3)\\
% &  -\frac{1}{\sqrt{2}}(b^1_1+b^2_2)M^0_1+\frac{1}{\sqrt{2}}(b^1_1-b^2_2)M^0_2+\sqrt{2}b^1_2M^1_2.
%  \end{split}
%  \]
%

\begin{remark}[A Lagrangian model for the Einstein universe]
%We briefly comment on another model for the Einstein universe based on the spin covering homomorphism %$\varrho:\mathrm{Sp}(4,\R)\to \mathrm{A}^{\uparrow}_+(2,3)$.
The symplectic form $\omega$ induces an
invariant pairing between $\R_4$, the dual of $\R^4$, and $\R^4$ given by
\[
  \R_4 \ni \verb"x"= x_1\verb"e"^1+x_2\verb"e"^2+x_3\verb"e"^3+x_4\verb"e"^4 \mapsto \verb"x"^{\natural}
     = x_3\verb"e"_1+x_4\verb"e"_2-x_1\verb"e"_3-x_2\verb"e"_4\in \R^4.
      \]
%where $(\verb"e"_1,\dots,\verb"e"_4)$ stands for the canonical basis of $\R^4$
%and $(\verb"e"^1,\dots,\verb"e"^4)$ for its dual basis.
The linear isometry $\widehat{\varrho}:\R^{2,3}\to [\omega]^{\perp}\subset \Lambda^2(\R^4)$
takes the null vectors of $\R^{2,3}$ to the decomposable 2-forms orthogonal to $\omega$.
Any $[X]\in \mathcal{M}^{1,2}$ determines a unique oriented line
$[\widehat{\varrho}(X)]\subset [\omega]^{\perp}$ spanned by a decomposable 2-form
$\widehat{\varrho}(X)=\alpha(X)\wedge \beta(X)$, such that $\widehat{\varrho}(X)\wedge \omega=0$.
The latter identity means that the 2-plane
$\widehat{\varrho}(X)^{\perp}=\{\verb"w"\in \R^4: i_{\verb"w"}\widehat{\varrho}(X) =0\}$
is Lagrangian. Such a plane is oriented by the basis $(\alpha(X)^{\natural},\beta(X)^{\natural})$, so that $\mathcal{M}^{2,1}$ can be identified with $\mathcal{L}^2_+(\R^4,\omega)$, the manifold of oriented Lagrangian
planes of $(\R^4,\omega)$, by the map $\mathcal{M}^{1,2}\ni [X]\mapsto [\alpha(X)^{\natural}\wedge \beta(X)^{\natural}]\in \mathcal{L}_+^2(\R^4,\omega)$.
\end{remark}

\begin{figure}[ht]
\begin{center}
\includegraphics[height=6.2cm,width=6.2cm]{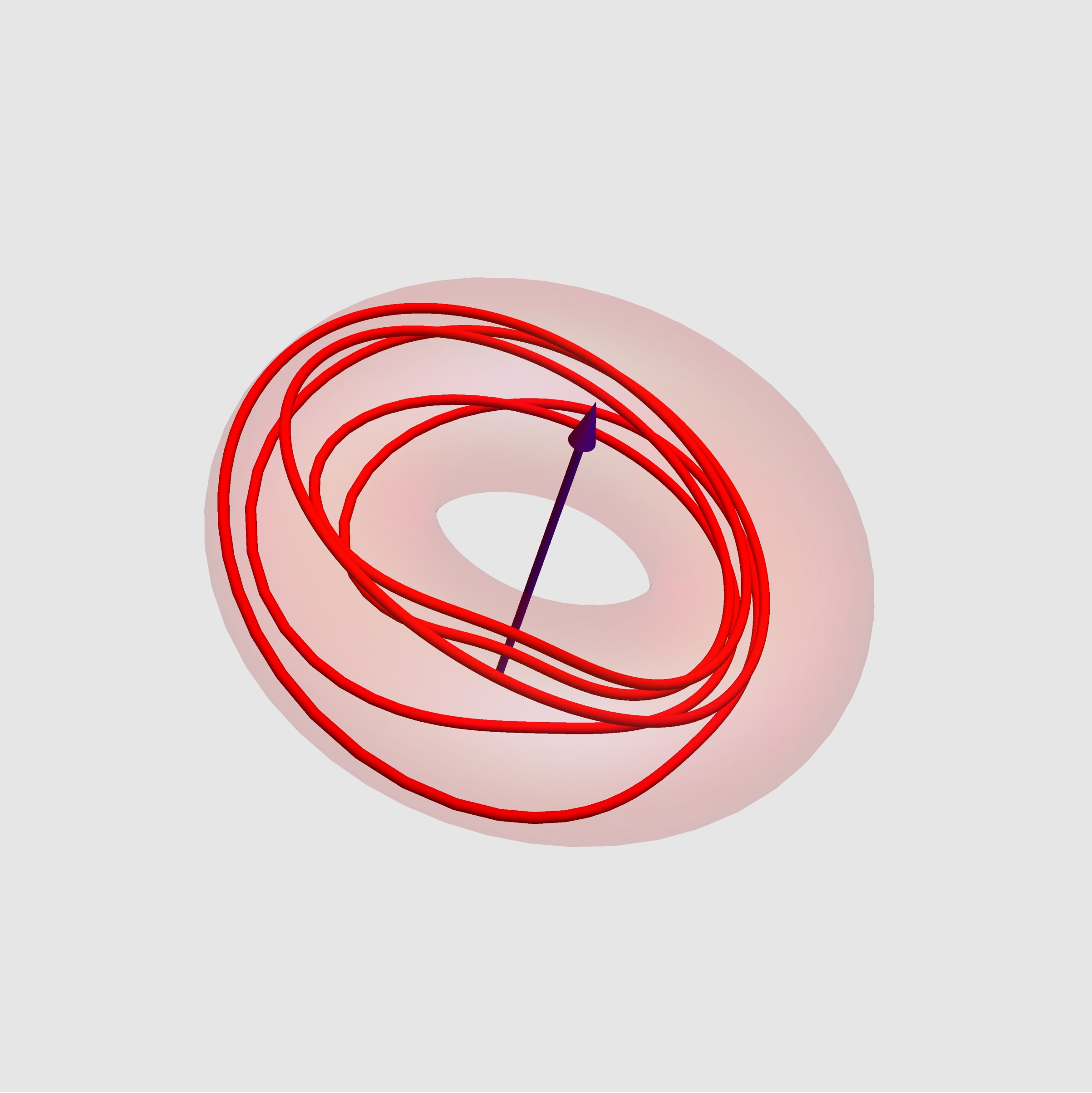}
\includegraphics[height=6.2cm,width=6.2cm]{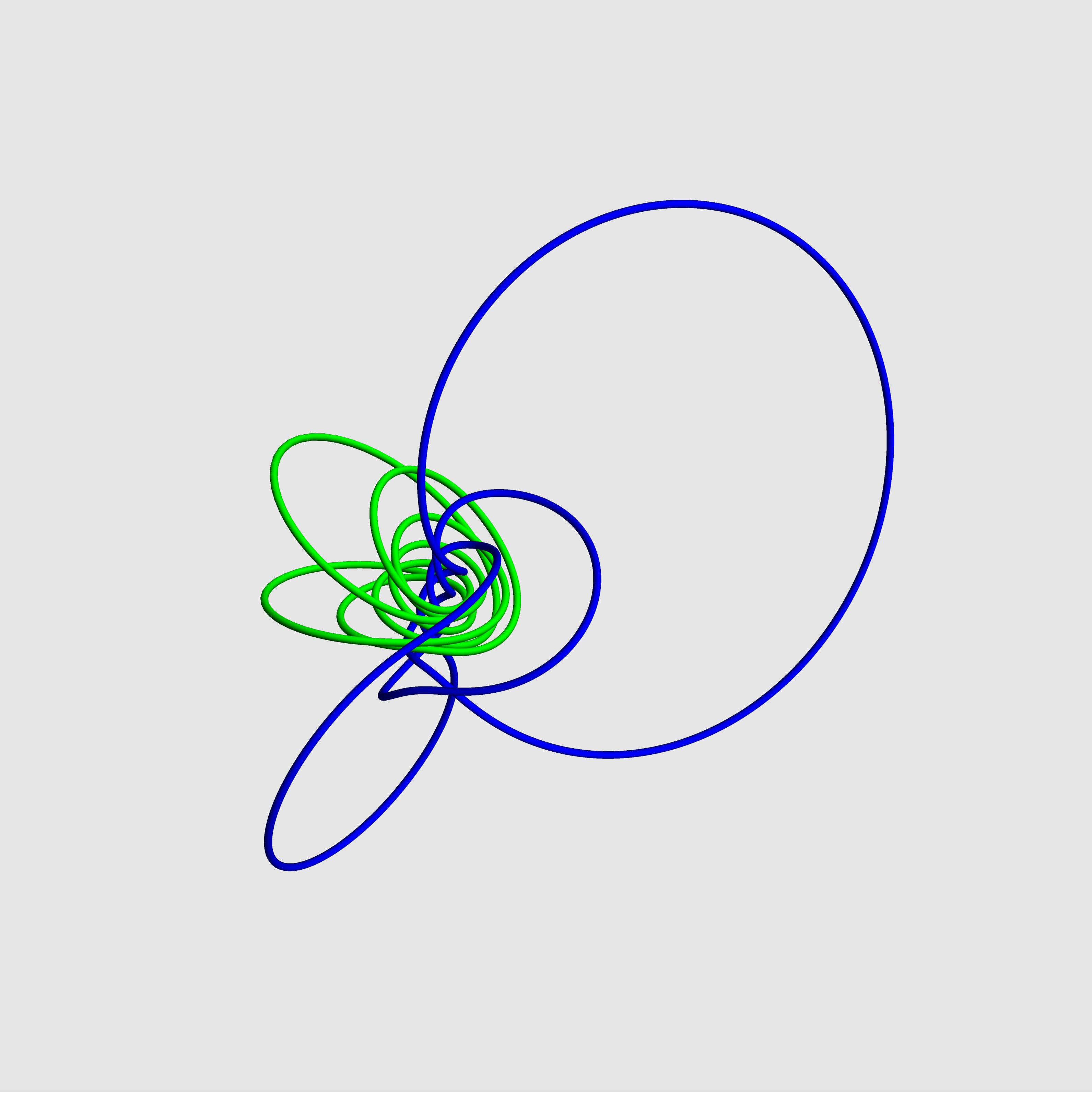}
\caption{A closed homogenoeus curve with $b=2/5$ trapped in the adS chamber (on the left) and the stereographic projections of its directrices from the point $(1,0,0,0)$.}\label{FIG6}
\end{center}
\end{figure}

\subsection{Directrices}
Let $\gamma:I\to \mathcal{E}^{1,2}$ be a generic timelike curve parametrized by the conformal
parameter and let $\mathbf{M}:I\to \mathcal{M}$ be its canonical frame.
Using the identification of $\mathcal{M}$ with $\mathrm{A}^{\uparrow}_+(2,3)$, let
$\widetilde{\mathbf{M}}:\R\to \mathrm{Sp}(4,\R)$ be a lift of $\mathbf{M}$ to $\mathrm{Sp}(4,\R)$.
Then, it follows that $\Gamma= |\widetilde{\mathbf{M}}_3|^{-1}\widetilde{\mathbf{M}}_3$ and $\Gamma^*= |\widetilde{\mathbf{M}}_4|^{-1}\widetilde{\mathbf{M}}_4$ are smooth maps into the 3-sphere, which
are referred to as the {\it directrices} of $\gamma$. Using the isomorphism $\varrho_*$, it
is easily seen that $\widetilde{\mathbf{M}}$ satisfies $\widetilde{\mathbf{M}}'=\widetilde{\mathbf{M}}\widetilde{\mathcal{K}}(h,k)$, where $k$ and $h$
are the conformal curvatures of $\gamma$ and $\widetilde{\mathcal{K}}(h,k)$ is
the $\mathfrak{sp}(4,\R)$-valued map
\begin{equation}\label{s6F1}
\widetilde{\mathcal{K}}(h,k)=
\left(
\begin{array}{cccc}
0 & -k/2 & -1/\sqrt{2} & 0 \\
k/2 & 0 & 0 & -1/\sqrt{2} \\
(h-1)/\sqrt{2} & 0 & 0 & -k/2 \\
0 & (h+1)/\sqrt{2} & k/2 & 0 \\
\end{array}
\right).
\end{equation}
This implies that the directrices are immersed curves of $S^3$, everywhere transverse to the
contact distribution of $S^3$. If $\gamma$ is a simple closed curve, with minimal period $\ell$, then
$\Gamma$ and $\Gamma^*$ are periodic, with minimal period $\widetilde{\ell}=\ell/\sigma^*$,
where $\sigma^*\in \{1,1/2\}$ is the {\it symplectic spin} of $\gamma$. Note that $\sigma^*$ is invariant
by regular timelike homotopies (i.e., through generic timelike curves). Consequently,
$\Gamma$ and $\Gamma^*$ originate smooth immersions of the circle $\R/\widetilde{\ell} \mathbb{Z}$
into the 3-sphere.  If the trajectories of $\Gamma$ and $\Gamma^*$ are simple and disjoint,
one can compute the linking number $\mathrm{Lk}(\Gamma,\Gamma^*)$ and the Bennequin numbers
$\mathbf{b}(\Gamma)$ and $\mathbf{b}(\Gamma^*)$.
These three integers provide global conformal invariants of $\gamma$, different in general
from the Maslov index of $\gamma$. The next Proposition explains how to compute these invariants for a
closed homogeneous timelike curve of the class $C_{2.i}$ considered in Section \ref{s:constant curvatures}
(see Figure \ref{FIG6}).

\begin{prop}
Let $\gamma$ be a closed homogeneous curve of the second class and of the first type,
with parameters $a\in (0,1)$ and $0<b=m/n<1$. Then, the following hold true:

\begin{enumerate}

\item $n$ is its Maslov index;

\item the curve has symplectic spin $1/2$;

\item if $p$ and $q$ are the relatively prime positive integers defined by $p/q=(n-m)/(n+m)$, then its directrices $\Gamma$ and $\Gamma^*$ are transversal positive torus knots of type $(q,p)$ such that $\mathrm{Lk}(\Gamma,\Gamma^*)=pq$ and $\mathbf{b}(\Gamma)=\mathbf{b}(\Gamma^*)=pq - (p+q)$.
\end{enumerate}
\end{prop}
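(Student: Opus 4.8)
\emph{Strategy.} The curve $\gamma$ is, after a conformal change and a reparametrization, the orbit of an explicit one‑parameter subgroup of the maximal compact $\mathrm{SO}(2)\times\mathrm{SO}(3)\subset\mathrm{A}^{\uparrow}_+(2,3)$. I would lift this subgroup to $\mathrm{Sp}(4,\R)$ and identify the directrices as the orbits of two points of $S^3$ under the lifted subgroup, which makes them visibly torus knots. First, all the quantities in the statement are conformal invariants: $\mathrm{A}^{\uparrow}_+(2,3)$ is connected and preserves future‑directed timelike curves (so the degree of the time‑component and the symplectic spin are locally constant along its orbits), and every element of $\mathrm{Sp}(4,\R)$ is joined to the identity by a path, hence acts on $S^3$ by a contactomorphism transversally isotopic to the identity, which preserves linking and Bennequin numbers. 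Since $\mathrm{A}^{\uparrow}_+(2,3)$ acts simply transitively on Poincar\'e bases, I may assume $(P_0,\dots,P_4)=(P^o_0,\dots,P^o_4)$. Then $\gamma_{(a,b)}(t)$ already lies on $\mathcal{E}^{1,2}$ (its $\mathbb{V}^2_-$‑ and $\mathbb{V}^3_+$‑components have unit norm), its time‑component is $\eta(t)=\cos t\,P^o_0+\sin t\,P^o_1$, and $\gamma(t)=\Phi(t)\gamma(0)$ where $\Phi(t)\in\mathrm{SO}(2)\times\mathrm{SO}(3)$ rotates $\mathbb{V}^2_-$ at unit speed and the plane $[P^o_3\wedge P^o_4]\subset\mathbb{V}^3_+$ at speed $b$; by equivariance of the canonical frame, $\mathbf{M}(t)=\Phi(t)\mathbf{M}(0)$.

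\emph{Maslov index and symplectic lift.} Closedness forces $t\in 2\pi\Z$ and $bt\in 2\pi\Z$, so with $b=m/n$ in lowest terms the minimal period of $\gamma$ is $\ell_0=2\pi n$, over which $\eta$ winds $n$ times; this gives (1). Next, lift $\Phi$ to the unique one‑parameter subgroup $\widetilde{\Phi}$ of $\mathrm{Sp}(4,\R)$ covering it. Using $\varrho_*$ together with the fact that the restricted covering $\mathrm{U}(2)\to\mathrm{SO}(2)\times\mathrm{SO}(3)$ is the double cover on which the central $\mathrm{U}(1)$ covers the $\mathrm{SO}(2)$‑factor twice and $\mathrm{SU}(2)$ covers $\mathrm{SO}(3)$ twice, one finds, in suitable complex coordinates $\R^4\cong\C^2$, that
\[
 \widetilde{\Phi}(t)=\mathrm{diag}\big(e^{i(1+b)t/2},\,e^{i(1-b)t/2}\big);
\]
equivalently, one diagonalizes $\widetilde{\mathcal{K}}(h,k)$ of \eqref{s6F1} with $(h,k)=(h_{(a,b)},k_{(a,b)})$, whose $\R^4$‑eigenvalues are $\pm i(1\pm b)/(2\upsilon)$ with $\upsilon$ the constant strain density. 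Since $\widetilde{\mathbf{M}}(t)=\widetilde{\Phi}(t)\widetilde{\mathbf{M}}(0)$, the directrices $\Gamma$ and $\Gamma^*$ are the $\widetilde{\Phi}$‑orbits of the points $[w_3],[w_4]\in S^3$, where $w_3,w_4\in\C^2$ are the third and fourth columns of $\widetilde{\mathbf{M}}(0)$; because the directrices are immersed and are not among the two $\widetilde{\Phi}$‑invariant circles, both complex coordinates of $w_3$ and of $w_4$ are nonzero (a fact one can also read off from the explicit canonical frame built in the proof of Theorem~\ref{thm:reg-hom-cur}).

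\emph{Torus‑knot type, spin, and linking.} Write $\tfrac{1\pm b}{2}=\tfrac{n\pm m}{2n}$ and $d=\mathrm{gcd}(n+m,n-m)$. With $w_3=(c_1,c_2)$, $c_1c_2\neq0$, the time change $s=\tfrac{d}{2n}t$ gives $\Gamma(s)=\big[(e^{iqs}c_1,e^{ips}c_2)\big]$ with $q=(n+m)/d$, $p=(n-m)/d$, $\mathrm{gcd}(p,q)=1$, $q>p$; hence $\Gamma$ lies on a Clifford torus and is an embedded \emph{positive} torus knot of type $(q,p)$ with $p/q=(n-m)/(n+m)$, and likewise for $\Gamma^*$. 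The minimal period of $\Gamma$ (and of $\widetilde{\mathbf{M}}$) in $t$ is $4\pi n/d$; comparison with $\ell_0=2\pi n$ shows $\widetilde{\mathbf{M}}$ has period $2\ell_0$, i.e. the symplectic spin is $\tfrac12$, which is (2). Finally $\Gamma$ and $\Gamma^*$ are disjoint, parallel $(q,p)$‑torus knots lying on two concentric Clifford tori, so — exactly as in Remark~\ref{linking1} and Example~\ref{LSTK}, where after stereographic projection the two projected curves meet in $2pq$ points each of index $+1$ — we obtain $\mathrm{Lk}(\Gamma,\Gamma^*)=pq$.

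\emph{Bennequin numbers and the main difficulty.} By definition $\mathbf{b}(\Gamma)=\mathrm{Lk}_{\Gamma}(\overrightarrow{\mathbf{E}}_1)$; projecting stereographically from a point off $\Gamma$ and using the C\u{a}lug\u{a}reanu--Fuller--Pohl formula \eqref{CFP},
\[
 \mathbf{b}(\Gamma)=\mathrm{SL}(\mathrm{st}(\Gamma))+\Theta_{\mathrm{st}(\Gamma)}(\mathrm{st}_*\overrightarrow{\mathbf{E}}_1).
\]
Choosing the projection point so that $\mathrm{st}(\Gamma)$ is regularly isotopic to the locally convex model $\check{\mathrm{K}}_{p,q}$ of Example~\ref{SLSTK} gives $\mathrm{SL}(\mathrm{st}(\Gamma))=pq-p$, and a direct winding‑number computation of $\mathrm{st}_*\overrightarrow{\mathbf{E}}_1$ against the surface frame $\overrightarrow{S}_b=\gamma'\times\gamma''$, $\overrightarrow{S}_n=-\gamma'\times\overrightarrow{S}_b$ along $\mathrm{st}(\Gamma)$ yields rotation number $-q$, whence $\mathbf{b}(\Gamma)=pq-p-q$, and the same for $\Gamma^*$. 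I expect the main obstacle to be precisely this last step: producing an explicit locally convex planar model for $\mathrm{st}(\Gamma)$ (so that the self‑linking number is computable) and evaluating the rotation number of the contact framing relative to the Frenet/surface framing — both are orientation‑sensitive and require care; verifying that $\Gamma$ and $\Gamma^*$ are simple and disjoint, i.e. the genericity of $w_3,w_4$, is a necessary preliminary.
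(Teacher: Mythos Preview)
Your approach mirrors the paper's: both lift the canonical frame to $\mathrm{Sp}(4,\R)$, diagonalize the generator of the resulting one-parameter subgroup (eigenfrequencies proportional to $(1\pm b)/2$), read off the torus-knot type of the directrices, compute the linking number via a link diagram, and obtain the Bennequin number from the C\u{a}lug\u{a}reanu--Fuller--Pohl formula with $\mathrm{SL}=pq-p$ and rotation number $-q$. The paper works in the conformal parameter and diagonalizes $\widetilde{\mathcal{K}}(h,k)$ directly, while you work in the original parameter $t$ via the $\mathrm{U}(2)\to\mathrm{SO}(2)\times\mathrm{SO}(3)$ cover; the substance is the same, and your sketch of the Bennequin step matches the paper's computation essentially line for line.

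There is, however, one genuine gap in your spin argument. You correctly compute the minimal period of $\widetilde{\Phi}$ (hence of $\Gamma$) in $t$ as $4\pi n/d$ with $d=\gcd(n+m,n-m)$, but then assert this equals $2\ell_0=4\pi n$. That holds only when $d=1$, i.e., when $m$ and $n$ have opposite parities. If $m$ and $n$ are both odd then $d=2$: in that case $\widetilde{\Phi}(\ell_0)=\mathrm{diag}\big(e^{i\pi(n+m)},e^{i\pi(n-m)}\big)=I$, the directrices already close up after time $\ell_0$, and the spin is $1$, not $1/2$. (The paper's own proof glosses over exactly the same point: it gives the period of the directrices as $2\omega$ without reducing the ratio $(n+m):(n-m)$ to lowest terms.) So your passage from ``period $4\pi n/d$'' to ``period $2\ell_0$'' does not go through when $d=2$; either the hypothesis $m\not\equiv n\pmod 2$ must be added or part~(2) amended. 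The remaining items are unaffected: the $(q,p)$-torus-knot identification, the linking number $pq$, and the Bennequin number $pq-(p+q)$ depend only on the reduced pair $(p,q)$ and hold for either value of $d$.
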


\begin{proof}
(1) The assertion follows immediately from formula (\ref{C2ii}) of Theorem \ref{thm:reg-hom-cur}.

(2) Consider the parametrization of $\gamma$ given by the natural parameter so that, up to the action of the conformal group,
\[
 \gamma(u)=[\mathbf{M}^o\cdot \mathrm{Exp}(u \mathcal{K}(h,k))\cdot {}^t\!(1,0,0,0)],
  \]
where $h$ and $k$ are the constants defined as in (\ref{CurvC2}) and $\mathcal{K}(h,k)$ is the element of $\mathfrak{m}(2,3)$ defined by
$\mathcal{K}(h,k):= M^0_2-M^4_1- {k}M^2_3 - {h}M^0_1$.
%$\mathcal{K}(h,k):=M^0_1+M^4_2+kM^2_3+hM^4_1$.
This is a simple element of $\mathfrak{m}(2,3)$ with eigenvalues $0$, $\pm i\lambda_1$, $\pm i\lambda_2$, where
\[
 \lambda_1=\frac{\sqrt{b(1+b^2(a^2-1))}}{(1-a^2)^{1/4}\sqrt{1-b^2}},\quad
 \lambda_2=\frac{\sqrt{1+b^2(a^2-1)}}{(1-a^2)^{1/4}\sqrt{b(1-b^2)}}.
 \]
Since $\lambda_1/\lambda_2=b=m/n$, the minimal period of $\gamma$ (i.e., the total strain of the curve) is
\begin{equation}\label{per1}
\omega(a,m/n)=
 \frac{2\pi n}{\lambda_2}=2\pi\frac{(1-a^2)^{1/4}\sqrt{m(n^2-m^2)}}{\sqrt{n(n^2-(1-a^2)m^2)}}.
  \end{equation}
Let $\widetilde{\mathcal{K}}(h,k))\in \mathfrak{sp}(4,\R)$ be given as in (\ref{s6F1}). Then, putting
\[
 \mathrm{C}=\mathrm{Exp}(u \widetilde{\mathcal{K}}(h,k))\cdot \verb"e"_3,\quad \mathrm{C}^*=\mathrm{Exp}(u \widetilde{\mathcal{K}}(h,k))\cdot \verb"e"_4,
  \]
the two directrices of $\gamma$ are parameterized by
\[
 \Gamma : u\to \|\mathrm{C}(u)\|^{-1}\mathrm{C}(u),\quad
  \Gamma^* : u\to \|\mathrm{C}^*(u)\|^{-1}\mathrm{C}^*(u),
  \]
respectively. The matrix $\widetilde{\mathcal{K}}(h,k)$ is a simple element and has four purely imaginary distinct eigenvalues, $\pm i\widetilde{\lambda}_1$, $\pm i\widetilde{\lambda}_2$, where
\[
 \widetilde{\lambda}_1=\frac{\sqrt{(1+b)(1+(a^2-1)b^2)}}{2(1-a^2)^{1/4}\sqrt{b(1-b)}},
  \quad \widetilde{\lambda}_2=\frac{\sqrt{(1-b)(1+(a^2-1)b^2)}}{2(1-a^2)^{1/4}\sqrt{b(1+b)}}.
   \]
Thus, $\widetilde{\lambda}_1/\widetilde{\lambda}_2=(n+m)/(n-m)$. This implies that the two directrices
have minimal period $\omega_*(a,m/n)=2\pi(n+m)/\widetilde{\lambda}_2(n-m)=2\omega(a,m/n)$.
This proves that the symplectic spin of the curve is $1/2$.

(3) We first write the directrices of $\gamma$ in a canonical form. To this end, we consider the eigenvectors
$\verb"U"_1$ and $\verb"U"_2$ of the eigenvalues $-i\widetilde{\lambda}_1$ and $-i\widetilde{\lambda}_2$,
respectively, and denote by $\verb"V"_j$ and $\verb"W"_j$, $j=1,2$, their real and imaginary parts.
Then, the matrix $\verb"B"$ with column vectors
\[
\begin{array}{ll}
\verb"B"_1=\verb"V"_1/\sqrt{-\omega(\verb"V"_1,\verb"W"_1)},& \verb"B"_2=\verb"V"_2/\sqrt{-\omega(\verb"V"_2,\verb"W"_2)}, \\ \verb"B"_3=\verb"W"_1/\sqrt{-\omega(\verb"V"_1,\verb"W"_1)}, & \verb"B"_4=\verb"W"_2/\sqrt{-\omega(\verb"V"_2,\verb"W"_2)}\\
\end{array}
\]
belongs to $\mathrm{Sp}(4,\R)$ and a direct computation shows that
\begin{equation}\label{STK1}
\begin{cases}
\verb"B"\cdot \Gamma(u) =\left(\frac{-2A\sin(qu)}{1+A^2},\frac{(1-A^2)sin(pu)}{1+A^2},\frac{2A\cos(qu)}{1+A^2},
\frac{(A^2-1)\cos(pu)}{1+A^2}\right),\\
\verb"B"\cdot \Gamma^*(u)=\left(\frac{(1-A^2)\cos(qu)}{1+A^2},\frac{2A\cos(pu)}{1+A^2},\frac{(1-a^2)\sin(qu}{1+A^2},
\frac{2A\sin(pu)}{1+A^2}\right),
\end{cases}
\end{equation}
where
$$
  A=\frac{\sqrt{1-\sqrt{1+a^2}}}{\sqrt{2}-\sqrt{1+\sqrt{1+a^2}}}\in (1+\sqrt{2},+\infty).
  $$
Since all our considerations are invariant under the action of the symplectic group,
we may assume that the directrices are the transversal knots $\widehat{\mathrm{K}}_{A,p,q}$ and $\widehat{\mathrm{K}}^*_{A,p,q}$ of $S^3$ parameterized by the maps $\Gamma_{A,p,q}$ and $\Gamma_{A,p,q}$
in the left hand side of (\ref{STK1}). Since the ``point at infinity'' $\verb"P"={}^t\!(0,0,0,1)$
is not contained in by $\widehat{\mathrm{K}}_{A,p,q}\cup \widehat{\mathrm{K}}^*_{A,p,q}$,
we may consider their images $\mathrm{K}_{A,p,q}$ and $\mathrm{K}^*_{A,p,q}\subset \R^3$ under
the conformal map $\mathrm{st}:S^3\setminus\{\verb"P"\}\to \R^3$.\footnote{The map $\mathrm{st}$ is the
composition of the stereographic projection form the point at infinity and the reflection about the $xy$-plane
of $\R^3$.} It is now an easy task to check that the link $(\mathrm{K}_{A,p,q},\mathrm{K}^*_{A,p,q})$ is isotopic
to $(\mathrm{K}_{p,q}, \mathrm{K}^*_{p,q})$, where
$\mathrm{K}_{p,q}$ and $\mathrm{K}^*_{p,q}$ are the torus knots defined as in (\ref{STK}). This implies
\[
 \mathrm{Lk}(\widehat{\mathrm{K}}_{A,p,q},\widehat{\mathrm{K}}^*_{A,p,q})=
  \mathrm{Lk}(\mathrm{K}_{A,p,q},\mathrm{K}^*_{A,p,q})=\mathrm{Lk}(\mathrm{K}_{p,q}, \mathrm{K}^*_{p,q})=pq.
   \]
For fixed integers $p,q$, the knots $\widehat{\mathrm{K}}_{A,p,q}$, $A>1$, are transversal to the contact distribution and transversally isotopic to each other. Thus, the Bennequin numbers do no depend on the parameter $A$.
Let $(\overrightarrow{\mathbf{E}}_1,\overrightarrow{\mathbf{E}}_2)$ be the parallelization of the contact distribution of $S^3$ defined as in (\ref{TRCNT}) and let $\overrightarrow{\mathbf{Y}}_{p,q}$ be the vector field
\[
 \overrightarrow{\mathbf{Y}}_{p,q}:\verb"P" (x_1,x_2,x_3,x_4)\mapsto (-qx_3,-px_4 , qx_1,px_2 )\in \mathrm{T}_{\verb"P"}(S^3).
  \]
Then, $(\overrightarrow{\mathbf{Y}}_{p,q},\overrightarrow{\mathbf{E}}_1,\overrightarrow{\mathbf{E}}_2)$ is a parallelization of the 3-sphere and $\widehat{\mathrm{K}}_{A,p,q}$ is the trajectory of
$\overrightarrow{\mathbf{Y}}_{p,q}$ passing through the point ${}^t\!(0,0,2A/(1+A^2),(A^2-1)/(1+A^2))$. Since $\overrightarrow{\mathbf{Y}}_{p,q}$ is tangent to $\widehat{\mathrm{K}}_{A,p,q}$ and transversal to the contact distribution, the vector field
$$
 \overrightarrow{\mathbf{Z}}_{p,q}= \overrightarrow{\mathbf{E}}_1-
 \frac{\overrightarrow{\mathbf{Y}}_{p,q}\cdot \overrightarrow{\mathbf{E}}_1}{\|\overrightarrow{\mathbf{Y}}_{p,q}\|^2}\overrightarrow{\mathbf{Y}}_{p,q}
  $$
is normal to $\widehat{\mathrm{K}}_{A,p,q}$ and, in addition, $\overrightarrow{\mathbf{E}}_2, \overrightarrow{\mathbf{Z}}_{p,q}$ span a 2-dimensional distribution everywhere transverse to $\widehat{\mathrm{K}}_{A,p,q}$. Thus, the vector fields
$\overrightarrow{\mathbf{X}}_{p,q}=\mathrm{st}_*(\overrightarrow{\mathbf{Z}}_{p,q})$ and $\overrightarrow{\mathbf{V}}_{p,q}=\mathrm{st}_*(\overrightarrow{\mathbf{E}}_2)$ generate a plane field distribution transverse to $\mathrm{K}_{A,p,q}$. Taking into account that $\mathrm{st}$ is a conformal map, the vector field $\overrightarrow{\mathbf{X}}_{p,q}$ is normal to $\mathrm{K}_{A,p,q}$.
Choosing $\epsilon>0$ sufficiently small, the image of the map
$$
 (p,\theta)\in \mathrm{K}_{A,p,q}\times \R\mapsto p + \epsilon(\cos(\theta)\overrightarrow{\mathbf{X}}_{p,q}+\sin(\theta)\overrightarrow{\mathbf{V}}_{p,q})\in \R^3
   $$
is an embedded torus whose inner region contains $\mathrm{K}_{A,p,q}$. Since the map
$$
 (p,r)\in \mathrm{K}_{A,p,q}\times [0,1]\mapsto p + \epsilon(\cos(r\pi/2)\overrightarrow{\mathbf{X}}_{p,q}+\sin(r\pi/2)\overrightarrow{\mathbf{V}}_{p,q})
   $$
is a smooth homotopy (in $\R^3\setminus\mathrm{K}_{A,p,q}$) between $\mathrm{K}_{A,p,q}+\epsilon \overrightarrow{\mathbf{V}}_{p,q}$ and $\mathrm{K}_{A,p,q}+\epsilon \overrightarrow{\mathbf{X}}_{p,q}$,
we obtain
\[
 \mathbf{b}(\widehat{\mathrm{K}}_{A,p,q})=
\mathrm{Lk}_{\widehat{\mathrm{K}}_{A,p,q}}(\overrightarrow{\mathbf{E}}_2)=
\mathrm{Lk}_{\mathrm{st}(\widehat{\mathrm{K}}_{A,p,q})}(\mathrm{st}_*(\overrightarrow{\mathbf{E}}_2))=
\mathrm{Lk}_{\mathrm{K}_{A,p,q}}(\overrightarrow{\mathbf{V}}_{p,q})=
\mathrm{Lk}_{\mathrm{K}_{A,p,q}}(\overrightarrow{\mathbf{X}}_{p,q}).
 \]
By construction, the vector field $\overrightarrow{\mathbf{X}}_{p,q}$ is a positive multiple of the normal projection $\overrightarrow{\mathbf W}_{p,q}$ of $\mathrm{st}_*(\overrightarrow{\mathbf E}_1)$ along $\mathrm{K}_{A,p,q}$ so that
\[
 \mathrm{Lk}_{\mathrm{K}_{A,p,q}}(\overrightarrow{\mathbf W}_{p,q})=
  \mathrm{Lk}_{\mathrm{K}_{A,p,q}}(\overrightarrow{\mathbf{X}}_{p,q}).
   \]
This implies that
$\mathbf{b}(\widehat{\mathrm{K}}_{A,p,q})$ coincides
with $\mathrm{Lk}_{\mathrm{K}_{A,p,q}}(\overrightarrow{\mathbf W}_{p,q})$.
The latter is equal to
$$
  \mathrm{SL}(\mathrm{K}_{A,p,q})+\Theta_{\mathrm{K}_{A,p,q}}(\overrightarrow{\mathbf W}_{p,q}).
    $$
Since everything is independent of the parameter $A$, we assume $A=3$.
Then, $\widehat{\mathrm{K}}_{A,p,q}$ is the torus knot parameterized by the map $\check{\Gamma}_{p,q}$ defined as in \eqref{TK3}. From this we get $\mathrm{SL}(\mathrm{K}_{A,p,q})=pq-p$. On the other hand, $\Theta_{\mathrm{K}_{A,p,q}}(\overrightarrow{\mathbf W}_{p,q})$ is the degree of the map
\[
 \phi :  \R/2\pi \mathbb{Z}\ni [u]\mapsto (\overrightarrow{\mathbf W}_{p,q}|_u\cdot \overrightarrow{\mathbf P}_n|_u,
  \overrightarrow{\mathbf W}_{p,q}|_u\cdot \overrightarrow{\mathbf P}_b|_u)\in \R^2\setminus\{(0,0)\},
  \]
where $\overrightarrow{\mathbf P}_b|_u = \check{\Gamma}'_{p,q}\times \check{\Gamma}''_{p,q}|_u$ and
$\overrightarrow{\mathbf P}_n|_u = -\check{\Gamma}'_{p,q}\times \overrightarrow{\mathbf P}_b|_u$.
The components of $\phi$ can be computed by any software of symbolic computation.
As a result $\phi$ can be written as $\phi([u])=(\mathrm{S}_1(u)+\mathrm{S}_2(u))\, {}^t\!(\cos(qu),\sin(qu))$, where $S_1$ is a off-diagonal $2\times 2$ matrix with positive entries and $\mathrm{S}_2$ is a periodic map into $\mathbb{R}(2,2)$ of period $2\pi$ such that
\[
  \mathrm{det}(S_1(u)+rS_2(u))<0,\quad \forall u\in \R,\,\,r\in [0,1].
    \]
Consequently, $\phi$ is homotopic to $\R/2\pi\mathbb{Z} \ni [u]\mapsto {}^t\!(\cos(qu),-\sin(qu))$, and hence
\[
\Theta_{\mathrm{K}_{A,p,q}}(\overrightarrow{\mathbf W}_{p,q})=-q.
   \]
This proves that $\mathbf{b}(\widehat{\mathrm{K}}_{A,p,q})=pq-p-q$, as claimed.

It is now easy to check that $\Gamma_{\sqrt{3},p,q}$ and $\Gamma^*_{\sqrt{3},p,q}$ are congruent to each other
with respect to the symplectic group. In particular, $\widehat{\mathrm{K}}_{\sqrt{3},p,q}$ and $\widehat{\mathrm{K}}^*_{\sqrt{3},p,q}$ have the same Bennequin number. Taking into account that the Bennequin number is independent of the parameter $A$, provided that $A>1$, we obtain
\[
  \mathbf{b}(\Gamma^*_{A,p,q})=\mathbf{b}(\Gamma^*_{\sqrt{3},p,q})
    =\mathbf{b}(\Gamma_{\sqrt{3},p,q})=pq-p-q,
     \]
which proves the required result.
\end{proof}

\begin{remark}
The proposition above implies
as a consequence that the conformal equivalence class of a closed homogeneous timelike curve of type $C_{2i}$ is determined by a geometrical invariant, the total strain, and two topological invariants of its directrices, the Bennequin number and the linking number. In addition, the proposition shows that the directrices of the homogeneous knots of the class $C_{2i}$ are representatives of the contact isotopy classes of transversal torus knots of type $(p,q)$ with maximal Bennequin number \cite{Et,EtHo}.
\end{remark}

\bibliographystyle{amsalpha}

\begin{thebibliography}{AA}

%\bibitem{Al}
%{D. Alekseevsky},
%Problems of Conformal invariancy in vision,
%Workshop on Geometrical Models in Vision, Institut Henri Poincar\'e,
%Paris, October 22nd-24th, 2014; http://gmvision.lsis.org/slides/alekseevski.pdf

\bibitem{AD}
{C. Alvarez Paiva and C. E. Dur\'an}, Geometric invariants of fanning curves,
{\em Adv. in Appl. Math.}, {\bf 42} (13) (2009), 290-312.

\bibitem{Ba}
{T. F. Banchoff},
Osculating tubes and self-linking for curves on the three-sphere,
{\em Cont. Math.}, (2001).

\bibitem{BCDG}
{T. Barbot, V. Charette, T. Drumm, W. M. Goldman, and K. Melnick},
{A primier on the (2+1)-Einstein universe}, in
{\em Recent developments in pseudo-Riemannian geometry}, 179--229,
ESI Lect. Math. Phys., Eur. Math. Soc., Z\"urich, 2008; {arXiv:0706.3055[math.DG]}

\bibitem{Be}
{V. Bergmann},
Irreducible unitary representations of the Lorentz group,
{\em Ann. Math.} {\bf 48} (1947), 568--640.

%\bibitem{BE}
%{T. N. Biley and M. G. Eastwood},
%Conformal circles and parametrizations of curves in conformal manifolds,
%{\em Proc. AMS}, {\bf 108} (1) 1990, 215-221.

\bibitem{Blaschke}
{W. Blaschke},
{\em Vorlesungen \"uber Differentialgeometrie. III:
Differentialgeometrie der {K}reise und Kugeln},
%und geometrische Grundlagen von Einsteins Relativit\"atstheorie
%B. 3, bearbeitet von G. Thomsen,
Grundlehren der mathematischen Wissenschaften, 29, Springer, Berlin, 1929.

%\bibitem{Blaschke2}
%{W. Blaschke},
%Bemerkungen \"uber allgemeine Schraubenlinien,
%{\em Monatshefte f\"ur Mathematik und Physik} \textbf{19} (1908), 188--204;
%{\em Einf\"uhrung in die Differentiageometrie}, Springer-Verlag, 1950, p. 41.

%\bibitem{BC}
%F. E. Burstall and D. M. J. Calderbank,
%Conformal submanifold geometry I-III, arXiv:1006.5700 [math.DG].

\bibitem{Ca}
{G. C{\u{a}}lug{\u{a}}reanu},
L'int\'egral de Gauss et l'analyse des noeuds tridimensionnels,
{\em  Rev. Math. Pures Appl.} \textbf{4} (1959), 5--20.

%\bibitem{CB}
%{J. Campbell, M.A. Berger},
%Helicity, linking and write in a spherical geometry,
%{\em Journal of Physics: Conference Series} \textbf{544} (2014), 1--10.

\bibitem{Ce}
T.E. Cecil, {\em Lie sphere geometry{:} with applications to
submanifolds}, Springer-Verlag, New York, 1992.

%\bibitem{CC}
%T. E. Cecil, S.S. Chern, Tautness and Lie sphere geometry. {\em Math. Ann.}
%\textbf{278}, (1987), 381--399.

%\bibitem{CSW}
%{G. Cairns, R. W. Sharpe, and L. Webb},
%Conformal invariants for curves and surfaces in three dimensional space forms,
%\textit{Rocky Mountain J. Math.} \textbf{24} (1994), 933--954.

\bibitem{CW}
S.-S. Chern and H.-C. Wang,
Differential geometry in symplectic space I, {\em Sci. Rep. Nat. Tsing Hua Univ.}
\textbf{4} (1947), 453--477.

\bibitem{CGP-BAMS}
P. T. Chru\'sciel, G. J. Galloway, D. Pollack,
Mathematical general relativity: a sampler,
{\em Bull. Amer. Math. Soc. (N.S.)} \textbf{47}
(2010), 567--638.

%\bibitem{DG}
%{D. DeTurck, H. Gluck},
%The Gauss Linking integrals on the $3$-Sphere and in Hyhpebolic $3$-space,
%arXiv:math/0406276.

\bibitem{DNF}
{B. A. Dubrovin, A. T. Fomenko, and S. P. Novikov},
\textit{Modern geometry--methods and applications. Part I},
%The geometry of surfaces, transformation groups, and fields,
2nd edn., GTM 93. Springer-Verlag, New York, 1992.

%\bibitem{EB}
%{M. Eastwood and G. Mar\'i-Beffa},
%Geometric Poisson brackets on Grassmannians and conformal spheres,
%\textit{Proc. Roy. Soc. Edinburgh Sect. A} {\bf 142} (2012), no. 3, 525--561.

\bibitem{Einstein}
{A. Einstein},
Kosmologische Betrachtungen zur allgemeinen Relativit\"atstheorie,
{\em Sitzungsberichte der K\"oniglich Preussischen Akademie der Wissenschaften},
142--152, Berlin, 1917.

\bibitem{Et}
{J. B. Etnyre},
Transversal torus knots, {\em Geom. Topol.} {\bf 3} (1999), 253--268.

\bibitem{Et2}
{J. B. Etnyre},
{\em Legendrian and transveral knots},
Hanbook of knot theory, 105--185, Elsevier B. V., Amsterdam, 2005.

\bibitem{EtHo}
{J. B. Etnyre and K. Honda},
Knots and contact geometry I: torus knots and the figure eight knot,
{\em  J. Symplectic Geom.} {\bf 1} (2001), 63--120.

\bibitem{FGL}
A. Ferr\'andez, A. Gim\'enez, P. Lucas, Geometrical particle models
on 3D null curves, \textit{Phys. Lett. B} \textbf{543} (2002),
311--317; hep-th/0205284.

%\bibitem{F}
%{A. Fialkow},
%The conformal theory of curves,
%\textit{Trans. Amer. Math. Soc.} {\bf 51} (1942), 435--501.

\bibitem{Fr}
{C. Frances}, \textit{G\'eometrie et dynamique lorentziennes conformes}, Th\'ese, E.N.S. Lyon
(2002).

%\bibitem{FHW}
%{M. H. Freedman, Z.-X. He, and Z. Wang},
%M\"obius energy of knots and unknots,
%{\em Ann. of Math. (2)} {\bf 139} (1994), no. 1, 1--50.

\bibitem{FT}
{D. Fuchs, S. Tabachnikov},
Invariants of Legendrian and transverse knots in the standard contact space,
{\em Topology} {\bf 36} (1997), no. 5, 1025--1053.

\bibitem{Fu}
{F. B. Fuller},
The writhing number of a space curve,
{\em Proc. Nat. Acad. Sci. U.S.A.} {\bf 68} (1971), 815--819.

\bibitem{GM}
{J. D. Grant and E. Musso},
Coisotropic variational problems,
{\em J. Geom. Phys.} {\bf 50} (2004), 303--338.

\bibitem{Gr}
{P. A. Griffiths},
\textit{Exterior differential systems and the calculus of variations},
Progress in Mathematics, 25, Birkh\"auser, Boston, 1982.

%\bibitem{GS2}
%{V. Guillemin and S. Sternberg},
%\textit{Symplectic techniques in Physics},
%Cambridge University Press, Cambridge, 1990.

\bibitem{GS3}
{V. Guillemin and S. Sternberg},
\textit{Variations on a theme by Kepler},
AMS Colloquium Publications, \textbf{285}, Providence, RI, 1990.

%\bibitem{Ha}
%{J. Haantjes},
%Conformal differential geometry. Curves in conformal euclidean spaces,
%\textit{Proc. Nederl. Akad. Wetensch.} \textbf{44} (1941), 814--824.

\bibitem{HE}
{S. W. Hawking and G. F. R. Ellis},
\textit{The large scale structure of space-time},
Cambridge Monographs on Mathematical Physics, no. 1,
Cambridge University Press, London-New York, 1973.

\bibitem{KP}
Y. A. Kuznetsov, M. S. Plyushchay,
(2+1)-dimensional models of relativistic particles with curvature and torsion,
\textit{J. Math. Phys.} \textbf{35} (1994), no. 6, 2772-2778.

%\bibitem{LO}
%{R. Langevin and J. O'Hara},
%Conformally invariant energies of knots,
%\textit{J. Inst. Math. Jussieu} \textbf{4} (2005), no. 2, 219--280.

\bibitem{LO2010}
{R. Langevin and J. O'Hara},
Conformal arc-length as $\frac{1}{2}$-dimensional length of the set of osculating circles,
\textit{Comment. Math. Helv.} \textbf{85} (2010), no. 2, 273--312.

%\bibitem{LS}
%{R. Langevin and G. Solanes},
%The geometry of canal surfaces and the length of curves in de Sitter space,
%{\em Adv. Geom.} \textbf{11} (2011), no. 4, 58--601.

%\bibitem{L}
%{H. Liebmann},
%Beitr\"age zur Inversionsgeometrie der Kurven,
%M\"unchener Berichte, 1923.

\bibitem{MMR}
{M. Magliaro, L. Mari, and M. Rigoli},
 On the geometry of curves and conformal geodesics in the M\"obius space,
 \textit{Ann. Global Anal. Geom.} \textbf{40} (2011), 133-165.

%\bibitem{Ma}
%%{V. O. Manturov},
%\textit{Knot theory}, CRC Press, Boca Raton, 2004.

%\bibitem{B1}
%{G. Mar\'{\i}-Beffa},
%Relative and absolute differential invariants for conformal curves,
%\textit{J. Lie Theory} {\bf 13} (2003), no.1, 213--245.

%\bibitem{B2}
%{G. Mar\'{\i}-Beffa},
%Poisson brackets associated to the conformal geometry of curves,
%\textit{Trans. Amer. Math. Soc.} {\bf 357} (2005), 2799--2827.

%\bibitem{B3}
%{G. Mar\'{\i}-Beffa},
%Projective-type differential invariants and geometric curve evolutions
%of KdV-type in flat homogeneous manifolds,
%\textit{Ann. Inst. Fourier (Grenoble)} \textbf{58} (2008), 1295--1335.

\bibitem{Mar}
{C. M. Marle},
A property of conformally Hamiltonian vector fields; application to the Kepler problem,
arXiv:1011.5731v2[math.SG].

\bibitem{Mi}
{J. Milnor},
On the geometry of the Kepler problem,
\textit{Amer. Math. Monthly} \textbf{90} (1983), 353--365.

\bibitem{Mon}
{A. Montesinos Amilibia, M. C. Romero Fuster, and E. Sanabria},
Conformal curvatures of curves in $\R^{n+1}$,
\textit{Indag. Math. (N.S.)} \textbf{12} (2001), 369--382.

\bibitem{Mos}
{J. Moser},
Regularization of Kepler's problem and the averaging method on a manifold,
\textit{Comm. Pure Appl. Math.} \textbf{23} (1970), 609--636.

\bibitem{M1}
{E. Musso},
The conformal arclength functional,
\textit{Math. Nachr.} \textbf{165} (1994), 107--131.

\bibitem{M2}
{E. Musso},
Closed trajectories of the conformal arclength functional,
{\em Journal of Physics: Conference Series} \textbf{410} (2013), 012031.

\bibitem{MN1}
{E. Musso and L. Nicolodi},
Closed trajectories of a particle model on null curves in anti-de Sitter 3-space,
\textit{Classical Quantum Gravity} \textbf{24} (2007), no. 22, 5401--5411.

\bibitem{MN2}
{E. Musso and L. Nicolodi},
Reduction for constrained variational problems on 3-dimensional null curves,
\textit{SIAM J. Control Optim.} \textbf{47} (2008), no. 3, 1399-1414.

\bibitem{MN}
{E. Musso and L. Nicolodi},
Quantization of the conformal arclength functional on space curves,
\textit{Comm. Anal. Geom.} (to appear);
arXiv:1501.04101[math.DG].

\bibitem{NFS2}
{V. V. Nesterenko, A. Feoli, and G. Scarpetta},
Complete integrability for Lagrangians dependent on accelaration
in a spacetime of constant curvature,
\textit{Classical Quantum Gravity} \textbf{13} (1996), 1201--1211.

\bibitem{NMMK}
{A. Nersessian, R. Manvelyan, and H. J. W. M\"uller-Kirsten},
Particle with torsion on 3d null-curves,
\textit{Nuclear Phys. B} \textbf{88} (2000), 381--384; arXiv:hep-th/9912061

\bibitem{MR}
{H. K. Moffatt and R. L. Ricca},
Helicity and the C{\u{a}}lug{\u{a}}reanu invariant,
{\em Proc. Roy. Soc. London Ser. A} \textbf{439} (1992), no. 1906, 411--429.

\bibitem{NR}
{A. Nersessian and E. Ramos},
Massive spinning particles and the geometry of null curves,
\textit{Phys. Lett. B} \textbf{445} (1998), 123-–128.

\bibitem{Oh}
{J. O'Hara},
\textit{Energy of knots and conformal geometry},
Series on Knots and Everything, 33, World Scientific Publishing Co., Inc., River Edge, NJ, 2003.

%\bibitem{OS-Med-im}
%S. Ourselin and M. A. Styner (eds.),
%Medical Imaging 2014: Image Processing, Proceedings of SPIE, vol. 9034; doi: 10.1117/12.2052780

\bibitem{Pn1}
{R. Penrose},
\textit{Cycles of time: An extraordinary new view of the universe},
Alfred A. Knopf, Inc., New York, 2010.

\bibitem{Pn3}
{R. Penrose},
On the gravitization of quantum mechanics 2: Conformal cyclic cosmology,
\textit{Found. Phys.} \textbf{44} (2014), 873-890.

\bibitem{PR}
{R. Penrose and W. Rindler},
\textit{Spinors and space-time},
Camridge monographs on Mathematical Physics, Vol. I and II, Cambridge University Press, 1986.

\bibitem{P}
{R. D. Pisarski},
Field theory of paths with a curvature-dependent term,
\textit{Phys. Rev. D} \textbf{34} (1986), no. 2, 670--673.

\bibitem{Po}
{W. F. Pohl},
The self-linking number of a closed space curve,
\textit{J. Math. Mech.} \textbf{17} (1968), 975--985.

\bibitem{Raw}
{J. Rawnsley},
On the universal covering group of the real symplectic group,
\textit{Journal Geom, and Phys.} \textbf{62} (2012), 2044-2058.

\bibitem{RN}
{R. L. Ricca and B. Nipoti},
Gauss' linking number revisited,
{\em J. Knot Theory Ramifications} \textbf{20} (2011), no. 10, 1325--1343.

\bibitem{SS}
{C. Schiemangk and R. Sulanke},
Submanifolds of the M\"obius space,
\textit{Math. Nachr.} \textbf{96} (1980), 165--183.

\bibitem{Scho}
{M. Schottenloher},
A mathematical introduction to conformal field theory, 2nd edn., Lecture Notes in Physics, 759,
Springer-Verlag, Berlin, Heidelberg, 2008.

\bibitem{S}
{R. Sulanke},
Submanifolds of the M\"obius space II, Frenet formula and curves of constant curvatures,
\textit{Math. Nachr.} \textbf{100} (1981), 235--257.

%\bibitem{T}
%{T. Takasu},
%\textit{Differentialgeometrien in den Kugelr\"aumen. Band I.
%Konforme Differentialkugelgeometrie von Liouville und M\"obius},
%Maruzen, Tokyo, 1938.

%\bibitem{Thomsen}
%{G. Thomsen},
%\"Uber konforme Geometrie I: Grund\-lagen der konformen Fl\"a\-chen\-theorie,
%{\em Hamb. Math. Abh.} \textbf{3} (1923), 31--56.

\bibitem{Tod}
{P. Tod},
Penrose's Weyl curvature hypothesis and conformally-cyclic cosmology,
{\em Jornal of Physics: Conference Series} \textbf{229} (2010), 1--5.

\bibitem{Ur}
{H. Urbantke},
Local differential geometry of null curves in conformally flat space-time,
{\em J. Math. Phys.} \textbf{30} (10), (1989), 2238--2245.

%\bibitem{V}
%{E. Vessiot},
%Contribution \`a la g\'eom\'etrie conforme. Enveloppes de sph\`eres et courbes gauches,
%\textit{J. \'Ecole Polytechnique} \textbf{25} (1925), 43--91.

\bibitem{W}
{H. Weyl},
\textit{Time, Space, Matter}
Dover Publications, Inc., Mineola, N.Y., 1952.

%F.E. Burstall, U. Hertrich-Jeromin and W. Rossman, Lie geometry of linear
%Weingarten surfaces, C. R. Acad. %Sci. Paris, Ser. I 350 (2012) 413-416. DOI:

%F.E. Burstall, U. Hertrich-Jeromin, Harmonic maps in unfashionable geometries,
%manuscripta math. 108 %(2002) 171-189 MR1918585. DOI: 10.1007/s002290200269.

%F.E. Burstall, U. Hertrich-Jeromin and W. Rossman, Lie geometry of flat fronts
%in hyperbolic space, C. R. %Acad. Sci. Paris, Ser. I 348 (2010) 661-664.

%Tohoku Math. J. (2)
%Volume 52, Number 2 (2000), 199-233.
%Lie sphere geometry and integrable systems
%Eugene V. Ferapontov

%Tohoku Math. J. (2)
%Volume 58, Number 2 (2006), 161-187.
%Deformation and applicability of surfaces in Lie sphere geometry
%Emilio Musso and Lorenzo Nicolodi

%On the Cauchy problem for the integrable system of Lie -minimal surfaces.
%In: JOURNAL OF MATHEMATICAL %PHYSICS, vol. 46, pp. 3509-3523. - ISSN 0022-2488




\end{thebibliography}

\end{document}